\documentclass[12pt,a4paper]{amsart}

\usepackage[utf8]{inputenc}
\usepackage[american]{babel}
\usepackage{csquotes}
\usepackage{mathpazo}
\usepackage{amsmath,amssymb,amsthm}
\usepackage{enumerate}
\usepackage{microtype}
\usepackage[left=2.5cm,right=2.5cm,top=3cm,bottom=3cm]{geometry}
\usepackage[style=alphabetic,sorting=anyt,backend=biber,doi=false,url=false,maxbibnames=10,isbn=false]{biblatex}
\addbibresource{mybib.bib}
\usepackage{color}
\usepackage[hidelinks]{hyperref}
\usepackage{cleveref}

\renewcommand{\AA}{\mathfrak A}
\newcommand{\A}{\mathcal{A}}

\newcommand{\IC}{\mathbb{C}}

\newcommand{\IN}{\mathbb{N}}
\newcommand{\IR}{\mathbb{R}}
\renewcommand{\H}{\mathcal{H}}
\newcommand{\HH}{\mathfrak H}
\newcommand{\J}{\mathcal{J}}
\newcommand{\K}{\mathcal K}
\newcommand{\M}{\mathcal{M}}
\newcommand{\N}{\mathcal{N}}
\newcommand{\U}{\mathcal U}

\newcommand{\E}{\mathcal{E}}
\newcommand{\F}{\mathcal{F}}
\renewcommand{\L}{\mathcal{L}}

\newcommand{\norm}[1]{\lVert#1\rVert}
\newcommand{\abs}[1]{\lvert#1\rvert}
\newcommand{\id}{\mathrm{id}}

\newcommand{\tr}{\operatorname{tr}}

\newcommand{\m}{\mathfrak{m}}
\newcommand{\n}{\mathfrak{n}}
\renewcommand{\epsilon}{\varepsilon}
\renewcommand{\phi}{\varphi}
\renewcommand{\Im}{\mathrm{Im}}
\renewcommand{\Re}{\mathrm{Re}}
\newcommand{\iprod}[2]{\langle\,#1\,,#2\,\rangle}
\newcommand{\IZ}{\mathbb Z}
\newcommand{\op}{\mathrm{op}}

\newtheorem{proposition}{Proposition}[section]
\newtheorem{lemma}[proposition]{Lemma}
\newtheorem{corollary}[proposition]{Corollary}
\newtheorem{theorem}[proposition]{Theorem}

\theoremstyle{remark}
\newtheorem{remark}[proposition]{Remark}
\newtheorem{example}[proposition]{Example}

\theoremstyle{definition}
\newtheorem{definition}[proposition]{Definition}

\title[Differential structure of QMS]{The differential structure of generators of GNS-symmetric quantum Markov semigroups}
\author{Melchior Wirth}
\address{Institute of Science and Technology Austria (ISTA), Am Campus 1, 3400 Kloster\-neuburg, Austria}
\email{melchior.wirth@ist.ac.at}

\begin{document}
\begin{abstract}
We show that the generator of a GNS-symmetric quantum Markov semigroup can be written as the square of a derivation. This generalizes a result of Cipriani and Sauvageot for tracially symmetric semigroups. Compared to the tracially symmetric case, the derivations in the general case satisfy a twisted product rule, reflecting the non-triviality of their modular group. This twist is captured by the new concept of Tomita bimodules we introduce. If the quantum Markov semigroup satisfies a certain additional regularity condition, the associated Tomita bimodule can be realized inside the $L^2$ space of a bigger von Neumann algebra, whose construction is an operator-valued version of free Araki-Woods factors.
\end{abstract}

\maketitle


\section{Introduction}

Quantum Markov semigroups, originally introduced in the study of the time evolution of open quantum systems \cite{Ali76,GKS76,Lin76}, have long since permeated the boundaries of mathematical physics and become an object of interest across various disciplines of (noncommutative) mathematics such as noncommutative harmonic analysis \cite{JX07,JMP14}, noncommutative probability, noncommutative geometry \cite{Sau96} and the structure theory of von Neumann algebras \cite{CS15,CS17}.

One of the central questions from the very beginning was to characterize the generators of quantum Markov semigroups. For quantum Markov semigroups on matrix algebras, the generators are characterized by the theorems of Lindblad \cite{Lin76} and Gorini--Kossakowski--Sudarshan \cite{GKS76}, while the generators of uniformly continuous quantum Markov semigroups on arbitrary von Neumann algebras have been described by Christensen--Evans \cite{CE79}. While quantum Markov semigroups on matrix algebras are of interest in quantum information theory, many examples of classical and quantum Markov semigroups act on infinite-dimensional algebras and are not uniformly continuous, that is, they have unbounded generators.

To facilitate the study of quantum Markov semigroups with unbounded operators, one often focuses on semigroups that satisfy a certain symmetry condition so that one can apply techniques for self-adjoint operators on Hilbert spaces. The simplest such symmetry condition is tracial symmetry, where the reference state is a trace. The study of tracially symmetric quantum Markov semigroups and the associated quadratic forms, completely Dirichlet forms, was initiated by Alberverio--H{\o}egh-Krohn \cite{AHK77} and further pursued by Davies--Lindsay \cite{DL92,DL93}.

In this setting, Cipriani and Sauvageot \cite{CS03} proved that the $L_2$-generator $\L^{(2)}$ can be written as $\L^{(2)}=\delta^\ast\bar\delta$, where $\delta$ is a derivation with values in a Hilbert bimodule. This result spawned a lot of applications across various disciplines, from analysis on fractals \cite{HT13} and metric graphs \cite{BK19} over noncommutative geometry \cite{CS03a}, noncommutative probability \cite{Dab10,JZ15}, quantum optimal transport \cite{Wir20,WZ21a} to the structure theory of von Neumann algebras and in particular Popa's deformation and rigidity theory \cite{Pet09,DI16,Cas21,CIW21}.

Despite this resounding success, the condition of tracial symmetry is too strong in various situations. For example, type III von Neumann algebras do not even admit a faithful normal trace. Quantum Markov semigroups on type III von Neumann algebras occur naturally for example in free probability and the study of compact quantum groups (see Subsections \ref{subsec:Araki-Woods}, \ref{subsec:CQG} for more details). But even on type I and type II von Neumann algebras there are many quantum Markov semigroups for which the natural reference state is not a trace, for example in mathematical physics when modelling of open quantum systems at finite temperature (see below). For this reason the problem of extending Cipriani and Sauvageot's result to a larger class of quantum Markov semigroups has been raised (see \cite{SV19,Cas21} for example).

In this article we prove a representation of the generator as square of a derivation in the spirit of Cipriani and Sauvageot for the class of GNS-symmetric quantum Markov semigroups on arbitrary von Neumann algebras.

The GNS symmetry (or detailed balance) condition for quantum Markov semigroups also comes with a clear motivation from physics -- it describes the time evolution of open quantum systems coupled to an environment in thermal equilibrium \cite{Ali76}. Tracial symmetry is a special case of GNS symmetry, which corresponds to an equilibrium at infinite temperature in this picture. But GNS-symmetric quantum Markov semigroups also occur naturally across other fields of mathematics, for example the free Ornstein--Uhlenbeck semigroup of free Araki--Woods factors or quantum Markov semigroups associated with Lévy processes on compact quantum groups that are not of Kac type (again, see Subsection \ref{subsec:Araki-Woods}, \ref{subsec:CQG} for details).

On matrix algebras, the structure of the generators of GNS-symmetric quantum Markov semigroups is well-understood \cite{Ali76} and can be phrased in terms of derivations \cite{CM17,CM20}. This has given rise to applications to $L_p$-regularity of quantum Dirichlet forms \cite{Bar17}, quantum optimal transport \cite{CM17,MM17,CM20} and logarithmic Sobolev inequalities \cite{GR21}. Especially the first two results also offer interesting possible applications of the results of this article. Recently, also the generators of uniformly continuous GNS-symmetric quantum Markov semigroups have been characterized by the author \cite{Wir22}.

These results already hint at a certain twist needed to represent the generators of GNS-symmetric quantum Markov semigroups as squares of derivations. In fact, it has been shown recently by Vernooij \cite{Ver22} that there exist GNS-symmetric quantum Markov semigroups on matrix algebras that cannot be written as $\L^{(2)}=\delta^\ast\delta$ for a derivation $\delta$ with values in a Hilbert bimodule. Hence, to extend Cipriani and Sauvageot's result to GNS-symmetric quantum Markov semigroups, new notions are required.

The characterizing property of traces on von Neumann algebras is that they have trivial modular group. To handle the case when the reference weight is not a trace, the interaction with the modular group gives rise to new features. In particular, a GNS-symmetric quantum Markov semigroups commutes with the modular group, a fact which we systematically exploit. It implies that the bimodule we construct comes with certain extra structure resembling the modular group and conjugation on a Tomita algebra, which we capture in our definition of Tomita bimodules. While the analog of the conjugation operator is also present in the tracially symmetric case, the analog of the modular group is a new phenomenon in the non-tracial case.

Another way to phrase that the modular group of a trace is trivial is to say that the left and right Hilbert algebra associated with it coincide. The twist that appears only in the non-tracial case is that the left action on the bimodule we construct is a $\ast$-homomorphism with respect to the involution from the left Hilbert algebra, while the right action is a $\ast$-anti-homomorphism with respect to the involution from the right Hilbert algebra. 

Let us now describe the contents of this article in more detail. In \Cref{sec:prelim} we review some basic von Neumann algebra theory, in particular Tomita--Takesaki theory and modules over von Neumann algebras. In Section \Cref{sec:QMS} we recall some known facts on quantum Markov semigroups and quantum Dirichlet forms and characterize conservativeness in terms of the Dirichlet form (\Cref{prop:char_Dir_form_conservative}).

In \Cref{sec:Tomita_bimodule} we introduce the new notion of Tomita bimodules (\Cref{def:Tomita_bimodule}), which serve as the codomains of the derivations we associate with GNS-symmetric quantum Markov semigroups later. Using a Fock space construction, we show that a Tomita bimodule $\H$ over the Tomita algebra $\AA$ can be embedded into the Hilbert completion of a Tomita algebra $\mathfrak B\supset \AA$, provided the actions of $\AA$ are normal in a suitable sense (Subsection \ref{subsec:Fock_construction}).

In \Cref{sec:sym_deriv} we consider derivations on Tomita algebras with values in Tomita bimodules that respect the modular automorphism group and conjugation operator (\Cref{def:sym_deriv}). We show that if such a derivation $\delta$ is inner or more generally can be approximated by inner derivations in a suitable sense, then $\delta^\ast\bar\delta$ generates a GNS-symmetric quantum Markov semigroup (\Cref{prop:from_bdd_deriv_to_QMS}, \Cref{thm:from_deriv_to_QMS}).

In \Cref{sec:from_QMS_to_deriv} we study the reverse construction going from a quantum Markov semigroup with $L_2$-generator $\L^{(2)}$ to a derivation such that $\delta^\ast\bar\delta=\L^{(2)}$. We first show how the results from \cite{Wir22}, upgraded to cover the case of a reference weight instead of a state, can be used to prove the existence in the case of a bounded generator (\Cref{prop:deriv_bdd}).

To deal with unbounded generators, one problem is that of an appropriate domain for the associated derivation. For the product rule to make sense, $\delta$ needs to be defined on an algebra. In the tracial setting, a suitable algebra was identified by Davies and Lindsay \cite{DL92}. Here we show that for a GNS-symmetric quantum Markov semigroup with associated quantum Dirichlet form $\E$, the set
\begin{equation*}
\AA_\E=\{a\in \AA_\phi\mid U_z a\in D(\E)\text{ for all }z\in\IC\}
\end{equation*}
is a Tomita algebra and a core for $\E$ (\Cref{thm:Tomita_algebra_QMS}). Here $\AA_\phi$ denotes the maximal Tomita algebra associated with the weight $\phi$.

We then prove the main result of this article, namely the existence a Tomita bimodule $\H$ over $\AA_\E$ and a derivation $\delta\colon\AA_\E\to\H$ such that the $L_2$-generator satisfies $\L^{(2)}=\delta^\ast\bar\delta$ (\Cref{thm:from_QMS_to_deriv}). Further we show that under natural assumptions, the Tomita bimodule $\H$ and the derivation $\delta$ are unique (\Cref{thm:deriv_unique}).

A desirable property of Tomita bimodules is that the left and right action are normal, which is used for example in our Fock space construction. Even if the underlying von Neumann algebra is commutative, it is known that this is not necessarily the case for the Tomita bimodule associated with a quantum Markov semigroup. In \Cref{sec:carre_du_champ} we give a characterization of GNS-symmetric quantum Markov semigroups that give rise to normal Tomita bimodules that runs parallel to the tracially symmetric case (\Cref{thm:char_normal_bimodule}). In this case, the quantum Markov semigroups induces a structure analog to the derivation triples from \cite{JRS} in the tracially symmetric case (\Cref{cor:deriv_triple}).

Finally, in \Cref{sec:examples} we describe the derivation associated with a GNS-symmetric quantum Markov semigroups in concrete examples. First we show how tracially symmetric quantum Markov semigroups and GNS-symmetric quantum Markov semigroups on finite-dimensional von Neumann algebras fit into the picture (Subsections \ref{subsec:trace_sym_QMS}, \ref{subsec:fin_dim_QMS}). Then we discuss some genuinely new examples with depolarizing semigroups, the Ornstein--Uhlenbeck semigroup on free Araki--Woods factors and translation-invariant quantum Markov semigroups on compact quantum groups not of Kac type (Subsections \ref{subsec:depolarizing_QMS}--\ref{subsec:CQG}).

\subsection*{Acknowledgments} The author wants to thank Martijn Caspers, Matthijs Vernooij and Jan Maas for fruitful discussions on the topic. He is particularly grateful to Matthijs Vernooij and Haonan Zhang for a number of helpful comments on a preliminary version of this article. This research was partially funded by the Austrian Science Fund (FWF) under the Esprit Programme [ESP 156] and the European Research Council (ERC) under the European Union’s Horizon 2020 research and innovation programme (grant agreement No 716117). For the purpose of Open Access, the author has applied a CC BY public copyright licence to any Author Accepted Manuscript (AAM) version arising from this submission.


\section{Preliminaries}\label{sec:prelim}

In this section we briefly recall some definitions and results on Neumann algebras, in particular on weights, Hilbert and Tomita algebras and correspondences, and we fix the notation for the rest of the article. For a more in-depth account of the material presented here, which is mostly standard, see \cite{Tak03}. A detailed account of $C^\ast$- and $W^\ast$-modules can be found in \cite{Ske01}.

To start with, let us fix some general notation. All vector spaces are over the complex numbers, unless explicitly stated otherwise, and (semi-) inner products are linear in the second argument. If $V$ and $W$ are vector spaces, then $V\odot W$ denotes their algebraic tensor product, while $\otimes$ is used for the tensor product of Hilbert spaces. Some other tensor products will be discussed below.

\subsection{Weights and semi-cyclic representations}

Let $\M$ a von Neumann algebra. A \emph{weight} on $\M$ is a map $\phi\colon \M_+\to[0,\infty]$ such that $\phi(\lambda x)=\lambda\phi(x)$ and $\phi(x+y)=\phi(x)+\phi(y)$ for all $x,y\in\M_+$ and $\lambda\geq 0$. Here we use the convention $0\cdot\infty=0$.

For a weight $\phi$ we define
\begin{align*}
\mathfrak p_\phi&=\{x\in\M_+\mid \phi(x)<\infty\},\\
\n_\phi&=\{x\in \M\mid x^\ast x\in \mathfrak p_\phi\},\\
\m_\phi&=\operatorname{lin}\{x^\ast y\mid x,y\in\n_\phi\}.
\end{align*}
The weight $\phi$ is called \emph{normal} if $\sup_j \phi(x_j)=\phi(\sup_j x_j)$ for every bounded increasing net $(x_j)$ in $\M_+$, \emph{semi-finite} if $\mathfrak p_\phi$ generates $\M$ as a von Neumann algebra, and \emph{faithful} if $\phi(x^\ast x)=0$ implies $x=0$.

Every element of $\m_\phi$ is a linear combination of four elements of $\mathfrak p_\phi$, and $\phi$ can be linearly extended to $\m_\phi$. This extension will still be denoted by $\phi$.

A semi-cyclic representation of $\M$ is a triple $(\pi,H,\Lambda)$ consisting of a normal representation of $\M$ on $H$ and a $\sigma$-strong$^\ast$ closed linear map $\Lambda$ from a dense left ideal $\n$ of $\M$ into $H$ with dense range such that
\begin{align*}
\pi(x)\Lambda(y)&=\Lambda(xy)
\end{align*}
for all $x\in \M$ and $y\in\n$.

There does not seem to be an established term for $\n$ in this situation. We call it the \emph{definition ideal} of the semi-cyclic representation $(\pi,H,\Lambda)$.

Given a normal semi-finite weight $\phi$ on $\M$, the associated semi-cyclic representation $(\pi_\phi,L_2(\M,\phi),\Lambda_\phi)$ is defined as follows: $L_2(\M,\phi)$ is the Hilbert space obtained from $\n_\phi$ after separation and completion with respect to the inner product 
\begin{equation*}
\langle\cdot,\cdot\rangle_\phi\colon\n_\phi\times\n_\phi\to\IC,\,(x,y)\mapsto\phi(x^\ast y),
\end{equation*}
$\Lambda_\phi\colon \n_\phi\to L_2(\M,\phi)$ is the quotient map and $\pi_\phi$ is given by $\pi_\phi(x)\Lambda_\phi(y)=\Lambda_\phi(xy)$. We also use the alternative notation $x\phi^{1/2}$ for $\Lambda_\phi(x)$.

This semi-cyclic representation is essentially uniquely determined by $\phi$ in the following sense: If $(\pi,H,\Lambda)$ is another semi-cyclic representation of $\phi$ with definition ideal $\n_\phi$ and $\langle \Lambda(x),\Lambda(y)\rangle=\phi(x^\ast y)$ for all $x,y\in\n_\phi$, then there exists a unitary $U\colon L_2(\M,\phi)\to H$ such that $U\Lambda_\phi=\Lambda$ and $U\pi_\phi(x)U^\ast=\pi(x)$ for all $x\in\M$.

\subsection{Hilbert algebras and Tomita algebras}

An algebra $\AA$ with involution $^\sharp$ (resp. $^\flat$) and inner product $\iprod{\cdot}{\cdot}$ is called \emph{left (resp. right) Hilbert algebra} if
\begin{itemize}
\item for every $a\in\AA$ the map $\pi_l(a)\colon\AA\to \AA$, $b\mapsto ab$ (resp. $b\mapsto ba$) is bounded with respect to $\iprod{\cdot}{\cdot}$,
\item $\langle ab,c\rangle=\langle b,a^\sharp c\rangle$ (resp. $\langle ab,c\rangle=\langle b,ca^\flat\rangle$) for all $a,b,c\in\AA$,
\item the involution $^\sharp$ (resp. $^\flat$) is closable,
\item the linear span of all products $ab$ with $a,b\in\AA$ is dense in $\AA$ with respect to $\iprod{\cdot}{\cdot}$.
\end{itemize}
Let $\M$ be a von Neumann algebra and $\phi$ a normal semi-finite faithful weight on $\M$. The prototypical example of a left Hilbert algebra is $\AA=\Lambda_\phi(\n_\phi\cap\n_\phi^\ast)$ with the product $\Lambda_\phi(x)\Lambda_\phi(y)=\Lambda_\phi(xy)$, the involution $\Lambda_\phi(x)^\sharp=\Lambda_\phi(x^\ast)$ and the inner product inherited from $L_2(\M,\phi)$, that is, $\iprod{\Lambda_\phi(x)}{\Lambda_\phi(y)}=\phi(x^\ast y)$. In this case, $\pi_l(\AA)^{\prime\prime}=\pi_\phi(\M)$.

Conversely, every left Hilbert algebra $\AA$ gives rise to a von Neumann algebra $\pi_l(\AA)^{\prime\prime}$ acting on the completion of $\AA$ and a weight 
\begin{equation*}
\phi\colon \pi_l(\AA)^{\prime\prime}_+\to [0,\infty],\,\phi(x)=\begin{cases}\norm{\xi}^2&\text{if }x^{1/2}=\pi_l(\xi),\\\infty&\text{otherwise}.\end{cases}
\end{equation*}
If $\AA$ is a full left Hilbert algebra \cite[Definition VI.1.16]{Tak03}, then $\phi$ is a normal semi-finite faithful weight on $\pi_l(\AA)^{\prime\prime}$.

Let $\HH$ be the completion of the Hilbert algebra $\AA$. Since the involution $^\sharp$ on $\AA$ is closable, its closure $S$ on $\HH$ exists and has a polar decomposition $S=J\Delta^{1/2}$. The operator $\Delta$ is a non-singular positive self-adjoint operator, called the \emph{modular operator}, and $J$ is an anti-unitary involution, called the \emph{modular conjugation}. If $\AA$ is the left Hilbert algebra associated with a weight $\phi$, we write $\Delta_\phi$ and $J_\phi$ for the associated modular operator and modular conjugation.

We write $\Lambda_\phi^\prime\colon \n_\phi^\ast\to L_2(\M,\phi)$ for the map $x\mapsto J_\phi\Lambda_\phi(x^\ast)$. We also use the notation $\phi^{1/2} x$ instead of $\Lambda_\phi^{\prime}(x)$.

If $\AA$ is full, the modular conjugation $J$ gives rise to the positive self-dual cone $P=\overline{\{\pi_l(a)Ja\mid a\in\AA\}}$ and $\pi_l(\AA)^{\prime\prime}$ is in standard form \cite[Definition IX.1.13]{Tak03}.

The modular operator $\Delta$ gives rise to a pointwise $\sigma$-weakly continuous group of automorphisms $x\mapsto \Delta^{it}x\Delta^{-it}$ on $\pi_l(\AA)^{\prime\prime}$. If $\phi$ is a normal semi-finite faithful weight on $\M$, the group $\sigma^\phi$ given by $\sigma^\phi_t(x)=\pi_\phi^{-1}(\Delta_\phi^{it}\pi_\phi(x)\Delta_\phi^{-it})$ is called the \emph{modular group} associated with $\phi$.

If $(\alpha_t)_{t\in\IR}$ is a pointwise weak$^\ast$ continuous group of $\ast$-automorphisms on $\M$, then an element $x\in \M$ is called \emph{entire analytic} if the map $t\mapsto \alpha_t(x)$ has an extension $z\mapsto \alpha_z(x)$ to the complex plane such that $z\mapsto\omega(\alpha_z(x))$ is analytic for every $\omega\in\M_\ast$. The entire analytic elements form a weak$^\ast$ dense $\ast$-subalgebra of $\M$.

A \emph{Tomita algebra} is a left Hilbert algebra $\AA$ endowed with a complex one-parameter group $(U_z)_{z\in\IC}$ of algebra automorphism such that
\begin{itemize}
\item $z\mapsto \langle a,U_z b\rangle$ is analytic for all $a,b\in\AA$,
\item $(U_z a)^\sharp=U_{\bar z}(a^\sharp)$ for all $a\in\AA$, $z\in\IC$,
\item $\langle U_z a,b\rangle=\langle a,U_{-\bar z}b\rangle$ for all $a,b\in\AA$, $z\in \IC$,
\item $\langle a^\sharp,b^\sharp\rangle=\langle U_{-i}b,a\rangle$ for all $a,b\in \AA$.
\end{itemize}
Note that every Tomita algebra becomes a right Hilbert algebra when endowed with the involution
\begin{equation*}
\AA\to\AA,a\mapsto a^\flat=U_{-i}(a^\sharp).
\end{equation*}
For a full left Hilbert algebra $\AA$ let
\begin{equation*}
\AA_0=\left\{\xi\in\bigcap_{n\in\IZ}D(\Delta^n)\,\bigg\vert\, \Delta^n\xi\in\AA\text{ for all }n\in\IZ\right\}.
\end{equation*}
For every $\xi\in\AA_0$ the map $t\mapsto \Delta^{it}\xi$ has an entire analytic extension $z\mapsto U_z\xi$ with $U_z\xi\in \AA_0$ for all $z\in\IC$. This makes $\AA_0$ into a Tomita algebra such that $\pi_l(\AA_0)^{\prime\prime}=\pi_l(\AA)^{\prime\prime}$. If $\AA$ is the left Hilbert algebra associated with a weight $\phi$, we write $\AA_\phi$ for $\AA_0$ and call it the \emph{Tomita algebra associated with $\phi$}.

\subsection{Correspondences and relative tensor product}

Let $\M$, $\N$ be von Neumann algebras. An $\M$-$\N$-correspondence is a Hilbert space $\H$ endowed with commuting normal representations of $\M$ and $\N^\op$. We write $x\xi$ and $\xi y$ for the left and right action, respectively. If $\phi$ is a normal semi-finite weight on $\M$, then $L_2(\M,\phi)$ is an $\M$-$\M$-correspondence with the usual left action and $\xi\cdot y=J_\phi y^\ast J_\phi\xi$.

If $\psi$ is a normal semi-finite faithful weight on $\N$, a vector $\xi\in \H$ is called \emph{left-bounded} (with respect to $\psi$) if there exists a constant $C>0$ such that $\norm{\xi y}\leq C\norm{\psi^{1/2}y}$ for all $y\in \N$. The notation $\psi^{1/2}y$ was explained in the previous subsection. We write $L_\infty(\H_\M,\psi)$ for the set of all left-bounded vectors in $\H$. If $\xi\in L_\infty(\H_\M,\psi)$, then $\phi^{1/2}y\mapsto \xi y$ extends to a bounded linear operator from $L_2(\N,\psi)$ to $\H$, which we denote by $L_\psi(\xi)$.

There is also the dual notion of right-bounded vectors. A vector $\xi\in \H$ is called \emph{right-bounded} (with respect to $\phi$) if there exists a constant $C>0$ such that $\norm{x\xi}\leq C\norm{x\phi^{1/2}}$ for all $x\in \M$. We write $L_\infty(_\M\H,\phi)$ for the set of all right-bounded vectors in $\H$. The bounded operator from $L_2(\M,\phi)$ to $\H$ that extends $x\phi^{1/2}\mapsto x\xi$ is denoted by $R_\phi(\xi)$.

A vector in $\H$ is called \emph{bounded} if it is both left- and right-bounded. We write $L_\infty(_\M\H_\N,(\phi,\psi))$ for the set of all bounded vectors or simply $L_\infty(_\M\H_\M,\phi)$ if $\M=\N$ and $\phi=\psi$.

If $\H$, $\K$ are $\M$-$\N$-correspondences, we write $\L(\H_\N,\K_\N)$ for the set of all bounded right module maps from $\H$ to $\K$. Similarly, $\L(_\M\H,_\M\K)$ denotes the set of all bounded left module maps from $\H$ to $\K$. Clearly $L_\psi(\xi)\in \L(L_2(\N,\psi)_\N,\H_\N)$ for every left-bounded vector $\xi\in \H$ and $R_\psi(\xi)\in \L(_\M L_2(\M,\phi),_\M\H)$ for every right-bounded vector $\xi\in \H$. Since $\M$ and $\N$ are in standard form on $L_2(\M,\phi)$ and $L_2(\N,\psi)$, respectively, we have 
\begin{align*}
\L(L_2(\N,\psi)_\N,L_2(\N,\psi)_\N)&=\N\\
\L(_\M L_2(\M,\phi),_\M L_2(\M,\phi))&=\M^\prime.
\end{align*}
Thus $L_\psi(\xi)^\ast L_\psi(\xi)\in \N$ for every left-bounded vector $\xi\in \H$ and $R_\phi(\xi)^\ast R_\phi(\xi)\in \M^\prime$ for every right-bounded vector $\xi\in \H$. We also write $(\xi|\eta)$ for $L_\phi(\xi)^\ast L_\phi(\eta)$ to stress the connection to Hilbert bimodules.

If $\M$, $\N$ and $\mathcal R$ are von Neumann algebras, $\H$ is an $\M$-$\N$-correspondence, $\K$ is an $\N$-$\mathcal R$-correspondence and $\psi$ is a normal semi-finite faithful weight on $\N$, the \emph{relative tensor product} $\H\otimes_\psi\K$ is the Hilbert space obtained from the algebraic tensor product $L_\infty(\H_\N,\psi)\odot \K$ after separation and completion with respect to the semi-inner product
\begin{equation*}
\langle \xi_1\otimes \eta_1,\xi_2\otimes \eta_2\rangle=\langle \eta_1,L_\psi(\xi_1)^\ast L_\psi(\xi_2)\cdot\eta_2\rangle.
\end{equation*}
This expression makes sense since $L_\psi(\xi_1)^\ast L_\psi(\xi_2)\in \N$. The image of $\xi\otimes\eta$ in $\H\otimes_\psi\K$ is denoted by $\xi\otimes_\psi \eta$.

On $L_\infty(\H_\N,\psi)\odot L_\infty(_\N \K,\psi)$ one has
\begin{equation*}
\langle \eta_1,L_\psi(\xi_1)^\ast L_\psi(\xi_2)\eta_2\rangle=\langle \xi_1,\xi_2\cdot J_\psi R_\psi(\eta_2)^\ast R_\psi(\eta_1)J_\psi\rangle.
\end{equation*}
Hence $\H\otimes_\psi\K$ can equivalently be defined as the Hilbert space obtained from $\H\odot L_\infty(_\N\H,\psi)$ after separation and completion with respect to the semi-inner product
\begin{equation*}
\langle \xi_1\otimes \eta_1,\xi_2\otimes \eta_2\rangle=\langle \xi_1,\xi_2\cdot J_\psi R_\psi(\eta_2)^\ast R_\psi(\eta_1)J_\psi\rangle.
\end{equation*}

\subsection{\texorpdfstring{$C^\ast$-modules and von Neumann modules}{C*-modules and von Neumann modules}}

Let $A$ be a unital $C^\ast$-algebra. A pre-$C^\ast$-module over $A$ is a right $A$-module $F$ with a sesquilinear map $(\cdot|\cdot)\colon F\times F\to A$ such that
\begin{itemize}
\item $(\xi|\eta)x=(\xi|\eta x)$ for all $\xi,\eta\in F$, $x\in A$,
\item $(\xi|\xi)\geq 0$ for all $\xi\in F$,
\item $(\xi|\xi)=0$ if and only if $\xi=0$.
\end{itemize}
A $C^\ast$-module is a pre-$C^\ast$-module that is complete in the norm $\norm{(\cdot|\cdot)}^{1/2}$. 

A bounded linear operator $T$ on a $C^\ast$-module $F$ is called \emph{adjointable} if there exists a bounded linear operator $T^\ast$ on $F$ such that
\begin{equation*}
(T\xi|\eta)=(\xi|T^\ast\eta)
\end{equation*}
for all $\xi,\eta\in F$. Note that all adjointable operators are right module maps, that is, $T(\xi a)=(T\xi)a$ for all $a\in A$, $\xi\in F$.

Let $A$, $B$ and $C$ be unital $C^\ast$-algebras. A \emph{$C^\ast$ $A$-$B$-module} is a $C^\ast$-module over $B$ together with an action of $A$ by adjointable operators. In particular, a $C^\ast$ $\IC$-$A$-bimodule is the same as a $C^\ast$ $A$-module and a $C^\ast$ $A$-$\IC$-bimodule the same as a representation of $A$ on a Hilbert space. In the case $A=B$ we simply speak of $C^\ast$ $A$-bimodules.

The \emph{tensor product} $F\bar\odot G$ of a $C^\ast$ $A$-$B$-module $F$ and a $C^\ast$ $B$-$C$-module is the $C^\ast$ $A$-$C$-module obtained from the algebraic tensor product $F\odot G$ after separation and completion with respect to the $C$-valued inner product given by
\begin{equation*}
(\xi\otimes\eta|\xi'\otimes\eta')=(\eta|(\xi|\xi')\eta')
\end{equation*}
and the actions given by
\begin{equation*}
a(\xi\otimes\eta)=a\xi\otimes \eta,\,(\xi\otimes\eta)c=\xi\otimes\eta c.
\end{equation*}
If $A$ is a $C^\ast$-algebra of bounded operators on the Hilbert space $H$ and $F$ is a $C^\ast$ $A$-module, we can embed $F$ into $B(H,F\bar\odot H)$ by the action 
\begin{equation*}
H\to F\bar\odot H,\,\zeta\mapsto \xi\otimes\zeta
\end{equation*}
for $\xi\in F$. If we refer to the strong topology on a $C^\ast$-module in the following, we always mean the strong topology in this embedding.

Let $\M$ be a von Neumann algebra on $H$. A \emph{von Neumann $\M$-module} is a $C^\ast$ $\M$-module $F$ that is strongly closed in $B(H,F\bar\odot H)$. Several equivalent definitions of von Neumann modules have been given in \cite[Proposition 2.9]{Sch02}. In particular, a $C^\ast$-module over a von Neumann algebra is a von Neumann module if and only if it is isometrically isomorphic to a dual space and the right action is weak$^\ast$ continuous. The adjointable operators on a von Neumann $\M$-module form a von Neumann algebra $\L_\M(F)$.

If $\N$ is another von Neumann algebra on $K$, then a $C^\ast$ $\M$-$\N$-module is a \emph{von Neumann $\M$-$\N$-module} if it is a von Neumann $\N$-module and the left action of $\M$ on $F\bar\odot K$ is normal.

\section{Quantum Markov semigroups and quantum Dirichlet forms}\label{sec:QMS}

In this section we review some basic material on quantum Markov semigroups and the GNS symmetry condition, before we give a new characterization of conservativeness in terms of the associated quadratic forms on $L_2$ (\Cref{prop:char_Dir_form_conservative}) and discuss some density properties.

Let $\M$ be a von Neumann algebra. A \emph{quantum dynamical semigroup} on $\M$ is a pointwise $\sigma$-weakly continuous semigroup $(P_t)$ of normal contractive completely positive operators on $\M$. If $P_t$ is unital for every $t\geq 0$, we call $(P_t)$ a \emph{quantum Markov semigroup}.

Note that these definitions are not universal -- some authors use the term ``quantum Markov semigroup'' for the objects we call quantum dynamical semigroups and ``conservative quantum Markov semigroup'' for our quantum Markov semigroups. The focus of this article is exclusively on quantum Markov semigroups (in our sense), we only introduce quantum dynamical semigroups to relate some results from the literature that are formulated in the non-unital case.

The continuity requirement in the definition of quantum dynamical semigroups implies pointwise continuity for any of the ``reasonable'' locally convex operator topologies weaker than the norm topology. This is well-known among experts, but since we did not find a reference, we give the simple proof here.

\begin{lemma}\label{lem:QMS_strong_conv}
Let $\M$ be a von Neumann algebra. If $(P_t)$ is a quantum dynamical semigroup, then $P_t (x)\to x$ in the $\sigma$-strong$^\ast$ topology as $t\searrow 0$ for every $x\in \M$.
\end{lemma}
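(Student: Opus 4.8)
The statement asks us to upgrade $\sigma$-weak continuity of $t\mapsto P_t(x)$ at $t=0$ to $\sigma$-strong$^\ast$ continuity. The natural strategy is to exploit the positivity and contractivity of $P_t$ together with the Kadison–Schwarz inequality. Recall that a unital (or contractive) completely positive map $P$ on a von Neumann algebra satisfies $P(x)^\ast P(x)\leq \|P\|\, P(x^\ast x)$ (and in the unital case simply $P(x)^\ast P(x)\leq P(x^\ast x)$); for a contractive normal completely positive map one has in any case $P(x)^\ast P(x)\le P(x^\ast x)$ after rescaling, or one works directly with the $2\times 2$ matrix trick. The plan is: fix a normal state $\omega$ on $\M$ and estimate $\omega\big((P_t(x)-x)^\ast(P_t(x)-x)\big)$ from above by expanding the square and bounding the cross terms using Kadison–Schwarz.

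Concretely, write
\begin{equation*}
\omega\big((P_t(x)-x)^\ast(P_t(x)-x)\big)=\omega\big(P_t(x)^\ast P_t(x)\big)-\omega\big(P_t(x)^\ast x\big)-\omega\big(x^\ast P_t(x)\big)+\omega(x^\ast x).
\end{equation*}
By Kadison–Schwarz, $P_t(x)^\ast P_t(x)\le P_t(x^\ast x)$, so the first term is $\le \omega(P_t(x^\ast x))$, which converges to $\omega(x^\ast x)$ by $\sigma$-weak continuity applied to the element $x^\ast x$. The middle two terms converge to $\omega(x^\ast x)$ and $\omega(x^\ast x)$ respectively, again by $\sigma$-weak continuity (here one uses that $y\mapsto\omega(x^\ast y)$ and $y\mapsto\omega(y x)$ are $\sigma$-weakly continuous functionals since $\omega$ is normal and multiplication by a fixed element is $\sigma$-weakly continuous). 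Hence the whole expression has $\limsup_{t\searrow 0}$ bounded by $\omega(x^\ast x)-\omega(x^\ast x)-\omega(x^\ast x)+\omega(x^\ast x)=0$, and since it is nonnegative it tends to $0$. This shows $P_t(x)\to x$ in the strong topology induced by all normal states, i.e. the $\sigma$-strong topology. Applying the same argument to $x^\ast$ (using that $P_t$ is $\ast$-preserving, being positive) gives convergence of $P_t(x)^\ast=P_t(x^\ast)\to x^\ast$ $\sigma$-strongly, which together with $\sigma$-strong convergence of $P_t(x)$ yields $\sigma$-strong$^\ast$ convergence.

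The only genuine subtlety is the normalization of the Kadison–Schwarz inequality in the merely contractive (non-unital) case: if $P_t$ is not unital one has $P_t(x)^\ast P_t(x)\le P_t(1)^{1/2}P_t(x^\ast x)P_t(1)^{1/2}$ or simply $P_t(x)^\ast P_t(x)\le \|P_t\|P_t(x^\ast x)\le P_t(x^\ast x)$ using $\|P_t\|\le 1$; either form suffices for the estimate above since we only need an upper bound on the first term whose $\limsup$ is then $\le\omega(x^\ast x)$. I expect this bookkeeping to be the main (and only mild) obstacle; everything else is a direct application of $\sigma$-weak continuity at $0$ and normality of states. One should also note at the outset that it suffices to check convergence against normal states rather than general normal functionals, since the $\sigma$-strong$^\ast$ topology is generated by the seminorms $x\mapsto\omega(x^\ast x)^{1/2}+\omega(xx^\ast)^{1/2}$ with $\omega$ ranging over normal states.
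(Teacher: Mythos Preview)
Your proposal is correct and follows essentially the same argument as the paper: expand $\omega((P_t(x)-x)^\ast(P_t(x)-x))$, bound $P_t(x)^\ast P_t(x)$ by $P_t(x^\ast x)$ via Kadison--Schwarz, and use $\sigma$-weak continuity to make all four terms converge to $\omega(x^\ast x)$, then repeat for $x^\ast$. Your extra remarks on the contractive (non-unital) form of Kadison--Schwarz and on restricting to normal states are accurate refinements but do not change the route.
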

\begin{proof}
Let $x\in \M$ and $\omega\in \M_\ast$. By the Kadison--Schwarz inequality,
\begin{equation*}
\omega((x-P_t(x))^\ast(x-P_t(x)))\leq \omega(x^\ast x)+\omega(P_t(x^\ast x))-\omega(x^\ast P_t(x))-\omega(P_t(x)^\ast x)\to 0.
\end{equation*}
Thus $P_t(x)\to x$ in the $\sigma$-strong topology. Since $P_t$ is a self-adjoint map, the same argument applies to $(P_t(x^\ast))$. Therefore $P_t(x)\to x$ in the $\sigma$-strong$^\ast$ topology.
\end{proof}

Now we turn to quantum Markov semigroups that satisfy a suitable symmetry condition which makes them amenable to self-adjoint operator techniques on Hilbert spaces. On general von Neumann algebras, the study of these semigroups in the non-tracial setting goes back to Goldstein and Lindsay \cite{GL95,GL99} and Cipriani \cite{Cip97}.

Now let $\phi$ be a normal semi-finite faithful weight on $\M$. We call a quantum dynamical semigroup $(P_t)$ on $\M$ \emph{GNS-symmetric with respect to $\phi$} or simply $\phi$-symmetric if $\phi\circ P_t\leq \phi$ for all $t\geq 0$ and
\begin{equation}\label{eq:GNS-sym}
\phi(P_t(x)^\ast y)=\phi(x^\ast P_t(y))
\end{equation}
for all $x,y\in \n_\phi$ and $t\geq 0$. If $x\in \n_\phi$, then
\begin{equation*}
\phi(P_t(x)^\ast P_t(x))\leq \phi(P_t(x^\ast x))\leq \phi(x^\ast x)
\end{equation*}
by the Kadison--Schwarz inequality, so that both sides of (\ref{eq:GNS-sym}) are well-defined.

If $(P_t)$ is a $\phi$-symmetric quantum dynamical semigroup, then it commutes with the modular group $\sigma^\phi$ (the argument from \cite[Proposition 2.2]{Wir22} for a state $\phi$ and unital $(P_t)$ can be easily extended to the case when $\phi$ is a weight and $P_t$ is only assumed to be contractive).

Moreover, $(P_t)$ gives rise to a strongly continuous symmetric contraction semigroup $(P_t^{(2)})$ on $L_2(\M,\phi)$ that acts on $\Lambda_\phi(\n_\phi)$ by 
\begin{equation*}
P_t^{(2)}\Lambda_\phi(x)=\Lambda_\phi(P_t x).
\end{equation*}
Since $(P_t)$ commutes with $\sigma^\phi$, the semigroup $(P_t^{(2)})$ commutes with $(\Delta_\phi^{is})$. In particular, this definition of $(P_t^{(2)})$ is consistent with the definition in terms of the symmetric embedding $\Delta_\phi^{1/2}\Lambda_\phi$ instead of $\Lambda_\phi$. Since $(P_t)$ consists of self-adjoint maps, $(P_t^{(2)})$ also commutes with $J_\phi$.

Let $C$ be the closure of $\{\Delta_\phi^{1/4}\Lambda_\phi(x)\mid x\in \n_\phi\cap\n_\phi^\ast,\,0\leq x\leq 1\}$. We call a strongly continuous symmetric contraction semigroup $(T_t)$ on $L_2(\M,\phi)$ a \emph{symmetric quantum dynamical semigroup} if it leaves $C$ invariant. We call $(T_t)$ \emph{GNS-symmetric} if it commutes with $(\Delta_\phi^{is})$.

It follows from \cite[Theorem 4.9]{GL99} that $(P_t)\mapsto (P_t^{(2)})$ is a one-to-one correspondence between $\phi$-symmetric quantum dynamical semigroups on $\M$ and GNS-symmetric quantum dynamical semigroups on $L_2(\M,\phi)$. We call a symmetric quantum dynamical semigroup on $L_2(\M,\phi)$ a \emph{symmetric quantum Markov semigroup} if the associated quantum dynamical semigroup $(P_t)$ on $\M$ is a quantum Markov semigroup, i.e., $P_t$ is unital for all $t\geq 0$. If $\phi$ is finite, this condition is equivalent to $T_t\phi^{1/2}=\phi^{1/2}$ for all $t\geq 0$.

Next we discuss quantum Dirichlet forms. First a short detour on quadratic forms is in order. Let $H$ be a Hilbert space. There are three equivalent points of view: A quadratic form $q$ can be defined as a quadratic map from $H$ to $[0,\infty]$, a quadratic map from a subspace of $H$ to $[0,\infty)$, or as a sequilinear map from the cartesian product of a subspace of $H$ with itself to $\IC$. The first two viewpoints are related by restricting $q$ to the elements where it is finite, and the second and third are related by the polarization identity. We will use all three viewpoints interchangeably and use the same symbol to denote all three objects.

A quadratic form $q$ on $H$ is called \emph{densely defined} if $D(q)=\{\xi\in H\mid q(\xi)<\infty\}$ is dense, \emph{closable} if the map $q\colon D(q)\to [0,\infty)$ is lower semicontinuous and \emph{closed} if the map $q\colon H\to [0,\infty]$ is lower semicontinuous. For every closed densely defined quadratic form $q$ there exists a positive self-adjoint operator $A$ on $H$ such that $D(A^{1/2})=D(q)$ and $\norm{ A^{1/2}\xi}^2=q(\xi)$ for all $\xi\in D(q)$.

Now let us come back to the setting of $L_2$ spaces associated with von Neumann algebras. Let $P_C$ be the metric projection onto the cone $C$ defined above. A densely defined closed quadratic form $\E$ on $L_2(\M,\phi)$ is called \emph{Dirichlet form} if $\E\circ\J=\E$ and $\E\circ P_C\leq \E$. It is a \emph{completely Dirichlet form} if for every $n\in\IN$ the amplification
\begin{equation*}
\E_n\colon L_2(\M\otimes M_n(\IC),\phi\otimes\tr_n)\to [0,\infty],\,\E_n([\xi_{ij}])=\sum_{i,j=1}^n \E(\xi_{ij})
\end{equation*}
is a Dirichlet form. Here $\tr_n$ is the normalized trace and $L_2(\M\otimes M_n(\IC),\phi\otimes\tr_n)$ is identified with $M_n(L_2(\M,\phi))$.

We call a completely Dirichlet form $\E$ \emph{GNS-symmetric} if $\E\circ\Delta_\phi^{is}=\E$ for all $s\in \IR$. It follows from \cite[Theorem 5.7]{GL99} that whenever $(T_t)$ is a symmetric quantum dynamical semigroup on $L_2(\M,\phi)$ with generator $\L$, then the quadratic form $\E$ with domain $D(\E)=D(\L^{1/2})$ and $\E(\xi)=\norm{\L^{1/2}\xi}^2$ for all $\xi\in D(\L^{1/2})$ is a GNS-symmetric completely Dirichlet form, and every GNS-symmetric completely Dirichlet form arises in this ways.

If $\phi$ is finite, the characterization of the quadratic forms associated with GNS-symmetric quantum Markov semigroups is also easy -- the additional condition is simply $\E(\phi^{1/2})=0$. In the case when $\phi$ is a weight, such a characterization seems to be missing in the literature. We give here a necessary and sufficient criterion that is analogous to the one in the classical case from \cite[Theorem I.1.6.6]{FOT11}.

\begin{proposition}\label{prop:char_Dir_form_conservative}
Let $\M$ be a von Neumann algebra, $\phi$ a normal semi-finite faithful weight and $\E$ a GNS-symmetric completely Dirichlet form on $L_2(\M,\phi)$. The associated quantum dynamical semigroup $(P_t)$ on $\M$ is a quantum Markov semigroup if and only if there exists a sequence $(\xi_n)$ in $D(\E)\cap\Lambda_\phi'(\n_\phi^\ast)$ such that $\pi_r(\xi_n)\to 1$ strongly and
\begin{equation*}
\E(\xi_n,\eta)\to 0
\end{equation*}
for all $\eta\in D(\E)\cap \Lambda_\phi(\m_\phi)$.
\end{proposition}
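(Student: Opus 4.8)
The plan is to follow the classical template from \cite[Theorem I.1.6.6]{FOT11}: conservativeness of $(P_t)$ — i.e.\ $P_t 1 = 1$ — is equivalent to a ``mass invariance'' statement at the level of the form, which in turn is witnessed by a suitable approximating sequence. The natural candidate for the sequence is obtained by applying the resolvents of the $L_2$-generator to a net approximating the ``vector $1$'', the obstruction being that $1$ need not lie in $L_2(\M,\phi)$ when $\phi$ is infinite, so one must work with the net $\Lambda_\phi'(e_n)$ for an approximate identity $(e_n)$ of $\n_\phi^\ast$ inside $\M$ and keep careful track of which pairings with test elements from $D(\E)\cap\Lambda_\phi(\m_\phi)$ actually make sense. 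The role of the right action $\pi_r(\xi_n)\to 1$ strongly is precisely to encode that the $\xi_n$ behave like ``$1$ acting from the right''; note $\Lambda_\phi'(\n_\phi^\ast)$ is the correct space since $\pi_r(\Lambda_\phi'(x)) = \pi_\phi'(x)$ on the dense range, and $\Lambda_\phi(\m_\phi)$ is where the relevant matrix elements of $\L^{(2)}$ are controlled.

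First I would reformulate conservativeness spectrally. Let $\L$ be the $L_2$-generator of $(T_t) = (P_t^{(2)})$ and $G_\alpha = (\alpha + \L)^{-1}$ its resolvent; on $\M$ let $R_\alpha = \int_0^\infty e^{-\alpha t} P_t\,dt$ be the resolvent of $(P_t)$. Unitality of $(P_t)$ for all $t$ is equivalent to $\alpha R_\alpha 1 = 1$ for one (hence all) $\alpha > 0$, and by normality and the semigroup law this can be tested against the weight: $(P_t)$ is Markovian iff $\phi(R_\alpha(x^\ast x)\,\cdot\,)$ ``saturates'', which after the GNS identification becomes the statement that $\langle \Lambda_\phi'(e_n), \alpha G_\alpha \Lambda_\phi(y)\rangle_\phi \to \phi(y)$ for $y \in \m_\phi$ along an approximate identity $(e_n)$. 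Expanding $\alpha G_\alpha = 1 - \L G_\alpha$ and using $\E(\xi,\eta) = \langle \L^{1/2}\xi, \L^{1/2}\eta\rangle = \langle \xi, \L\eta\rangle$ on the form core, the condition $\E(\xi_n,\eta)\to 0$ with $\xi_n := \alpha G_\alpha \zeta_n$ for an appropriate $\zeta_n$ (built from $\Lambda_\phi'(e_n)$) is seen to be equivalent to the convergence $\langle \xi_n, \eta\rangle_\phi \to \phi(\pi_l^{-1}(\eta))$-type identity that characterizes $P_t 1 = 1$. The GNS-symmetry hypothesis $\E\circ\Delta_\phi^{is} = \E$, i.e.\ commutation with the modular group, is what guarantees that $\Lambda_\phi'(e_n) \in D(\E)$ once $e_n$ is chosen analytic for $\sigma^\phi$ and in a form core, so that $\xi_n \in D(\E)\cap\Lambda_\phi'(\n_\phi^\ast)$ as required, and it is also needed to move $J_\phi$ and $\Delta_\phi^{1/4}$ past $G_\alpha$ when relating the symmetric embedding to $\Lambda_\phi$.

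For the converse direction — given such a sequence $(\xi_n)$, deduce that $(P_t)$ is Markovian — I would run the same chain of equivalences backwards: the hypothesis $\E(\xi_n,\eta)\to 0$ for all $\eta\in D(\E)\cap\Lambda_\phi(\m_\phi)$ together with $\pi_r(\xi_n)\to 1$ strongly forces $\langle \xi_n - \alpha G_\alpha\xi_n, \eta\rangle_\phi \to 0$, hence (using that $\Lambda_\phi(\m_\phi)$ is a core for $\E$ and $\pi_r(\xi_n)\to 1$ controls the $L_2$ behaviour against $\Lambda_\phi(\n_\phi)$) that $\alpha R_\alpha$ acts as the identity in the weak sense that pins down $P_t 1 = 1$; here one invokes \cite[Theorem 4.9, Theorem 5.7]{GL99} to pass between the $L_2$ semigroup and $(P_t)$ on $\M$. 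The main obstacle I anticipate is purely the weight-theoretic bookkeeping: ensuring that all the pairings $\E(\xi_n,\eta)$, $\langle\xi_n,\eta\rangle_\phi$ and the limits $\phi(y)$ are simultaneously well-defined and finite despite $\phi$ being only semi-finite, and verifying that the approximating vectors $\Lambda_\phi'(e_n)$ can be chosen inside $D(\E)$ — this is exactly where one needs $e_n$ entire analytic for $\sigma^\phi$, $e_n\in\n_\phi\cap\n_\phi^\ast$, $e_n\to 1$ $\sigma$-strongly$^\ast$, and the commutation of $(T_t)$ with $(\Delta_\phi^{is})$. Once the domains are set up correctly, the two implications are essentially the forward and backward reading of a single computation.
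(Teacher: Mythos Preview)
Your approach via resolvents and an approximate identity is exactly the paper's, but you have overcomplicated the domain bookkeeping and left the converse direction too vague.

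For the forward direction: you do not need $e_n$ entire analytic, nor do you need $\Lambda_\phi'(e_n)\in D(\E)$. Simply take any $x_n\in\n_\phi^\ast$ with $x_n\to 1$ strongly and set $\xi_n=\Lambda_\phi'((\L+1)^{-1}x_n)$. Since $(\L+1)^{-1}$ preserves $\n_\phi^\ast$ (Kadison--Schwarz plus $\phi\circ P_t\leq\phi$) and $\Lambda_\phi'((\L+1)^{-1}x_n)=(\L^{(2)}+1)^{-1}\Lambda_\phi'(x_n)$ (which follows just from $P_t^{(2)}$ commuting with $J_\phi$, i.e.\ from $P_t$ being $\ast$-preserving, not from modular analyticity), you get $\xi_n\in D(\L^{(2)})\subset D(\E)$ and $\xi_n\in\Lambda_\phi'(\n_\phi^\ast)$ for free. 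Then $\E(\xi_n,\eta)=\langle \L^{(2)}(\L^{(2)}+1)^{-1}\Lambda_\phi'(x_n),\eta\rangle=\langle\Lambda_\phi'(x_n-(\L+1)^{-1}x_n),\eta\rangle$, and writing $\eta=\sum_j\Lambda_\phi(y_j^\ast z_j)$ converts this into $\sum_j\langle\Lambda_\phi(y_j)\cdot(x_n-(\L+1)^{-1}x_n),\Lambda_\phi(z_j)\rangle\to 0$ because $(\L+1)^{-1}x_n\to(\L+1)^{-1}1=1$ strongly.

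For the converse: your sketch ``$\langle\xi_n-\alpha G_\alpha\xi_n,\eta\rangle\to 0$, hence \dots'' does not connect cleanly, and invoking ``$\Lambda_\phi(\m_\phi)$ is a core for $\E$'' is neither proven nor needed. The clean move is to put the resolvent on $\eta$: for $x\in\m_\phi$ set $\eta=\Lambda_\phi((\L+1)^{-1}x)=(\L^{(2)}+1)^{-1}\Lambda_\phi(x)$, which lies in $D(\L^{(2)})\cap\Lambda_\phi(\m_\phi)$ since $(\L+1)^{-1}$ preserves $\m_\phi$. Then $\E(\xi_n,\eta)=\langle\xi_n,\Lambda_\phi(x)-\Lambda_\phi((\L+1)^{-1}x)\rangle$, and writing $\xi_n=\Lambda_\phi'(x_n)$ and $x=\sum_j y_j^\ast z_j$ yields $\sum_j\langle\Lambda_\phi(y_j)\cdot(x_n-(\L+1)^{-1}x_n),\Lambda_\phi(z_j)\rangle$; passing to the limit using $x_n\to 1$ strongly gives $(\L+1)^{-1}1=1$, hence $\L 1=0$.
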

\begin{proof}
Write $\L$ for $\sigma$-weak generator of $(P_t)$ on $\M$. First assume that $(P_t)$ is a quantum Markov semigroup. Let $(x_n)$ be a sequence in $\n_\phi^\ast$ such that $x_n\to 1$ strongly$^\ast$. Since $(\L+1)^{-1}$ is normal, self-adjoint and $\L(1)=0$, we have $(\L+1)^{-1}(x_n)\to 1$ strongly$^\ast$. Moreover, $(\L+1)^{-1}$ leaves $\n_\phi^\ast$ invariant by the Kadison--Schwarz inequality.

Let $\xi_n=\Lambda_\phi^\prime((\L+1)^{-1}x_n)$. If $\eta\in D(\E)\cap \Lambda_\phi(\m_\phi)$, then
\begin{equation*}
\E(\xi_n,\eta)=\langle \L^{(2)}(\L^{(2)}+1)^{-1}\Lambda_\phi^\prime(x_n),\eta\rangle=\langle \Lambda_\phi^\prime(x_n)-\Lambda_\phi^\prime((\L+1)^{-1}(x_n)),\eta\rangle.
\end{equation*}
Since $\eta\in \Lambda_\phi(\m_\phi)$, there exist $y_1,\dots,y_m,z_1,\dots,z_m\in \n_\phi$ such that $\eta=\sum_j \Lambda_\phi(y_j^\ast z_j)$. Thus
\begin{equation*}
\E(\xi_n,\eta)=\sum_{j=1}^m \langle\Lambda_\phi(y_j)\cdot(x_n-(\L+1)^{-1}(x_n)),\Lambda_\phi(z_j)\rangle\to 0.
\end{equation*}
For the converse let $\eta=\Lambda_\phi((\L+1)^{-1}(x))$ with $x\in \m_\phi$. Since $x$ is a linear combination of four elements from $\mathfrak p_\phi$ and $\phi\circ(\L+1)^{-1}\leq \phi$, we have $(\L+1)^{-1}(x)\in \m_\phi$. Moreover, writing $\L^{(2)}$ for the generator of $(P_t^{(2)})$, we have $\eta=(\L^{(2)}+1)^{-1}\Lambda_\phi(x)\in D(\E)$.

Let $(\xi_n)$ be a sequence as in the proposition and set $x_n=J\pi_r(\xi_n)^\ast J$, so that $\xi_n=\Lambda_\phi^\prime(x_n)$. Note that $\pi_r(\xi_n)\to 1$ strongly implies $x_n^\ast\to 1$ strongly. By the GNS-symmetry of $(P_t)$ we have
\begin{align*}
\E(\xi_n,\eta)&=\langle \xi_n,\Lambda_\phi(x)-\Lambda_\phi((\L+1)^{-1}(x))\rangle\\
&=\langle \xi_n-(\L^{(2)}+1)^{-1}\xi_n,\Lambda_\phi(x)\rangle\\
&=\langle \Lambda_\phi^\prime(x_n-(\L+1)^{-1}(x_n)),\Lambda_\phi(x)\rangle.
\end{align*}
Write $x=\sum_j y_j^\ast z_j$ with $y_1,\dots, y_m,z_1,\dots,z_m\in \n_\phi$ to get
\begin{equation*}
\E(\xi_n,\eta)=\sum_{j=1}^m \langle\Lambda_\phi(y_j)\cdot(x_n-(\L+1)^{-1}(x_n)),\Lambda_\phi(z_j)\rangle.
\end{equation*}
Taking the limit on both sides we obtain
\begin{equation*}
0=\sum_{j=1}^m \langle (1-(\L+1)^{-1}(1))\Lambda_\phi(y_j),\Lambda_\phi(z_j)\rangle.
\end{equation*}
As this holds for all $y_j,z_j\in \n_\phi$, we conclude $(\L+1)^{-1}(1)=1$, which implies that $1\in D(\L)$ and $\L(1)=0$.
\end{proof}

\begin{definition}
Let $\M$ be a von Neumann algebra and $\phi$ a normal semi-finite faithful weight on $\M$. We say a completely Dirichlet form $\E$ on $L_2(\M,\phi)$ is \emph{quantum Dirichlet form} if there exists a sequence $(\xi_n)$ in $D(\E)\cap\Lambda_\phi^\prime(\n_\phi^\ast)$ such that $\pi_r(\xi_n)\to 1$ strongly and $\E(\xi_n,\eta)\to 0$ for all $\eta\in D(\E)\cap\Lambda_\phi(\m_\phi)$.
\end{definition}
Thus, a GNS-symmetric completely Dirichlet form is a quantum Dirichlet form if and only if the associated semigroup is a quantum Markov semigroup.

We also need some density properties for the domain of completely Dirichlet forms. We start with an abstract approximation lemma in Hilbert spaces (see \cite[Lemma 4.9]{AGS14b} for a similar result).

\begin{lemma}\label{lem:approx}
Let $H$ be a Hilbert space, $A$ a positive self-adjoint operator on $H$ and $\beta\geq 0$. If $V\subset H$ is a dense subspace, then $\operatorname{lin}\bigcup_{t>0}e^{-tA}(V)$ is a core for $A^\beta$.
\end{lemma}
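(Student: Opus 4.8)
The plan is to exploit the spectral calculus for the positive self-adjoint operator $A$ together with the smoothing effect of the semigroup $(e^{-tA})_{t>0}$. Set $W=\operatorname{lin}\bigcup_{t>0}e^{-tA}(V)$. First I would record two elementary facts. (i) For every $t>0$ and every $\xi\in H$, the vector $e^{-tA}\xi$ lies in $D(A^n)$ for all $n$, hence certainly in $D(A^\beta)$, because the function $\lambda\mapsto \lambda^\beta e^{-t\lambda}$ is bounded on $[0,\infty)$; consequently $W\subset D(A^\beta)$. (ii) For every $\eta\in D(A^\beta)$ one has $e^{-tA}\eta\to\eta$ and $A^\beta e^{-tA}\eta=e^{-tA}A^\beta\eta\to A^\beta\eta$ as $t\searrow 0$, both by dominated convergence in the spectral representation (the integrands are dominated by $1$ resp.\ by $|A^\beta\eta|$ pointwise in the spectral measure). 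In particular $e^{-tA}\eta\to\eta$ in the graph norm of $A^\beta$.

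The proof then has two steps. \emph{Step 1:} the subspace $\operatorname{lin}\bigcup_{t>0}e^{-tA}(H)$ is a core for $A^\beta$. Indeed, given $\eta\in D(A^\beta)$, by (ii) the vectors $e^{-tA}\eta$ converge to $\eta$ in the graph norm as $t\searrow0$, and each $e^{-tA}\eta$ lies in $e^{-tA}(H)$; so $\eta$ is in the graph-norm closure of this subspace. \emph{Step 2:} upgrade from $H$ to the dense subspace $V$. Fix $t>0$ and $\xi\in H$; I claim $e^{-tA}\xi$ lies in the graph-norm closure of $W$. Choose $v_k\in V$ with $v_k\to\xi$ in $H$. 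Then
\begin{equation*}
\norm{A^\beta e^{-tA}(\xi-v_k)}=\norm{A^\beta e^{-\frac{t}{2}A}\,e^{-\frac{t}{2}A}(\xi-v_k)}\leq \norm{A^\beta e^{-\frac{t}{2}A}}\,\norm{e^{-\frac{t}{2}A}(\xi-v_k)}\leq C_t\norm{\xi-v_k}\to 0,
\end{equation*}
where $C_t=\norm{A^\beta e^{-\frac{t}{2}A}}=\sup_{\lambda\geq 0}\lambda^\beta e^{-t\lambda/2}<\infty$, and likewise $\norm{e^{-tA}(\xi-v_k)}\leq\norm{\xi-v_k}\to 0$. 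Hence $e^{-tA}v_k=e^{-tA/2}(e^{-tA/2}v_k)$... more directly, $e^{-tA}v_k\in e^{-tA}(V)\subset W$ and $e^{-tA}v_k\to e^{-tA}\xi$ in the graph norm. Combining with Step 1: every $\eta\in D(A^\beta)$ is a graph-norm limit of vectors $e^{-tA}\xi$ ($\xi\in H$), each of which is a graph-norm limit of elements of $W$; a diagonal argument then shows $\eta$ lies in the graph-norm closure of $W$, so $W$ is a core for $A^\beta$.

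I do not expect a serious obstacle here; the only points requiring a little care are the finiteness of the constants $\sup_{\lambda\geq0}\lambda^\beta e^{-t\lambda}$ (trivial, and where the hypothesis $\beta\geq 0$ is used — for $\beta<0$ the factor blows up at $\lambda=0$ unless $0\notin\sigma(A)$) and the justification of $A^\beta e^{-tA}=e^{-tA}A^\beta$ on $D(A^\beta)$ together with the dominated-convergence limits, all of which are immediate from the spectral theorem. The case $\beta=0$ is degenerate ($A^0$ is a projection, or the identity on $\overline{\operatorname{ran}}A$) but the argument goes through verbatim, or one may simply assume $\beta>0$ as that is all that is needed in the sequel.
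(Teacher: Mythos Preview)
Your proof is correct and follows essentially the same route as the paper: use the spectral bound $\sup_{\lambda\geq 0}\lambda^\beta e^{-t\lambda}<\infty$ to show that $e^{-tA}$ maps $H$ continuously into $D(A^\beta)$ with the graph norm, and then combine this with $e^{-tA}\eta\to\eta$ in graph norm for $\eta\in D(A^\beta)$. The paper streamlines your two steps into one by directly choosing the approximating sequence $\xi_n\in V$ for a given $\xi\in D(A^\beta)$ (rather than for arbitrary $\xi\in H$), which avoids the need for your ``diagonal argument'' --- though in fact no diagonal argument is needed in your version either, since graph-norm closures are transitive.
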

\begin{proof}
First note that $e^{-tA}(V)\subset D(A^\beta)$ by the spectral theorem. Let $\xi\in D(A^\beta)$. By assumption there exists a sequence $(\xi_n)$ in $V$ such that $\xi_n\to \xi$. By the spectral theorem, there exists $C>0$ such that
\begin{equation*}
\norm{A^\beta(e^{-tA}\xi_n-e^{-tA}\xi)}\leq \frac{C}{t^\beta}\norm{\xi-\xi_n}.
\end{equation*}
Thus $e^{-tA}\xi_n\to e^{-tA}\xi$ in the graph norm of $A^\beta$ for every $t>0$. Moreover,
\begin{equation*}
A^\beta(e^{-tA}\xi-\xi)=e^{-tA}A^\beta\xi-A^{\beta}\xi\to 0
\end{equation*}
as $t\searrow 0$. Hence $\bigcup_{t>0}e^{-tA}(V)$ is dense in $D(A^\beta)$ with respect to the graph norm.
\end{proof}

\begin{remark}
The same result holds when $A$ is the generator of an analytic contraction semigroup on a Banach space, one just has to replace the application of the spectral theorem by holomorphic functional calculus.
\end{remark}

As an application we immediately get the following density result. Recall that $\AA_\phi$ is the maximal Tomita algebra associated with $\Lambda_\phi(\n_\phi\cap\n_\phi^\ast)$.

\begin{lemma}\label{lem:bounded_elements_form_core}
Let $\M$ be a von Neumann algebra and $\phi$ a normal semi-finite faithful weight on $\M$. If $\E$ is a GNS-symmetric completely Dirichlet form on $L_2(\M,\phi)$, then $\AA_\phi\cap D(\E)$ is a core for $\E$.
\end{lemma}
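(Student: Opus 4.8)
The plan is to reduce the statement to the abstract core criterion \Cref{lem:approx}. Let $(P_t^{(2)})$ denote the GNS-symmetric quantum dynamical semigroup on $L_2(\M,\phi)$ associated with $\E$, so that $P_t^{(2)}=e^{-t\L}$ for a positive self-adjoint operator $\L$ with $D(\L^{1/2})=D(\E)$ and $\E(\xi)=\norm{\L^{1/2}\xi}^2$, and let $(P_t)$ be the corresponding $\phi$-symmetric quantum dynamical semigroup on $\M$, which acts on $\Lambda_\phi(\n_\phi)$ by $P_t^{(2)}\Lambda_\phi(x)=\Lambda_\phi(P_t x)$. If one can find a dense subspace $V\subseteq L_2(\M,\phi)$ with $P_t^{(2)}(V)\subseteq\AA_\phi$ for every $t>0$, then \Cref{lem:approx} (applied with $H=L_2(\M,\phi)$, $A=\L$, $\beta=1/2$) shows that $\operatorname{lin}\bigcup_{t>0}P_t^{(2)}(V)$ is a core for $\L^{1/2}$, hence for $\E$; since $P_t^{(2)}(L_2(\M,\phi))\subseteq D(\L^{1/2})=D(\E)$ by the spectral theorem, this core is contained in $\AA_\phi\cap D(\E)$, and therefore $\AA_\phi\cap D(\E)$ is itself a core for $\E$.

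The heart of the argument is that $V=\AA_\phi$ already works, i.e.\ $P_t^{(2)}(\AA_\phi)\subseteq\AA_\phi$ for all $t>0$ (note that $\AA_\phi$, being a Tomita algebra, is dense in its completion $L_2(\M,\phi)$). To see the invariance, recall first that GNS symmetry of $\E$ means that $(P_t^{(2)})$ commutes with the modular group $(\Delta_\phi^{is})_{s\in\IR}$; consequently $P_t^{(2)}$ leaves $D(\Delta_\phi^n)$ invariant and commutes with $\Delta_\phi^n$ on it, for every $n\in\IZ$. Hence, given $\xi\in\AA_\phi$, we have $P_t^{(2)}\xi\in\bigcap_{n\in\IZ}D(\Delta_\phi^n)$, and writing $\Delta_\phi^n\xi=\Lambda_\phi(a_n)$ with $a_n\in\n_\phi\cap\n_\phi^\ast$ we obtain
\[
\Delta_\phi^n\, P_t^{(2)}\xi=P_t^{(2)}\Delta_\phi^n\xi=P_t^{(2)}\Lambda_\phi(a_n)=\Lambda_\phi(P_t(a_n)).
\]
It then remains to check that $P_t(a_n)\in\n_\phi\cap\n_\phi^\ast$. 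Since $P_t$ is positive it is $\ast$-preserving, so $P_t(a_n)^\ast=P_t(a_n^\ast)$; and by the Kadison--Schwarz inequality together with $\phi\circ P_t\leq\phi$,
\[
\phi\big(P_t(a_n)^\ast P_t(a_n)\big)\leq\phi\big(P_t(a_n^\ast a_n)\big)\leq\phi(a_n^\ast a_n)<\infty ,
\]
so $P_t(a_n)\in\n_\phi$; applying the same bound to $a_n^\ast\in\n_\phi$ gives $P_t(a_n)^\ast=P_t(a_n^\ast)\in\n_\phi$, i.e.\ $P_t(a_n)\in\n_\phi^\ast$. Thus $\Delta_\phi^n P_t^{(2)}\xi\in\Lambda_\phi(\n_\phi\cap\n_\phi^\ast)$ for every $n\in\IZ$, which is precisely the statement that $P_t^{(2)}\xi\in\AA_\phi$.

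Combining the two parts proves the lemma. Everything apart from the invariance $P_t^{(2)}(\AA_\phi)\subseteq\AA_\phi$ is formal, and I expect that invariance to be the only point requiring care; the mechanism is that the two defining properties of the Tomita algebra $\AA_\phi$ are preserved for independent reasons — the $\Delta_\phi$-smoothness because $(P_t^{(2)})$ commutes with the modular group, and the membership in the left Hilbert algebra $\Lambda_\phi(\n_\phi\cap\n_\phi^\ast)$ because of the Kadison--Schwarz inequality and the domination $\phi\circ P_t\leq\phi$.
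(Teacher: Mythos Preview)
Your proof is correct and follows essentially the same route as the paper: both reduce to \Cref{lem:approx} via the invariance of $\AA_\phi$ under $(P_t^{(2)})$, and you have simply spelled out in full the details of that invariance (commutation with $\Delta_\phi^n$ plus Kadison--Schwarz for the $\n_\phi\cap\n_\phi^\ast$-membership) which the paper leaves as a one-line claim. The only cosmetic difference is that the paper states the invariance for $\AA_\phi\cap D(\E)$ and applies \Cref{lem:approx} with that as $V$, whereas you take $V=\AA_\phi$ and observe $P_t^{(2)}(\AA_\phi)\subseteq\AA_\phi\cap D(\E)$; these amount to the same thing.
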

\begin{proof}
Since the associated semigroup $(T_t)$ is a GNS-symmetric quantum dynamical semigroup, $\AA_\phi\cap D(\E)$ is invariant under $(T_t)$. Now the result follows from \Cref{lem:approx}.
\end{proof}

We also need a density result inside $\M$. We write $\M_1$ for the closed unit ball in $\M$.

\begin{lemma}\label{lem:bounded_elements_strong_dense}
Let $\M$ be a von Neumann algebra and $\phi$ a normal semi-finite faithful weight on $\M$. If $(T_t)$ is a GNS-symmetric quantum dynamical semigroup on $L_2(\M,\phi)$ with generator $\L^{(2)}$, then $\{x\in \n_\phi\cap\n_\phi^\ast\mid \Lambda_\phi(x)\in \AA_\phi\cap D(\E)\}\cap \M_1$ is strong$^\ast$ dense in $\M_1$.
\end{lemma}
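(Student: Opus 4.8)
The plan is to combine the $L_2$-density of $\AA_\phi\cap D(\E)$ (from \Cref{lem:bounded_elements_form_core}) with a truncation argument that pulls a core element back into the unit ball of $\M$ without leaving $\AA_\phi\cap D(\E)$. First I would take an arbitrary $x\in\M_1$; by Kaplansky density it suffices to approximate elements $x$ with $\Lambda_\phi(x)$ (resp.\ $\Lambda_\phi'(x)$) ranging over a strong$^\ast$-dense subset, so we may as well assume $x\in\n_\phi\cap\n_\phi^\ast$ to begin with. Then $\Lambda_\phi(x)\in L_2(\M,\phi)$, and since $\AA_\phi\cap D(\E)$ is a core for $\E$, in particular dense in $L_2(\M,\phi)$, there is a sequence $\xi_n\in\AA_\phi\cap D(\E)$ with $\xi_n\to\Lambda_\phi(x)$. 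The issue is that $\xi_n=\Lambda_\phi(y_n)$ for some $y_n$ need not lie in $\M_1$.

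The key step is to apply the metric projection $P_C$ onto the self-dual cone $C$ together with the resolvent-type smoothing already used in the paper. More precisely, I would use that for a GNS-symmetric completely Dirichlet form the "unit" operations are compatible with $\E$: applying $P_C$ does not increase $\E$, and iterating operations of the form $\xi\mapsto P_C(\lambda\mathbf 1_{[0,1]})$-type functional calculus on the positive part produces, from $\Lambda_\phi(x)$ with $0\le x\le 1$, an element still represented by an operator in $\M_1$. Concretely: write $x\in\M_1$ with $\Lambda_\phi(x)\in\AA_\phi\cap D(\E)$ as a linear combination of four positive contractions in $\n_\phi\cap\n_\phi^\ast$ whose $\Lambda_\phi$-images still lie in $\AA_\phi\cap D(\E)$ (this uses that $\AA_\phi$ is closed under the relevant operations, being a Tomita algebra, and that $\E$ and the modular group interact well); so it suffices to treat $0\le x\le 1$. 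For such $x$ one has $\Delta_\phi^{1/4}\Lambda_\phi(x)\in C$, and one approximates $\Delta_\phi^{1/4}\Lambda_\phi(x)$ in $L_2$ by $T_t$-smoothed elements $\Delta_\phi^{1/4}\Lambda_\phi(x_{n,t})$ with $x_{n,t}\in\n_\phi\cap\n_\phi^\ast$; applying the projection $P_C$ after smoothing keeps us in $C$, hence produces operators $y_{n,t}$ with $0\le y_{n,t}\le 1$, i.e.\ in $\M_1$, while $T_t$-invariance of $\AA_\phi\cap D(\E)$ keeps $\Lambda_\phi(y_{n,t})$ in that core. Then $\Lambda_\phi(y_{n,t})\to\Lambda_\phi(x)$ in $L_2(\M,\phi)$, which, because $0\le y_{n,t},x\le 1$, upgrades to strong$^\ast$ convergence $y_{n,t}\to x$ in $\M$ (on the bounded set $\M_1$, $L_2$-convergence of a $\Lambda_\phi$-net to $\Lambda_\phi$ of a bounded element forces strong convergence, and self-adjointness handles the $^\ast$).

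Finally I would run the usual diagonal/Kaplansky bootstrap: the set of $x\in\M_1$ that are strong$^\ast$-approximable by elements of $S:=\{x\in\n_\phi\cap\n_\phi^\ast\mid\Lambda_\phi(x)\in\AA_\phi\cap D(\E)\}\cap\M_1$ has just been shown to contain every positive contraction in $\n_\phi\cap\n_\phi^\ast$ with $\Lambda_\phi(x)\in D(\E)$; since $D(\E)$ is dense in $L_2$ and $\n_\phi\cap\n_\phi^\ast$ is strong$^\ast$-dense in $\M$, a first approximation inside $\M_1$ by elements $x$ with $\Lambda_\phi(x)\in D(\E)$ (using that $D(\E)$ is a core, hence $L_2$-dense, combined with a projection onto $C$ to preserve the bound) reduces the general case to this one, and taking the $\sigma$-strong$^\ast$ closure of $S$ inside $\M_1$ yields all of $\M_1$. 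The main obstacle is the bookkeeping in this truncation step: one must make sure that applying $P_C$ (or the functional calculus cutting $x$ to its part in $[0,1]$) neither destroys membership in the Tomita algebra $\AA_\phi$ nor membership in $D(\E)$, and that it genuinely returns an element of $\M_1$; this is where the GNS-symmetry ($\E\circ\Delta_\phi^{is}=\E$, so the cone $C$ and the modular data are respected) and the $T_t$-invariance of $\AA_\phi\cap D(\E)$ from \Cref{lem:bounded_elements_form_core} do the real work, together with the fact that $\AA_\phi$, as the maximal Tomita algebra of $\phi$, is stable under all the operations involved.
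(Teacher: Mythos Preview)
Your approach has a genuine gap, and the paper's argument is both simpler and avoids it entirely.

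The problem is in your ``main obstacle'': you never verify, and in fact cannot verify, that the nonlinear operations you use preserve membership in $\AA_\phi$. The Tomita algebra $\AA_\phi$ consists of vectors $\xi$ such that $t\mapsto\Delta_\phi^{it}\xi$ extends to an entire analytic function with all values in the left Hilbert algebra. Neither the metric projection $P_C$ onto the order-interval cone nor the decomposition of a self-adjoint element into positive and negative parts respects entire analyticity: these are continuous-functional-calculus operations, and continuous (non-polynomial) functional calculus destroys analyticity for the modular group in general. So after you apply $P_C$ to land back in $\M_1$, you have lost control of $\Lambda_\phi(y_{n,t})\in\AA_\phi$, which is exactly the condition you need. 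Your remark that ``$\AA_\phi$, as the maximal Tomita algebra of $\phi$, is stable under all the operations involved'' is not justified and is false for the nonlinear truncations you invoke.

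The paper's proof bypasses this completely. It starts from the standard fact that $\{x\in\n_\phi\cap\n_\phi^\ast\mid\Lambda_\phi(x)\in\AA_\phi\}\cap\M_1$ is already strong$^\ast$-dense in $\M_1$, and then simply applies the semigroup $P_t$ to a self-adjoint element $x$ of this set. Because $(P_t)$ is GNS-symmetric it commutes with the modular group, so $P_t(x)$ stays in $\AA_\phi$; because $P_t$ is contractive, $P_t(x)\in\M_1$; by the spectral theorem $\Lambda_\phi(P_t(x))=T_t\Lambda_\phi(x)\in D(\E)$; and by \Cref{lem:QMS_strong_conv}, $P_t(x)\to x$ strongly$^\ast$. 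No truncation, no $P_C$, no functional calculus---the linear, modular-equivariant smoothing $P_t$ already does everything you need.
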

\begin{proof}
By standard approximation arguments, the set $D=\{x\in \n_\phi\cap \n_\phi^\ast\mid \Lambda_\phi(x)\in\AA_\phi\}\cap\M_1$ is strong$^\ast$ dense in $\M_1$. Let $x\in D$ be self-adjoint and write $(P_t)$ for the quantum dynamical semigroup on $\M$ associated with $(T_t)$. By \Cref{lem:QMS_strong_conv} we have $P_t (x)^\ast=P_t x\to x$ strongly as $t\searrow 0$.

By the spectral theorem and the definition of $(T_t)$,
\begin{equation*}
\Lambda_\phi(P_t(x))=T_t\Lambda_\phi(x)\in D(\E).
\end{equation*}
Finally, $P_t (x)\in D$ since $(P_t)$ is contractive and GNS-symmetric.
\end{proof}

\section{Tomita bimodules and the Fock space construction}\label{sec:Tomita_bimodule}

In this section we introduce the notion of Tomita bimodules (\Cref{def:Tomita_bimodule}). Tomita bimodules are bimodules over Tomita algebras with certain extra structure that resembles the modular group and modular conjugation on a Tomita algebra. Tomita bimodules will serve as codomains of the derivations we associate with GNS-symmetric quantum Markov semigroups.

The resemblance of the extra structure on a Tomita bimodule with the modular group and modular conjugation on a Tomita algebra is more than just an analogy. We show (\Cref{thm:Fock_weight}) using a Fock space construction that if the actions of a Tomita algebra $\AA$ on a Tomita bimodule are normal, this bimodule can be embedded into the GNS Hilbert space of a weight on a von Neumann algebra containing $\pi_l(\AA)^{\prime\prime}$, this weight extends the canonical weight on $\pi_l(\AA)^{\prime\prime}$ and its modular group leaves $\pi_l(\AA)^{\prime\prime}$ invariant. This construction generalizes Shlyakhtenko's free Araki--Woods factors \cite{Shl97}, which correspond to the case $\AA=\IC$ (up to a change of sign).

\subsection{Tomita bimodules}

Some notation: If $\H$ is a pre-Hilbert space, we write $B(\H)$ for the set of all adjointable linear operators on $\H$. Of course this set coincides with the set of bounded linear operators if $\H$ is complete. If $A$ is a $\ast$-algebra, we write $A^\op$ for the algebra with the same vector space structure and involution as $A$ and the product $a\cdot_\op b=ba$.

\begin{definition}\label{def:Tomita_bimodule}
Let $\AA$ be a Tomita algebra. A \emph{Tomita bimodule} over $\AA$ is a pre-Hilbert space $\H$ with non-degenerate commuting $\ast$-representations $L\colon (\AA,^\sharp)\to B(\H)$, $R\colon (\AA^\op,^\flat)\to B(\H)$, an anti-linear isometric involution $\J\colon \H\to\H$ and a group $(\U_z)_{z\in\IC}$ of adjointable operators on $\H$ such that
\begin{enumerate}[(a)]
\item $\norm{L(a)}\leq \norm{\pi_l(a)}$, $\norm{R(a)}\leq \norm{\pi_r(a)}$ for all $a\in\AA$,
\item $\J L(a)=R(Ja)\J$ for all $a\in\AA$,
\item the map $\IC\to\IC,\,z\mapsto\langle \xi,\U_z \eta\rangle$ is analytic for all $\xi,\eta\in\H$,
\item $\langle \xi,\U_z \eta\rangle=\langle \U_{-\bar z}\xi,\eta\rangle$ for all $\xi,\eta\in \H$, $z\in\IC$,
\item $\U_z L(a)\U_{-z}=L(U_z a)$ for all $a\in\AA$, $z\in\IC$,
\item $\U_z \J=\J \U_{\bar z}$ for all $z\in\IC$.
\end{enumerate}
\end{definition}

\begin{remark}
One could also formulate the axioms of a Tomita bimodules also in terms of the involution $\J\U_{-i/2}$ instead of $\J$, which would make the analogy to Tomita algebras even more apparent. However, since $\J$ is bounded in contrast to $\J\U_{-i/2}$, it is easier to construct it in concrete examples. Moreover, as there is no (internal) multiplication on a Tomita bimodule, the operator $\J\U_{-i/2}$ does not have the same distinguished role as $S_0=JU_{-i/2}$ has for a Tomita algebra. For this reason we chose the presentation in terms of $\J$.
\end{remark}

\begin{remark}
In addition to the axioms of a Tomita bimodule, one could also require that the elements $\xi\in\H$ be bounded vectors (see below). However, this is typically not the case for the range of the derivations we associate with GNS-symmetric quantum Markov semigroups, so we do not include this condition in our definition of Tomita bimodules.
\end{remark}

\begin{remark}
During the preparation of this manuscript, we learned that the term ``Tomita bimodule'' has already been used for a related, but different object in \cite{GY19}. There the authors study $\AA$-bimodules with an $\AA$-valued inner product over an \emph{unimodular} Tomita algebra $\AA$ and some extra structure similar to the one in our definition.
\end{remark}

Let $\overline\H$ be the completion of $\H$. Property (a) in \Cref{def:Tomita_bimodule} guarantees that the maps $L(a)$ and $R(a)$ can be extended to bounded operators on $\overline \H$ for all $a\in\AA$. Moreover, $\J$ extends to an anti-unitary involution on $\overline\H$ and $(\U_t)_{t\in\IR}$ extends to a strongly continuous unitary group on $\overline\H$. We shall denote these extension by the same symbols. 

Note that the maps $\U_z$ for $z\in\IC\setminus \IR$ are usually not bounded on $\H$, so that they cannot be extended to bounded operators on $\overline \H$. However, they have (densely defined) closed extensions, as we will see next.

\begin{lemma}\label{lem:mod_group_closable}
Let $\AA$ be a Tomita algebra and $\H$ a Tomita bimodule over $\AA$. For every $z\in\IC$ the operator $\U_z$ is closable in $\overline\H$.
\end{lemma}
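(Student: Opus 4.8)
The plan is to invoke the standard criterion that a densely defined operator on a Hilbert space is closable if and only if its adjoint is densely defined. I regard $\U_z$ as an operator on $\overline\H$ with domain the dense subspace $\H$, and I will show that the Hilbert space adjoint $(\U_z)^\ast$ contains the operator $\U_{-\bar z}$ with its natural domain $\H$. Since $\H$ is dense in $\overline\H$, this makes $(\U_z)^\ast$ densely defined, hence $\U_z$ is closable (with closure $(\U_z)^{\ast\ast}$).

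To verify the containment, fix $\eta\in\H$ and consider the functional $\xi\mapsto\langle\U_z\xi,\eta\rangle$ on $\H$. Using $\langle\U_z\xi,\eta\rangle=\overline{\langle\eta,\U_z\xi\rangle}$ together with property (d) of \Cref{def:Tomita_bimodule} (applied with the two vectors interchanged, so that $\langle\eta,\U_z\xi\rangle=\langle\U_{-\bar z}\eta,\xi\rangle$), one gets $\langle\U_z\xi,\eta\rangle=\langle\xi,\U_{-\bar z}\eta\rangle$ for all $\xi\in\H$. Since $\U_{-\bar z}\eta$ is a fixed vector of $\H\subseteq\overline\H$, the functional is bounded on $\H$, so $\eta\in D((\U_z)^\ast)$ and $(\U_z)^\ast\eta=\U_{-\bar z}\eta$. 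Thus $\H\subseteq D((\U_z)^\ast)$, which finishes the argument. Equivalently, one may phrase this purely algebraically: property (d) says exactly that $\U_{-\bar z}$ is a formal adjoint of $\U_z$ on the common dense domain $\H$, and the existence of a densely defined formal adjoint forces closability.

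The only point needing care is the bookkeeping of the conjugate-linearity convention in property (d), and the observation that, although $\U_z$ for $z\in\IC\setminus\IR$ need not be bounded on $\H$, boundedness plays no role here — only the adjointability relation (d) on the dense domain $\H$ is used. So there is essentially no obstacle: closability is immediate from the definition, and the substantive work of identifying these closed extensions with well-behaved operators is deferred to the later development.
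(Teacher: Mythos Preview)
Your proof is correct, and it is genuinely more elementary than the paper's. You use property (d) of \Cref{def:Tomita_bimodule} directly as a formal-adjoint relation on the dense domain $\H$, and then invoke the standard criterion that a densely defined operator with a densely defined formal adjoint is closable. This is entirely self-contained.

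The paper instead applies Stone's theorem to the strongly continuous unitary group $(\U_t)_{t\in\IR}$ on $\overline\H$ to obtain a non-singular positive self-adjoint operator $A$ with $\U_t=A^{it}$, and then uses \cite[Lemma VI.2.3]{Tak03} to show that every $\xi\in\H$ lies in $D(A^\alpha)$ for all $\alpha\in\IR$ with $\U_{-i\alpha}\xi=A^\alpha\xi$, so that $A^{iz}$ is an explicit closed extension of $\U_z$. The gain of the paper's route is that it identifies a concrete closed extension of $\U_z$ as a complex power of the Stone generator, which aligns naturally with the modular picture and is conceptually useful when one later wants to manipulate these operators (e.g.\ in defining $\mathcal S$ and $\F$ in the Fock construction). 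Your argument, by contrast, gives closability in one line but does not exhibit such an extension; since the lemma only asserts closability, this is perfectly adequate.
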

\begin{proof}
Since $(\U_t)$ is a strongly continuous unitary group on $\overline\H$, there exists an injective positive self-adjoint operator $A$ on $\overline\H$ such that $\U_t=A^{it}$. By \cite[Lemma VI.2.3]{Tak03} every $\xi\in\H$ belongs to $D(A^\alpha)$ for all $\alpha\in \IR$ and $\U_{-i\alpha}\xi=A^\alpha\xi$. Thus $A^\alpha$ is a closed extension of $\U_{-i\alpha}$.

Since $A^{it}$ is unitary for $t\in\IR$, it follows that $A^{\alpha+it}$ is closed and 
\begin{equation*}
A^{\alpha+it}\xi=A^\alpha A^{it}\xi=\U_{-i\alpha}\U_t\xi=\U_{-i\alpha+t}\xi
\end{equation*}
for all $\xi\in \H$. Hence $A^{\alpha+it}$ is a closed extension of $\U_{-i\alpha+t}$.
\end{proof}

We call the Tomita bimodule $\H$ \emph{normal} if the map $\pi_l(a)\mapsto L(a)$ extends to a normal $\ast$-representation $\tilde L$ of $\pi_l(\AA)^{\prime\prime}$ on $\overline\H$. In this case, the map $J\pi_r(a)^\ast J\mapsto R(a)$ extends to a normal $\ast$-representation $\tilde R$ of $(\pi_l(\AA)^{\prime\prime})^\op$ on $\overline\H$ (cf. \Cref{sec:carre_du_champ}). This gives rise to the closely related notion of Tomita correspondences.

\begin{definition}
Let $\M$ be a von Neumann algebra and $\phi$ a normal semi-finite faithful weight on $\M$. A \emph{Tomita $\M$-$\M$-correspondence} $\H$ is an $\M$-$\M$-correspondence $\H$ endowed with an anti-unitary involution $\J\colon\H\to\H$ and a strongly continuous unitary group $(\U_t)_{t\in\IR}$ on $\H$ such that
\begin{enumerate}[(a)]
\item $\J(x\xi y)=y^\ast(\J\xi)x^\ast$ for all $x,y\in\M$, $\xi\in\H$,
\item $\U_t(x\xi y)=\sigma^\phi_t(x)(\U_t\xi)\sigma^\phi_t(y)$ for all $x,y\in\M$, $\xi\in \H$, $t\in\IR$,
\item $\J\U_t=\U_t \J$ for all $t\in\IR$.
\end{enumerate}
\end{definition}

Condition (a) means that $\J$ implements an anti-isomorphism between $\H$ and the dual correspondence $\H^\op$.
\begin{lemma}\label{lem:boundedness_Tomita_correspondence}
A vector $\xi\in \H$ is left-bounded if and only if $\J\xi$ is right-bounded, and in this case $R_\phi(\J\xi)=\J L_\phi(\xi)J$. Moreover, if $\xi\in\H$ is left-bounded, then $\U_t\xi$ is left-bounded for all $t\in\IR$, and
\begin{equation*}
L_\phi(\U_t\xi)^\ast L_\phi(\U_t\eta)=\sigma^\phi_t(L_\phi(\xi)^\ast L_\phi(\eta))
\end{equation*}
for all left-bounded vectors $\xi,\eta\in\H$ and $t\in\IR$.
\end{lemma}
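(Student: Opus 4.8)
The plan is to verify the three claims of \Cref{lem:boundedness_Tomita_correspondence} by translating everything into statements about the operators $L_\phi$ and $R_\phi$, using the defining properties (a)--(c) of a Tomita $\M$-$\M$-correspondence together with the explicit formula for the right action on $L_2(\M,\phi)$, namely $\zeta\cdot y=J_\phi y^\ast J_\phi\zeta$, and the fact that $J_\phi$ implements the modular conjugation while $\sigma^\phi_t$ is implemented by $\Delta_\phi^{it}$ on $L_2(\M,\phi)$.

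First I would treat the left-bounded/right-bounded duality. Suppose $\xi\in\H$ is left-bounded, so $\norm{\xi y}\le C\norm{\psi^{1/2}y}$ for all $y\in\M$ (with $\psi=\phi$ here). Using property (a) of the Tomita correspondence, $\J(\xi y)=y^\ast(\J\xi)$, hence $\norm{y^\ast(\J\xi)}=\norm{\xi y}\le C\norm{\phi^{1/2}y}$. Now one needs the identity $\norm{\phi^{1/2}y}=\norm{y^\ast\phi^{1/2}}$, which holds because $J_\phi$ is an anti-isometry sending $\Lambda_\phi(y)\mapsto\Lambda_\phi'(y^\ast)$, i.e. $\phi^{1/2}y = J_\phi(y^\ast\phi^{1/2})$ in the notation of the excerpt. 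Thus $\J\xi$ is right-bounded. Conversely, applying $\J$ again (it is an involution) gives the reverse implication. For the formula $R_\phi(\J\xi)=\J L_\phi(\xi)J_\phi$: both sides are bounded operators $L_2(\M,\phi)\to\H$; I would check they agree on the dense set $\{x\phi^{1/2}\mid x\in\M\}$. On the one hand $R_\phi(\J\xi)(x\phi^{1/2})=x(\J\xi)$ by definition of $R_\phi$. On the other hand $\J L_\phi(\xi)J_\phi(x\phi^{1/2})=\J L_\phi(\xi)(\phi^{1/2}x^\ast)$ — here I use $J_\phi(x\phi^{1/2})=\phi^{1/2}x^\ast$ — wait, I should be careful: $J_\phi\Lambda_\phi(x)=\Lambda_\phi'(x^\ast)=\phi^{1/2}x^\ast$, correct. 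Then $L_\phi(\xi)(\phi^{1/2}x^\ast)=\xi\cdot x^\ast$ by definition of $L_\phi$ (it extends $\phi^{1/2}y\mapsto\xi y$), and finally $\J(\xi\cdot x^\ast)=x(\J\xi)$ by property (a). So the two expressions match, proving the formula.

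Next, for the second assertion, let $\xi\in\H$ be left-bounded; I want to show $\U_t\xi$ is left-bounded for $t\in\IR$. Using property (b), $(\U_t\xi)y=\U_t\bigl(\xi\,\sigma^\phi_{-t}(y)\bigr)\sigma^\phi_t(1)$ — more precisely, writing $y=\sigma^\phi_t(y')$ with $y'=\sigma^\phi_{-t}(y)$, property (b) gives $\U_t(\xi y')=\sigma^\phi_t(1)(\U_t\xi)\sigma^\phi_t(y')$, so $(\U_t\xi)y=\U_t(\xi\sigma^\phi_{-t}(y))$. Since $\U_t$ is unitary and $\xi$ is left-bounded, $\norm{(\U_t\xi)y}=\norm{\xi\sigma^\phi_{-t}(y)}\le C\norm{\phi^{1/2}\sigma^\phi_{-t}(y)}$, and $\norm{\phi^{1/2}\sigma^\phi_{-t}(y)}=\norm{\Delta_\phi^{-it}(\phi^{1/2}y)}=\norm{\phi^{1/2}y}$ since $\Delta_\phi^{it}$ is unitary and commutes appropriately — here I use that $\sigma^\phi_t$ is $\phi$-preserving on the GNS level. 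Hence $\U_t\xi$ is left-bounded with the same constant. At the operator level this computation shows $L_\phi(\U_t\xi) = \U_t\, L_\phi(\xi)\,\Delta_\phi^{-it}$ as operators $L_2(\M,\phi)\to\H$ (check on $\phi^{1/2}y$). Consequently $L_\phi(\U_t\xi)^\ast L_\phi(\U_t\eta)=\Delta_\phi^{it}L_\phi(\xi)^\ast\U_t^\ast\U_t L_\phi(\eta)\Delta_\phi^{-it}=\Delta_\phi^{it}\bigl(L_\phi(\xi)^\ast L_\phi(\eta)\bigr)\Delta_\phi^{-it}$, and since $L_\phi(\xi)^\ast L_\phi(\eta)\in\M$ (as recalled in the preliminaries, using that $\M$ is in standard form), this element is exactly $\sigma^\phi_t(L_\phi(\xi)^\ast L_\phi(\eta))$ by definition of the modular group, which is the claimed identity.

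The main obstacle I anticipate is purely bookkeeping with the three competing identifications on $L_2(\M,\phi)$: the two embeddings $\Lambda_\phi$ and $\Lambda_\phi'$, the action of $J_\phi$ relating them, and the formula $\zeta\cdot y=J_\phi y^\ast J_\phi\zeta$ for the right $\M$-action. One has to make sure the ``right-boundedness with respect to $\phi$'' in the definition (stated via $\norm{x\xi}\le C\norm{x\phi^{1/2}}$, i.e. via $\Lambda_\phi$ on the left) matches up correctly after applying $\J$, which swaps left and right and also conjugates scalars; getting the adjoints and the order of multiplication right in property (a) — it is an \emph{anti}-isomorphism — is where a sign or an adjoint could slip. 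Everything else is a routine density argument checking equality of bounded operators on $\{x\phi^{1/2}\}$ or $\{\phi^{1/2}y\}$. No deep input beyond Tomita--Takesaki theory and the axioms (a)--(c) is needed.
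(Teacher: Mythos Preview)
Your proposal is correct and follows exactly the same approach as the paper's own proof, which is extremely terse: it simply notes that $x\J\xi=\J(\xi x^\ast)$ for $x\in\n_\phi\cap\n_\phi^\ast$ together with $\phi^{1/2}x^\ast=J_\phi(x\phi^{1/2})$ gives the claims about $\J$, and that the claims about $(\U_t)$ are ``a direct consequence of (b)''. You have unpacked precisely these computations, including the intermediate operator identity $L_\phi(\U_t\xi)=\U_t L_\phi(\xi)\Delta_\phi^{-it}$ that makes the last formula transparent.
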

\begin{proof}
For $x\in\n_\phi\cap\n_\phi^\ast$ we have $x\J\xi=\J(\xi x^\ast)$. Since $\J$ is isometric and $\phi^{1/2} x^\ast=J(x\phi^{1/2})$, both statements for the conjugation operator follow. The statements for the group $(\U_t)$ are a direct consequence of (b).
\end{proof}

As discussed above, every normal Tomita bimodule gives rise to a Tomita correspondence. Conversely, every Tomita correspondence contains a Tomita bimodule, as we show in the next proposition. Note however that these two operations are in general not inverse to each other.

\begin{proposition}\label{prop:bounded_Tomita_bimodule}
Let $\M$ be a von Neumann algebra and $\phi$ a normal semi-finite faithful weight on $\M$. Let $\H$ be a Tomita $\M$-$\M$-correspondence and let $\H_0$ be the set of all vectors $\xi$ in $\H$ for which
\begin{equation*}
\IR\to \H,\,t\mapsto\U_t\xi
\end{equation*}
has an entire continuation $z\mapsto \U_z \xi$ and $\U_z\xi$ is bounded for all $z\in\IC$. Then $\H_0$ is dense in $\H$ and equipped with the restriction of $\J$ and $(\U_z)$ and the left and right actions
\begin{align*}
L&\colon\AA_\phi\to B(\H),\,L(a)\xi=\pi_l(a)\cdot \xi,\\
R&\colon\AA_\phi\to B(\H),\,R(a)\xi=\xi\cdot J\pi_r(a)^\ast J
\end{align*}
it forms a normal Tomita bimodule over $\AA_\phi$.
\end{proposition}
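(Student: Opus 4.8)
The plan is to verify, one by one, the structural requirements: density of $\H_0$, that $L$ and $R$ are well-defined commuting $\ast$-representations landing in $B(\H_0)$, the compatibility axioms (a)--(f) of \Cref{def:Tomita_bimodule}, and finally normality. First I would establish density. Since $(\U_t)_{t\in\IR}$ is a strongly continuous unitary group, write $\U_t=A^{it}$ for an injective positive self-adjoint $A$; the standard smoothing/mollifier argument (integrate $\U_t\xi$ against a Gaussian $\sqrt{n/\pi}\,e^{-nt^2}$) produces, for every $\xi$ in a dense set, a net of entire-analytic vectors $\xi_n\to\xi$ whose analytic continuations $\U_z\xi_n$ are given by the same Gaussian integral. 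The one extra point beyond the classical Tomita-algebra computation (cf.\ \cite[Lemma VI.2.3]{Tak03}) is that these mollified vectors are \emph{bounded}: starting from a left-bounded $\xi$ with $\U_z$-orbit controlled — e.g.\ $\xi=L_\phi(\zeta)^\ast(\cdot)$-type vectors, or more simply any $x\phi^{1/2}\mapsto$ something — one uses \Cref{lem:boundedness_Tomita_correspondence}, which gives $L_\phi(\U_t\xi)^\ast L_\phi(\U_t\xi)=\sigma^\phi_t(L_\phi(\xi)^\ast L_\phi(\xi))$, so $\norm{L_\phi(\U_t\xi)}=\norm{L_\phi(\xi)}$ is uniformly bounded in $t$; hence the Bochner integral defining $\U_z\xi_n$ is again left-bounded with a uniform bound, and symmetrically right-bounded via $\J$. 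Thus $\H_0$ is dense.

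Next I would check the algebraic structure. That $L(a)\xi=\pi_l(a)\xi$ and $R(a)\xi=\xi\cdot J\pi_r(a)^\ast J$ commute is immediate from the correspondence structure (left and right actions commute). That they preserve $\H_0$ follows from axiom (b) of a Tomita correspondence: $\U_z(\pi_l(a)\xi)=\sigma^\phi_z(\pi_l(a))(\U_z\xi)=\pi_l(U_z a)(\U_z\xi)$, using that $\sigma^\phi_z$ restricted to $\AA_\phi$ is $U_z$, and analyticity of $z\mapsto U_z a$ on the Tomita algebra; boundedness of $\U_z(L(a)\xi)$ follows since $\pi_l(U_za)$ is a bounded operator times a bounded vector, and the right-boundedness is handled via $\J$. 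The $\ast$-representation property with respect to $^\sharp$ resp.\ $^\flat$ is the identity $\langle L(a)\xi,\eta\rangle=\langle\xi,L(a^\sharp)\eta\rangle$, which reduces to $\pi_l(a)^\ast=\pi_l(a^\sharp)$ on $L_2(\M,\phi)$ and the fact that on the correspondence the adjoint of left-multiplication by $\pi_l(a)$ is left-multiplication by $\pi_l(a)^\ast$; similarly for $R$ one uses $(\xi\cdot b)$ having adjoint $(\xi\cdot b^\ast)$ and the description of $^\flat$ via $U_{-i}$. Non-degeneracy follows because $\AA_\phi$ acts non-degenerately on $L_2(\M,\phi)$ hence on $\H$, together with density of $\H_0$. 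Bound (a), $\norm{L(a)}\leq\norm{\pi_l(a)}$, is automatic since $L(a)$ is (the restriction of) the operator of left multiplication by $\pi_l(a)$, which has norm $\norm{\pi_l(a)}$; likewise $\norm{R(a)}\leq\norm{J\pi_r(a)^\ast J}=\norm{\pi_r(a)}$.

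For the compatibility axioms (b)--(f): (b) $\J L(a)=R(Ja)\J$ is a direct rewriting of Tomita-correspondence axiom (a), $\J(x\xi)=(\J\xi)x^\ast$, together with the identity $J\pi_r((\,\cdot\,)^\sharp)^\ast J=\pi_l(\,\cdot\,)$ relating the modular conjugations — i.e.\ one checks $\J(\pi_l(a)\xi)=(\J\xi)\cdot\pi_l(a)^\ast$ equals $R(Ja)\J\xi=(\J\xi)\cdot J\pi_r((Ja))^\ast J$, reducing to a statement about $J$ and the left/right Hilbert algebra which is standard Tomita--Takesaki. Axioms (c), (d), (f) are inherited verbatim from the unitary group $(\U_t)$ on the correspondence: (c) analyticity holds \emph{for $\xi,\eta\in\H_0$} by definition of $\H_0$ (note the statement in \Cref{def:Tomita_bimodule}(c) is quantified over the pre-Hilbert space $\H_0$, not its completion); (d) $\langle\xi,\U_z\eta\rangle=\langle\U_{-\bar z}\xi,\eta\rangle$ follows from $\U_t$ unitary by analytic continuation in $z$ once both sides are entire; (f) $\U_z\J=\J\U_{\bar z}$ is the continuation of correspondence axiom (c). Axiom (e), $\U_zL(a)\U_{-z}=L(U_za)$, is exactly the computation $\U_z(\pi_l(a)(\U_{-z}\xi))=\sigma^\phi_z(\pi_l(a))\xi=\pi_l(U_za)\xi$ noted above, valid on $\H_0$. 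Finally, normality: by construction $L$ is the restriction to $\AA_\phi$ of the genuine normal left representation $\pi_l(\AA)^{\prime\prime}\to B(\overline{\H})$ coming from the $\M$-$\M$-correspondence structure (identifying $\pi_l(\AA)^{\prime\prime}$ with $\pi_\phi(\M)\cong\M$), so $\pi_l(a)\mapsto L(a)$ extends to that normal representation $\tilde L$; similarly $\tilde R$ exists, so $\H_0$ is a normal Tomita bimodule over $\AA_\phi$.

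The main obstacle I expect is the density step — specifically, showing the mollified analytic vectors remain \emph{bounded} (both left- and right-bounded) with the analytic continuation $\U_z$ applied, and identifying a sufficiently large starting dense set of bounded vectors on which the Gaussian integral converges in the appropriate sense. The operator-norm invariance $\norm{L_\phi(\U_t\xi)}=\norm{L_\phi(\xi)}$ from \Cref{lem:boundedness_Tomita_correspondence} is the crucial quantitative input that makes the Bochner integral $\int \sqrt{n/\pi}\,e^{-n(t-z)^2}\U_t\xi\,dt$ yield a left-bounded vector with a uniform bound; everything else is a routine transcription of the Tomita-algebra arguments in \cite[Ch.~VI]{Tak03} to the bimodule setting.
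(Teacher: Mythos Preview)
Your approach matches the paper's proof closely: Gaussian mollification for density, the invariance $\norm{L_\phi(\U_t\xi)}=\norm{L_\phi(\xi)}$ from \Cref{lem:boundedness_Tomita_correspondence} as the key estimate, and direct verification of axioms (a)--(f). Two small gaps remain.

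First, the starting dense set of bounded vectors: you flag this as an obstacle but do not resolve it. The paper invokes \cite[Theorem 1]{OOT17}, which asserts that in any $\M$-$\M$-correspondence the bounded vectors are dense. This is not elementary and needs a citation or argument.

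Second, your claim that right-boundedness of $L(a)\xi$ ``is handled via $\J$'' is circular. Applying $\J$ converts the question to whether $R(Ja)(\J\xi)$ is \emph{left}-bounded, i.e., whether right-multiplying a bounded vector by an element of $\M$ yields a left-bounded vector --- exactly the mirror of what you started with. Neither direction follows from ``bounded operator times bounded vector'': for general $m\in\M$, the vector $m\xi$ is left-bounded but need not be right-bounded, since $\norm{(xm)\phi^{1/2}}$ is not controlled by $\norm{x\phi^{1/2}}$. The paper closes this with a direct estimate using that $a\in\AA_\phi$ is analytic:
\begin{equation*}
\norm{x\cdot L(a)\xi}=\norm{(x\pi_l(a))\cdot\xi}\leq\norm{R_\phi(\xi)}\,\norm{x\pi_l(a)\phi^{1/2}}\leq\norm{R_\phi(\xi)}\,\norm{\pi_l(U_{i/2}a)}\,\norm{x\phi^{1/2}},
\end{equation*}
the last inequality being a KMS-type bound available precisely because $U_{i/2}a$ exists. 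Once you insert this estimate (and its analogue for $R$), your outline is complete and coincides with the paper's argument.
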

\begin{proof}
By \cite[Theorem 1]{OOT17} the set of bounded vectors is dense in $\H$. For a bounded vector $\xi\in\H$ let
\begin{equation*}
\xi_n=\sqrt{\frac n \pi}\int_\IR e^{-n t^2}\U_t\xi\,dt.
\end{equation*}
By \cite[Lemma VI.2.4]{Tak03} we have $\xi_n\to\xi$. Moreover, it is not hard to see that
\begin{equation*}
\IC\to \H,\,z\mapsto\U_z \xi=\sqrt{\frac n \pi}\int_\IR e^{-n(t-z)^2}\U_t\xi\,dt
\end{equation*}
is an entire continuation of $t\mapsto \U_t \xi_n$ (compare \cite[Theorem VI.2.2]{Tak03}).

Furthermore,
\begin{align*}
\norm{(\U_z\xi_n) x}&\leq \sqrt{\frac n\pi}\int_\IR e^{-n(t-\Re z)^2}e^{n(\Im z)^2}\norm{(\U_t \xi)x}\,dt\\
&=\sqrt{\frac n\pi}e^{n(\Im z)^2}\int_\IR e^{-n (t-\Re z)^2}\norm{\U_t(\xi \sigma^\phi_{-t}(x))}\,dt\\
&\leq \sqrt{\frac n\pi}e^{n(\Im z)^2}\int_\IR e^{-n(t-\Re z)^2}\norm{L_\phi(\xi)}\norm{\phi^{1/2}\sigma^\phi_{-t}(x)}\,dt\\
&=e^{n(\Im z)^2}\norm{L_\phi(\xi)}\norm{\phi^{1/2}x}
\end{align*}
so that $\U_z \xi_n$ is left-bounded. An analog computation shows that $\U_z \xi_n$ is right-bounded as well. This establishes the density of $\H_0$.

If $a\in\AA_\phi$ and $\xi\in \H$ is bounded, then $L(a)\xi=\pi_l(a)\cdot\xi$ is left-bounded. Moreover, if $x\in \n_\phi$, then $x\pi_l(a)\in \n_\phi$ and
\begin{align*}
\norm{x\cdot L(a)\xi}&=\norm{x\cdot \pi_l(a)\xi}\\
&\leq \norm{R_\phi(\xi)}\norm{x\pi_l(a)\phi^{1/2}}\\
&\leq \norm{R_\phi(\xi)}\norm{\pi_l(U_{i/2}(a))}\norm{x\phi^{1/2}}.
\end{align*}
Thus $L(a)\xi$ is right-bounded, as well. Similarly one shows that $R(a)\xi$ is bounded.

Furthermore, if $z\mapsto \U_z\xi$ is an entire analytic continuation of $t\mapsto \U_t \xi$, then $z\mapsto L(U_z a) R(U_z b)\U_z\xi$ is an entire analytic continuation of $t\mapsto \U_t(L(a)R(b)\xi)$. Therefore $\H_0$ is an $\AA_\phi$-bimodule with the left and right actions $L$ and $R$. Properties (a)--(f) of a Tomita bimodule now follow from the properties of a Tomita correspondence together with the uniqueness of analytic continuations.
\end{proof}

\begin{remark}
If $\xi,\eta\in \H$ are bounded vectors such that $t\mapsto \U_t\xi$ and $t\mapsto \U_t\eta$ have entire analytic continuations with $\U_z\xi$, $\U_z\eta$ bounded for all $z\in\IC$, then it follows from \Cref{lem:boundedness_Tomita_correspondence} that $L_\phi(\xi)^\ast L_\phi(\eta)$ is entire analytic for $\sigma^\phi$ and 
\begin{equation*}
\sigma^\phi_z(L_\phi(\xi)^\ast L_\phi(\eta))=L_\phi(\U_{\bar z}\xi)^\ast L_\phi(\U_z\eta)
\end{equation*}
for all $z\in\IC$.
\end{remark}

\subsection{The Fock space construction}\label{subsec:Fock_construction}

Let $\H$ be a Tomita $\M$-$\M$-correspondence. We write $\H_a$ for the set of all $\xi\in \H$ (not necessarily bounded) for which the map $t\mapsto\U_t\xi$ has an entire continuation. On $\H_a$ we define involutions $\mathcal S_0$ and $\F_0$ by $\mathcal S_0\xi=\J\U_{-i/2}\xi$ and $\F_0\xi=\J\U_{i/2}\xi$. These maps are densely defined in $\overline\H$ by \Cref{prop:bounded_Tomita_bimodule} and closable by \Cref{lem:mod_group_closable}. We denote their closures by $\mathcal S$ and $\F$, respectively. In analogy to Tomita algebras, we also write $\xi^\sharp$ for $\mathcal S\xi$ and $\xi^\flat$ for $\mathcal F \xi$.

The Fock space over $\H$ is defined as
\begin{equation*}
\F_\M(\H)=L_2(\M,\phi)\oplus\bigoplus_{n=1}^\infty \H^{\otimes_\phi n},
\end{equation*}
where $\otimes_\phi$ denotes the relative tensor product of correspondences.

For a left-bounded vector $\xi\in\H$ define the operator $a_0(\xi)$ on the algebraic direct sum by
\begin{align*}
a_0(\xi)\phi^{1/2}x&=\xi x,\\
a_0(\xi)(\xi_1\otimes_\phi\dots\otimes_\phi\xi_n)&=\xi\otimes_\phi\xi_1\otimes_\phi\dots\otimes_\phi\xi_n.
\end{align*}
By the definition of the relative tensor product, $a_0(\xi)$ is bounded with norm $\norm{L_\phi(\xi)}$. Let $a(\xi)$ denote the bounded extension of $a_0(\xi)$ to $\F_\M(\H)$ and $s(\xi)=a(\xi)+a(\xi)^\ast$.

Likewise, for a right-bounded vector $\xi\in\H$ define the operator $b_0(\xi)$ on the algebraic direct sum by
\begin{align*}
b_0(\xi)x\phi^{1/2}&=x\xi,\\
b_0(\xi)(\xi_1\otimes_\phi\dots\otimes_\phi\xi_n)&=\xi_1\otimes_\phi\dots\otimes_\phi\xi_n\otimes_\phi\xi.
\end{align*}
Again, $b_0(\xi)$ is bounded with norm $\norm{R_\phi(\xi)}$. Let $b(\xi)$ denote the bounded extension of $b_0(\xi)$ to $\F_\M(\H)$ and $t(\xi)=b(\xi)+b(\xi)^\ast$.

\begin{lemma}\label{lem:commutant_Gaussian_functor}
If $\xi\in \H$ is left-bounded with $\mathcal S\xi=\xi$ and $\eta\in \H$ is right-bounded with $\mathcal F\eta=\eta$, then $s(\xi)$ and $t(\eta)$ commute.
\end{lemma}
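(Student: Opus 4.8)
The plan is to verify that $[s(\xi),t(\eta)]=0$ on the algebraic Fock space by decomposing each operator into its creation and annihilation parts and checking the four resulting commutator relations separately. Write $s(\xi)=a(\xi)+a(\xi)^\ast$ and $t(\eta)=b(\eta)+b(\eta)^\ast$. Since $a(\xi)$ creates on the \emph{left} of a tensor word while $b(\eta)$ creates on the \emph{right}, the ``creation--creation'' commutator $[a(\xi),b(\eta)]$ vanishes immediately on vectors of length $\geq 1$: both orders produce $\xi\otimes_\phi(\cdots)\otimes_\phi\eta$. On the length-zero component one has to compare $a(\xi)b(\eta)\phi^{1/2}x = a(\xi)(x\eta)$ and $b(\eta)a(\xi)\phi^{1/2}x=b(\eta)(\xi x)$; using the defining formulas for $a_0,b_0$ and the relative tensor product $L_2(\M,\phi)\otimes_\phi\H\cong\H$, $\H\otimes_\phi L_2(\M,\phi)\cong\H$, both expressions equal $\xi\otimes_\phi(\text{the vector }x\eta)$ up to the canonical identifications, so they agree. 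Taking adjoints, $[a(\xi)^\ast,b(\eta)^\ast]=0$ as well.

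The substantive part is the ``creation--annihilation'' cross term $[a(\xi),b(\eta)^\ast]+[a(\xi)^\ast,b(\eta)]$. First I would compute $b(\eta)^\ast$ explicitly: using the formula $\langle\xi_1\otimes_\phi\eta_1,\xi_2\otimes_\phi\eta_2\rangle = \langle\xi_1,\xi_2\cdot J_\phi R_\phi(\eta_2)^\ast R_\phi(\eta_1)J_\phi\rangle$ from the second description of the relative tensor product, one finds that $b(\eta)^\ast$ acts on $\xi_1\otimes_\phi\cdots\otimes_\phi\xi_n$ by contracting the \emph{last} leg against $\eta$, producing $\xi_1\otimes_\phi\cdots\otimes_\phi\xi_{n-1}\cdot\big(\text{right-module coefficient built from }R_\phi(\eta)^\ast R_\phi(\xi_n)\big)$, and it maps a one-particle vector $\xi_1$ into $L_2(\M,\phi)$. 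Symmetrically $a(\xi)^\ast$ contracts the \emph{first} leg against $\xi$ via $L_\phi(\xi)^\ast L_\phi(\cdot)$. Because one contraction touches only the leftmost tensor leg and the other only the rightmost, the two compositions $a(\xi)b(\eta)^\ast$ and $b(\eta)^\ast a(\xi)$ agree on words of length $\geq 2$ by associativity of $\otimes_\phi$. Likewise $[a(\xi)^\ast,b(\eta)]=0$ on long words. On the short words (length $0$ and $1$), where the two ``ends'' collide, the cross terms no longer cancel individually, but I expect the scalar mismatch to be absorbed exactly when we add $[a(\xi),b(\eta)^\ast]$ and $[a(\xi)^\ast,b(\eta)]$: here is where the hypotheses $\mathcal S\xi=\xi$ and $\mathcal F\eta=\eta$ enter. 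Concretely, on $\phi^{1/2}x$ the commutator $[a(\xi)^\ast,b(\eta)]$ produces a term of the form $\langle\text{something involving }\J,\U_{\pm i/2}\rangle$, and the identities $\J\U_{-i/2}\xi=\xi$, $\J\U_{i/2}\eta=\eta$ (equivalently $\xi^\sharp=\xi$, $\eta^\flat=\eta$) together with Lemma~\ref{lem:boundedness_Tomita_correspondence} (relating $L_\phi$ of $\U_t$-translates to $\sigma^\phi_t$) let me rewrite $L_\phi(\xi)^\ast$ in terms of $R_\phi(\J\xi)=\J L_\phi(\xi)J$ and match it against the $R_\phi(\eta)$ contraction coming from the other cross term.

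I would organize the short-word computation as follows: evaluate $\langle s(\xi)t(\eta)\zeta,\omega\rangle$ and $\langle t(\eta)s(\xi)\zeta,\omega\rangle$ for $\zeta,\omega$ ranging over $\phi^{1/2}\n_\phi$ and over bounded one-particle vectors, expand both sides into the four bilinear pieces, discard the three pieces already shown to match, and reduce the remaining identity to an equality of inner products in $L_2(\M,\phi)$ of the shape $\langle\xi\, x,\eta\, y\rangle=\langle \mathcal S\xi\cdot(\text{coeff}),\dots\rangle$; then invoke $\mathcal S\xi=\xi$, $\mathcal F\eta=\eta$ and the defining relations of the $\M$-$\M$-correspondence (in particular $\xi\cdot y=J_\phi y^\ast J_\phi\xi$ and the adjoint relations for $L_\phi,R_\phi$) to close it. The main obstacle, and the only place where real care is needed, is getting the modular bookkeeping right in this last step — tracking how $J_\phi$, $\Delta_\phi^{1/2}$ (hidden inside $\mathcal S_0=\J\U_{-i/2}$, $\F_0=\J\U_{i/2}$) and the left/right $C^\ast$-coefficients $L_\phi(\xi)^\ast L_\phi(\cdot)\in\M$ versus $R_\phi(\eta)^\ast R_\phi(\cdot)\in\M'$ interact when the left and right contractions act on the same leg. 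Once the short-word identity is verified, the commutator $[s(\xi),t(\eta)]$ vanishes on the dense algebraic Fock space, and since $s(\xi)$ and $t(\eta)$ are bounded self-adjoint operators, it vanishes everywhere.
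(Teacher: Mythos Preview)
Your plan is along the right lines and matches the paper's strategy: the commutator is trivially zero on $\H^{\otimes_\phi n}$ for $n\geq 1$ because the left and right creation/annihilation touch disjoint legs, and the only genuine computation is on the vacuum component $L_2(\M,\phi)$, where the fixed-point hypotheses $\mathcal S\xi=\xi$, $\mathcal F\eta=\eta$ enter. Two technical points deserve attention, however.

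First, you never reduce to analytic vectors. The identities $\J\U_{-i/2}\xi=\xi$ and $\J\U_{i/2}\eta=\eta$ only make sense once $\xi,\eta\in\H_a$; for general $\xi\in D(\mathcal S)$ one only has the closure $\mathcal S\xi=\xi$. The paper begins by observing that $D(\mathcal S)\cap\H_a$ and $D(\F)\cap\H_a$ are cores for $\mathcal S$ and $\F$, so one may assume $\xi,\eta\in\H_a$; since $a(\xi),b(\eta)$ depend continuously on $\xi,\eta$, this reduction is harmless but necessary. Similarly, the vacuum computation should be carried out for $x\in\n_\phi\cap\n_\phi^\ast$ entire analytic for $\sigma^\phi$, so that the passage between the left and right embeddings $\phi^{1/2}x=\sigma^\phi_{-i/2}(x)\phi^{1/2}$ is available.

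Second, your claim that $[a(\xi),b(\eta)]=0$ on $\phi^{1/2}x$ ``up to canonical identifications'' hides a modular twist: one has $b(\eta)\phi^{1/2}x=\sigma^\phi_{-i/2}(x)\eta$ (not $x\eta$), so the identity to be checked is $\xi x\otimes_\phi\eta=\xi\otimes_\phi\sigma^\phi_{-i/2}(x)\eta$, which is the balancing relation for the Connes relative tensor product over a weight. This is true but not quite the naive statement. Rather than splitting into four pieces and tracking these twists separately, the paper simply computes $t(\eta)s(\xi)\phi^{1/2}x$ in one chain, rewriting $R_\phi(\eta)^\ast$ as $JL_\phi(\J\eta)^\ast J$ via Lemma~\ref{lem:boundedness_Tomita_correspondence} and then using $\U_{i/2}\J\xi=\xi$, $\U_{-i/2}\J\eta=\eta$ to match the result with $s(\xi)t(\eta)\phi^{1/2}x$. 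That direct route avoids having to verify the four commutators individually and keeps all the modular bookkeeping in a single line of equalities.
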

\begin{proof}
Since $D(\mathcal S)\cap \H_a$ and $D(\F)\cap \H_a$ are cores for $\mathcal S$ and $\F$, respectively, we may assume $\xi,\eta\in \H_a$.

If $\xi$ is left-bounded, then $\phi(L_\phi(\xi)^\ast L_\phi(\xi))<\infty$ by \cite[Lemma IX.3.3]{Tak03}. Hence, if $\xi,\eta$ are left-bounded, then
\begin{equation*}
\phi(L_\phi(\eta)^\ast L_\phi(\xi) L_\phi(\xi)^\ast L_\phi(\eta))\leq \norm{L_\phi(\xi)}^2\phi(L_\phi(\eta)^\ast L_\phi(\eta))<\infty.
\end{equation*}
Thus $L_\phi(\xi)^\ast L_\phi(\eta)\in\n_\phi$. Similarly one shows $L_\phi(\xi)^\ast L_\phi(\eta)\in \n_\phi^\ast$. If $(y_n)$ is a sequence of self-adjoint elements in $\n_\phi^\ast$ that converges strongly to $1$, then
\begin{align*}
L_\phi(\xi)^\ast L_\phi(\eta)\phi^{1/2}=\lim_{n\to\infty}L_\phi(\xi)^\ast L_\phi(\eta)\phi^{1/2} y_n=\lim_{n\to\infty}L_\phi(\xi)^\ast \eta y_n=L_\phi(\xi)^\ast \eta.
\end{align*}

If $x\in\n_\phi\cap\n_\phi^\ast$ is entire analytic for $\sigma^\phi$, then
\begin{align*}
t(\eta) s(\xi)\phi^{1/2}x&=t(\eta)\xi x\\
&=\xi x\otimes_\phi\eta+R_\phi(\eta)^\ast \xi x\\
&=\xi \otimes_\phi\sigma^\phi_{-i/2}(x)\eta+J L_\phi(\J\eta)^\ast \J (\xi x)\\
&=a(\xi)t(\eta)\sigma^\phi_{-i/2}(x)\phi^{1/2}+J L_\phi(\J\eta)^\ast x^\ast L_\phi(\J\xi)\phi^{1/2}\\
&=a(\xi)t(\eta)\sigma^\phi_{-i/2}(x)\phi^{1/2}+\phi^{1/2}(\J\xi|x \J\eta)\\
&=a(\xi)t(\eta)\sigma^\phi_{-i/2}(x)\phi^{1/2}+\sigma_{-i/2}^\phi((\J\xi|(x\J\eta))\phi^{1/2}\\
&=a(\xi)t(\eta)\sigma^\phi_{-i/2}(x)\phi^{1/2}+(\U_{i/2}\J\xi|\sigma^\phi_{-i/2}(x)\U_{-i/2}\J\eta)\phi^{1/2}\\
&=a(\xi)t(\eta)\sigma^\phi_{-i/2}(x)\phi^{1/2}+a(\xi)^\ast \sigma^\phi_{-i/2}(x)L(\eta)\phi^{1/2}\\
&=a(\xi)t(\eta)\sigma^\phi_{-i/2}(x)\phi^{1/2}+a(\xi)^\ast t(\eta)\sigma^\phi_{-i/2}(x)\phi^{1/2}\\
&=s(\xi)t(\eta)\phi^{1/2}x.
\end{align*}
By approximation, it follows that $t(\eta)$ and $s(\xi)$ commute on $L_2(\M,\phi)$. Commutation on $\H^{\otimes_\phi n}$ for $n\geq 1$ is easy to see.
\end{proof}

The Fock space $\F_\M(\H)$ inherits the structure of an $\M$-$\M$-correspondence from $\H$

\begin{definition}\label{def:Gaussian_functor}
The free Gaussian algebra $\Gamma_\M(\H)$ over $\H$ is the von Neumann algebra generated by the left action of $\M$ on $\F_\M(\H)$ and $\{s(\xi)\mid \xi\in L_\infty(\H_\M,\phi)\cap D(\mathcal S),\,\mathcal S\xi=\xi\}$.
\end{definition}

\begin{example}
A Tomita $\IC$-$\IC$-correspondence is a Hilbert space $\H$ with an anti-unitary involution $\J$ and a strongly continuous unitary group $(\U_t)$ on $\H$ that commutes with $\J$. The set of $\J$-real elements, i.e. the vectors $\xi\in \H$ with $\J\xi=\xi$, form a real Hilbert space $H$, and $(\U_t)$ restricts to a strongly continuous  orthogonal group $(V_t)$ on $H$. The resulting free Gaussian algebra $\Gamma_\M(\H)$ is the free Araki--Woods factor $\Gamma(H,(V_{-t}))^{\prime\prime}$ from \cite{Shl97}. See Subsection \ref{subsec:Araki-Woods} for a more detailed discussion.
\end{example}

Let $I\colon L_2(\M,\phi)\to \F_\M(\H)$ be the inclusion map. Since $\Gamma_\M(\H)$ consists of right module maps, $I^\ast x I\in \M$ for every $x\in \Gamma_\M(\H)$. Let
\begin{align*}
E&\colon \Gamma_\M(\H)\to\M,\,x\mapsto I^\ast x I,\\
\hat\phi&\colon \Gamma_\M(\H)_+\to \IC,\,\hat \phi(x)=\phi(E(x)).
\end{align*}
Clearly, $E$ is a normal conditional expectation and $\hat\phi$ is a normal weight. We will show next that $\hat \phi$ is semi-finite and faithful and identify its semi-cyclic representation. To do so, we need the following fact about bimodules.

\begin{lemma}[{\cite[Lemma IX.3.3]{Tak03}}]\label{lem:bdd_vec_module_map}
If $(\N,\psi)$ is a weighted von Neumann algebra and $\mathcal K$ an $\N$-$\N$-correspondence, then the map
\begin{equation*}
L_\psi\colon L_\infty(\mathcal K_\N,\psi)\to \{x\in \L(L_2(\N,\psi)_\N,\mathcal K_\N)\mid \psi(x^\ast x)<\infty\}
\end{equation*}
is bijective and $L_\psi^{-1}(xy)=x L_\psi^{-1}(y)$ for $x,y\in \L(L_2(\N,\psi)_\N,\mathcal K_\N)$ with $\psi(y^\ast y)<\infty$.
\end{lemma}

\begin{lemma}
If $\xi\in \H$ is left-bounded, then $\phi(L_\phi(\xi)^\ast L_\phi(\xi))=\norm{\xi}^2$.
\end{lemma}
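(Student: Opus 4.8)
The plan is to deduce the identity from the elementary relation $\phi(b^\ast b)=\norm{\Lambda_\phi(b)}^2$ for $b\in\n_\phi$, by approximating $\xi$ from the right with a bounded approximate unit. First I would set $a=L_\phi(\xi)^\ast L_\phi(\xi)=(\xi|\xi)$, which is an element of $\M$ because $L_\phi(\xi)$ is a right module map. By \Cref{lem:bdd_vec_module_map} we have $a\in\mathfrak p_\phi$; hence $a^{1/2}\in\n_\phi$ and $\phi(a)=\norm{\Lambda_\phi(a^{1/2})}^2$. It therefore suffices to prove $\norm{\xi}=\norm{\Lambda_\phi(a^{1/2})}$.

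Next I would fix a net $(u_j)$ in $\n_\phi\cap\n_\phi^\ast$ with $\norm{u_j}\leq 1$, $\Lambda_\phi(u_j)\in\AA_\phi$ for every $j$, and $u_j\to 1$ $\sigma$-strongly$^\ast$; such a bounded approximate unit exists since $\AA_\phi$ is a dense Tomita algebra. As the right action of $\M$ on $\H$ is a normal $\ast$-representation, $\xi u_j=\xi\cdot u_j\to\xi$ in $\H$ and $\norm{\xi u_j}\leq\norm{\xi}$, so $\norm{\xi}^2=\lim_j\norm{\xi u_j}^2$. By the defining property of $L_\phi(\xi)$ one has $\xi u_j=L_\phi(\xi)\Lambda_\phi^\prime(u_j)$, whence
\begin{equation*}
\norm{\xi u_j}^2=\langle a\,\Lambda_\phi^\prime(u_j),\Lambda_\phi^\prime(u_j)\rangle=\norm{a^{1/2}\Lambda_\phi^\prime(u_j)}^2.
\end{equation*}
The crux is the identity $a^{1/2}\Lambda_\phi^\prime(u_j)=\Lambda_\phi(a^{1/2})\cdot u_j$, which is just the associativity $a^{1/2}(\phi^{1/2}u_j)=(a^{1/2}\phi^{1/2})u_j$ of the left and right actions on $L_2(\M,\phi)$. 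It can be verified directly: since $\Lambda_\phi(u_j)\in\AA_\phi$ we have $\Lambda_\phi^\prime(u_j)=\Delta_\phi^{1/2}\Lambda_\phi(u_j)=\Lambda_\phi(\sigma^\phi_{-i/2}(u_j))$, so the left-hand side equals $\Lambda_\phi(a^{1/2}\sigma^\phi_{-i/2}(u_j))$, and the standard identity $\Lambda_\phi(x)\cdot y=\Lambda_\phi(x\,\sigma^\phi_{-i/2}(y))$ (for $x\in\n_\phi$ and $y$ entire analytic for $\sigma^\phi$) gives the same value for the right-hand side. Invoking once more that the right action of $\M$ on $L_2(\M,\phi)$ is normal, $\Lambda_\phi(a^{1/2})\cdot u_j\to\Lambda_\phi(a^{1/2})$, and therefore
\begin{equation*}
\norm{\xi}^2=\lim_j\norm{\Lambda_\phi(a^{1/2})\cdot u_j}^2=\norm{\Lambda_\phi(a^{1/2})}^2=\phi(a)=\phi(L_\phi(\xi)^\ast L_\phi(\xi)).
\end{equation*}

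The only external input is \Cref{lem:bdd_vec_module_map}, needed precisely to guarantee $a\in\mathfrak p_\phi$ so that $\Lambda_\phi(a^{1/2})$ makes sense; the remaining steps are routine. The one point that needs attention is running the approximation simultaneously on both sides — that $\xi u_j\to\xi$ in $\H$ and that $\Lambda_\phi(a^{1/2})u_j\to\Lambda_\phi(a^{1/2})$ in $L_2(\M,\phi)$ — which is why a single bounded approximate unit is used and the normality of the two right representations is exploited, together with the bookkeeping involving the symbol $\phi^{1/2}u_j=\Lambda_\phi^\prime(u_j)$ and the modular group in the key identity.
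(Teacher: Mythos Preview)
Your argument is correct. The paper's proof is shorter and takes a different route: it invokes the factorization aspect of \Cref{lem:bdd_vec_module_map} (Takesaki's Lemma IX.3.3), writing $\xi=x(y\phi^{1/2})$ with $x\colon L_2(\M,\phi)\to\H$ a bounded right module map and $y\in\n_\phi$, so that $L_\phi(\xi)=xy$ and
\[
\norm{\xi}^2=\langle y\phi^{1/2},x^\ast x\,y\phi^{1/2}\rangle=\phi(y^\ast x^\ast xy)=\phi(L_\phi(\xi)^\ast L_\phi(\xi))
\]
in one line. You use only the finiteness consequence $\phi(a)<\infty$ of the same lemma and then run an approximate-unit argument, transferring the computation to $L_2(\M,\phi)$ via the identity $a^{1/2}\Lambda_\phi'(u_j)=\Lambda_\phi(a^{1/2})\cdot u_j$. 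The paper's approach is more economical; yours is more hands-on and avoids the polar-type factorization, at the price of having to track the modular bookkeeping and the normality of both right actions.
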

\begin{proof}
Let $\xi\in \H$ be left-bounded. By [TakII, Lemma IX.3.3] there exist a bounded right module map $x\colon L_2(\M)\to\H$ and $y\in \n_\phi$ such that $\xi=xy\phi^{1/2}$. Consequently $L_\phi(\xi)=xy$ and we obtain
\begin{equation*}
\norm{\xi}^2=\norm{xy\phi^{1/2}}^2=\langle y\phi^{1/2},x^\ast x y\phi^{1/2}\rangle=\phi(y^\ast x^\ast x y)=\phi(L_\phi(\xi)^\ast L_\phi(\xi)).\qedhere
\end{equation*}
\end{proof}

\begin{theorem}\label{thm:Fock_weight}
The weight $\hat\phi$ is semi-finite and faithful and the associated semi-cyclic representation is given by $H=\F_\M(\H)$, $\pi=\mathrm{id}$ and $\Lambda(x)=L_\phi^{-1}(xI)$ for $x\in \n_{\hat\phi}$. In particular, the conditional expectation $E$ is faithful.
\end{theorem}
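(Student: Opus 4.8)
The strategy is to verify the three defining properties of the associated semi-cyclic representation for the triple $(\F_\M(\H),\mathrm{id},\Lambda)$, where $\Lambda(x)=L_\phi^{-1}(xI)$, and then invoke the essential uniqueness of semi-cyclic representations stated in \Cref{sec:prelim}. First I would observe that for $x\in \Gamma_\M(\H)$, the operator $xI\colon L_2(\M,\phi)\to \F_\M(\H)$ is a bounded right $\M$-module map, since both $x$ (being in $\Gamma_\M(\H)$, whose generators act as right module maps) and $I$ are right module maps. By \Cref{lem:bdd_vec_module_map} applied to $\N=\M$, $\psi=\phi$ and $\mathcal K=\F_\M(\H)$, such a module map is of the form $L_\phi(\zeta)$ for a (unique) left-bounded vector $\zeta\in\F_\M(\H)$ precisely when $\phi((xI)^\ast(xI))<\infty$, i.e. when $\phi(E(x^\ast x))=\hat\phi(x^\ast x)<\infty$, that is when $x\in \n_{\hat\phi}$. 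So $\Lambda$ is well-defined on $\n_{\hat\phi}$ with values in $\F_\M(\H)$, and $\Lambda(x)$ is the left-bounded vector with $L_\phi(\Lambda(x))=xI$.

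Next I would check the module property $\pi(x)\Lambda(y)=\Lambda(xy)$ for $x\in\Gamma_\M(\H)$, $y\in\n_{\hat\phi}$: this amounts to $L_\phi(x\Lambda(y))=xL_\phi(\Lambda(y))=x(yI)=(xy)I=L_\phi(\Lambda(xy))$, where the first equality is the module-map relation $L_\phi(x\zeta)=xL_\phi(\zeta)$ valid for $x$ a bounded operator commuting appropriately — here one uses that $x\in\Gamma_\M(\H)$ and $L_\phi$ intertwines the left actions. Then I would verify the inner product formula: for $x,y\in\n_{\hat\phi}$,
\begin{equation*}
\langle \Lambda(x),\Lambda(y)\rangle = L_\phi(\Lambda(x))^\ast L_\phi(\Lambda(y)) \text{ paired against }\phi^{1/2},
\end{equation*}
more precisely $\langle\Lambda(x),\Lambda(y)\rangle_{\F_\M(\H)}=\phi(L_\phi(\Lambda(x))^\ast L_\phi(\Lambda(y)))$ by the preceding lemma (which says $\phi(L_\phi(\zeta)^\ast L_\phi(\zeta))=\norm{\zeta}^2$, polarized), and $L_\phi(\Lambda(x))^\ast L_\phi(\Lambda(y))=(xI)^\ast(yI)=I^\ast x^\ast y I=E(x^\ast y)$, so $\langle\Lambda(x),\Lambda(y)\rangle=\phi(E(x^\ast y))=\hat\phi(x^\ast y)$, as required. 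I would also confirm that $\n_{\hat\phi}$ is a dense left ideal (using normality of $E$ and semi-finiteness of $\phi$, noting $\Lambda_\phi(\n_\phi)=\{L_\phi^{-1}(yI)\cdot\text{const}\}$ and that $\M\cdot I\cdot\Lambda_\phi(\n_\phi\cap\n_\phi^\ast)$ generates a dense subspace), that $\Lambda$ has dense range, and that $\Lambda$ is $\sigma$-strong$^\ast$ closed — the latter following because $L_\phi$ is a closed map and $x\mapsto xI$ is continuous.

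Once these are in place, the essential uniqueness statement gives a unitary identifying $(\F_\M(\H),\mathrm{id},\Lambda)$ with $(L_2(\Gamma_\M(\H),\hat\phi),\Lambda_{\hat\phi},\pi_{\hat\phi})$; in particular the existence of a semi-cyclic representation with definition ideal dense in $\Gamma_\M(\H)$ forces $\hat\phi$ to be semi-finite, and the injectivity of $\Lambda$ (clear since $L_\phi$ is injective on left-bounded vectors and $xI=0\Rightarrow x|_{L_2(\M,\phi)}=0\Rightarrow x=0$ because $L_2(\M,\phi)$ is cyclic for $\Gamma_\M(\H)$ acting on the Fock space) forces $\hat\phi$ to be faithful; faithfulness of $E$ then follows from faithfulness of $\hat\phi$ and of $\phi$. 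The main obstacle I anticipate is the bookkeeping around densities: showing $\n_{\hat\phi}$ is $\sigma$-strongly dense in $\Gamma_\M(\H)$ and that $\Lambda(\n_{\hat\phi})$ is dense in the whole Fock space (not just in $L_2(\M,\phi)$). For the latter one should check that vectors of the form $s(\xi_1)\cdots s(\xi_n)I\Lambda_\phi(y)$ with $\xi_i$ nice bounded $\mathcal S$-invariant vectors and $y\in\n_\phi\cap\n_\phi^\ast$ land in $\n_{\hat\phi}$ and that their $\Lambda$-images span a dense subspace of $\H^{\otimes_\phi n}$-components, which requires disentangling the creation/annihilation combinatorics to see that the top-degree term is $\xi_1\otimes_\phi\cdots\otimes_\phi\xi_n\otimes_\phi\Lambda_\phi(y)$ up to lower-order corrections, plus an induction on $n$.
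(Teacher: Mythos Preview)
Your overall plan follows the paper's: verify that $(\F_\M(\H),\mathrm{id},\Lambda)$ has the defining properties of the semi-cyclic representation of $\hat\phi$. The well-definedness of $\Lambda$ via \Cref{lem:bdd_vec_module_map}, the inner product identity $\langle\Lambda(x),\Lambda(y)\rangle=\hat\phi(x^\ast y)$, and the inductive argument for density of $\Lambda(\n_{\hat\phi})$ in each $\H^{\otimes_\phi n}$ are all as in the paper.

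However, your argument for faithfulness contains a genuine gap. You write that $xI=0$ implies $x|_{L_2(\M,\phi)}=0$ implies $x=0$ ``because $L_2(\M,\phi)$ is cyclic for $\Gamma_\M(\H)$ acting on the Fock space''. But cyclicity of $L_2(\M,\phi)$ for $\Gamma_\M(\H)$ (which is indeed what dense range of $\Lambda$ gives) does \emph{not} let you conclude $x=0$ from $x|_{L_2(\M,\phi)}=0$; for that you need $L_2(\M,\phi)$ to be \emph{separating} for $\Gamma_\M(\H)$, equivalently cyclic for the commutant $\Gamma_\M(\H)^\prime$. Nothing in your outline establishes this.

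The paper supplies exactly this missing ingredient: it introduces the right-sided operators $t(\eta)=b(\eta)+b(\eta)^\ast$ for right-bounded $\F$-invariant vectors $\eta$, and \Cref{lem:commutant_Gaussian_functor} shows that each $t(\eta)$ commutes with every $s(\xi)$ (and with the left $\M$-action), so the von Neumann algebra $\N$ generated by $\pi_r(\M^{\op})$ and the $t(\eta)$'s lies in $\Gamma_\M(\H)^\prime$. One then checks, by the same inductive density argument applied from the right, that $\N L_2(\M,\phi)$ is dense in $\F_\M(\H)$. Hence if $x\in\Gamma_\M(\H)$ vanishes on $L_2(\M,\phi)$, it vanishes on $\N L_2(\M,\phi)$ and therefore on all of $\F_\M(\H)$. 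This commutant step is the substantive content you are missing, and it is precisely why \Cref{lem:commutant_Gaussian_functor} was proved beforehand.
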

\begin{proof}
Write $\pi_l\colon \M\to B(\F_\M(\H))$ for the left action. If $x\in \n_\phi$, then
\begin{equation*}
\hat \phi(\pi_l(x)^\ast \pi_l(x))=\phi(I^\ast \pi_l(x^\ast x) I)=\phi(x^\ast x)<\infty.
\end{equation*}
Moreover, if $\xi\in L_\infty(\H_\M,\phi)\cap \H_a$ with $\mathcal S_0\xi=\xi$, then
\begin{equation*}
\hat\phi(s(\xi)^\ast s(\xi))=\phi(I^\ast s(\xi)^2 I)=\phi(L_\phi(\xi)^\ast L_\phi(\xi))<\infty.
\end{equation*}
Since $\n_{\hat\phi}$ is an algebra, this implies that the algebra generated by $\pi_l(\n_\phi)\cup \{s(\xi)\mid \xi\in L_\infty(\H_\M,\phi),\,\mathcal S_0\xi=\xi\}$ is contained in $\n_{\hat\phi}$. As the latter is dense in $\Gamma_\M(\H)$, this implies that $\hat\phi$ is semi-finite.

We show that $\hat\phi$ is faithful by proving that the associated semi-cyclic representation has the claimed form and the map $\Lambda$ is injective.

First note that by definition if $x\in\n_{\hat\phi}$, then $x I$ is a right module map and 
\begin{equation*}
\phi((xI)^\ast (xI))=\hat\phi(x^\ast x)<\infty.
\end{equation*}
Thus $\Lambda$ is well-defined.

Next we show that $\Lambda$ has dense image. If $x\in \n_\phi$, then
\begin{equation*}
x(\phi^{1/2}y)=x\phi^{1/2}y=L_\phi(x\phi^{1/2})\phi^{1/2}y
\end{equation*}
for $y\in \n_\phi^\ast$. Thus $\Lambda(\pi_l(x))=x\phi^{1/2}$. Hence $\Lambda(\n_{\hat\phi})\cap L_2(\M,\phi)$ is dense in $L_2(\M,\phi)$.

If $\xi\in \H_a$ is left-bounded and $\mathcal S_0\xi=\xi$, then
\begin{equation*}
s(\xi)\phi^{1/2}y=\xi y=L_\phi(\xi)\phi^{1/2}y
\end{equation*}
for $y\in \n_\phi^\ast$. Thus $\Lambda(s(\xi))=\xi$. For arbitrary bounded $\xi\in\H_a$ it follows that
\begin{equation*}
\xi=\frac 1 2(\xi+\mathcal S_0\xi)+\frac 1 2(\xi-\mathcal S_0\xi)=\frac 1 2\Lambda(s(\xi-\mathcal S_0\xi))-\frac i 2\Lambda(s(i(\xi-\mathcal S_0\xi))).
\end{equation*}
As the bounded vectors in $\H_a$ are dense in $\H$ by \Cref{prop:bounded_Tomita_bimodule}, it follows that $\Lambda(\n_{\hat\phi})\cap\H$ is dense in $\H$.

Now we proceed by induction to show that $\Lambda(\n_{\hat\phi})\cap\H^{\otimes_\phi n}$ is dense in $\H^{\otimes_\phi n}$. Assume it is already established for $k\leq n$. Let $\xi\in \H_a$ be left-bounded with $\mathcal S_0\xi=\xi$ and $\eta\in \H^{\otimes_\phi n}$. By induction hypothesis there are sequences $(x_n)$, $(y_n)$ in $\n_{\hat\phi}$ such that $\Lambda(x_n)\to \eta$ and $\Lambda(y_n)\to a^\ast(\xi)\eta$.

Hence
\begin{align*}
\xi\otimes\eta=s(\xi)\eta+a^\ast(\xi)\eta=\lim_{n\to\infty}s(\xi)\Lambda(x_n)+\Lambda(y_n)=\lim_{n\to\infty}\Lambda(s(\xi) x_n+y_n).
\end{align*}
As above one concludes that $\Lambda(\n_{\hat\phi})$ is dense in $\H^{\otimes_\phi (n+1)}$. Since $\bigcup_{N\geq 1}(L_2(\M,\phi)\oplus\bigoplus_{n=1}^N \H^{\otimes_\phi n})$ is dense in $\F_\M(\H)$, it follows that $\Lambda$ has dense image. Clearly, 
\begin{equation*}
\langle \Lambda(x),\Lambda(y)\rangle=\phi(L_\phi(\Lambda(x))^\ast L_\phi(\Lambda(y)))=\phi((xI)^\ast (yI))=\hat\phi( x^\ast y)
\end{equation*}
for $x,y\in\n_{\hat\phi}$.

Let $\pi_r\colon \M^\op\to \F_\M(\H)$ be the right action and $\N$ the von Neumann algebra generated by $\pi_r(\M^\op)\cup\{t(\xi)\mid \xi\in L_\infty(_\M\H,\phi)\cap \H_a,\,\F_0\xi=\xi\}$. By \Cref{lem:commutant_Gaussian_functor}, $\N\subset \Gamma_\M(\H)^\prime$. As in the previous step one can show that $\N L_2(\M,\phi)$ is dense in $\F_\M(\H)$.

If $x\in \Gamma_\M(\H)$ with $\hat\phi(x^\ast x)=0$, then $xI=0$ since $\phi$ is faithful. In other words, $x\xi=0$ for all $\xi\in L_2(\M,\phi)$. As $\N\subset \Gamma_\M(\H)^\prime$, it follows that
\begin{equation*}
0=y x\xi=x y\xi
\end{equation*}
for all $\xi\in L_2(\M,\phi)$, $y\in\N$. Since $\N L_2(\M,\phi)$ is dense in $\F_\M(\H)$, we conclude $x=0$.

Finally, if $x\in\Gamma_\M(\H)$ with $E(x^\ast x)=0$, then $0=\phi(E(x^\ast x))=\hat \phi(x^\ast x)$. This implies $x=0$ as we have just seen.
\end{proof}

\begin{remark}
Similar constructions to the one of $\Gamma_\M(\H)$ have been explored before (usually in terms of $C^\ast$-bimodules instead of correspondences), most notably in  Shlyakhtenko's work operator-valued semicircular systems \cite{Shl99} for a normal completely positive map and its extension to quantum Markov semigroups in \cite{JRS}.

The central new element in our treatment is the use of the extra structure of $\H$ in terms of $(\U_z)$ and $\J$, or equivalently, $\mathcal S$, which gives rise to a notion of ``real vectors'' in $\H$, combined with the insight the every GNS-symmetric quantum Markov semigroup gives rise to such extra structure. This ties the construction of $\Gamma_\M(\H)$ closer to the definition of the free Araki--Woods factors from \cite{Shl97}. In particular, this construction in terms of real vectors guarantees that the conditional expectation $E$ on $\Gamma_\M(\H)$ is always faithful (compare \cite[Proposition 5.2]{Shl99}).
\end{remark}

\section{Derivations with values in Tomita bimodules}\label{sec:sym_deriv}

In this section we introduce the notion of symmetric derivations with values in a Tomita bimodule (\Cref{def:sym_deriv}) and show that if a symmetric derivation is inner or can be approximated by inner derivations in a suitable sense, it gives rise to a GNS-symmetric completely Dirichlet form (\Cref{thm:from_deriv_to_QMS}).

\begin{definition}\label{def:sym_deriv}
Let $\AA$ be a Tomita algebra and $\H$ a Tomita bimodule over $\AA$. A \emph{symmetric derivation} is a linear map $\delta\colon \AA\to \H$ such that
\begin{enumerate}[(a)]
\item $\delta(ab)=a\delta(b)+\delta(a)b$ for all $a,b\in\AA$,
\item $\delta(Ja)=\J\delta(a)$ for all $a\in\AA$,
\item $\delta(U_z a)=\U_z\delta(a)$ for all $a\in\AA$, $z\in\IC$.
\end{enumerate}
A symmetric derivation is called closable (resp. bounded) if and only if it is closable (resp. bounded) as densely defined operator from the completion of $\AA$ to the completion of $\H$. A symmetric derivation $\delta$ is called \emph{inner} if there exists a bounded vector $\xi\in\H$ such that $\delta(a)=a\xi-\xi a$ for all $a\in\AA$.
\end{definition}

Recall that we introduced the notation $\xi^\sharp=\J\U_{-i/2}\xi$ and $\xi^\flat=\J\U_{i/2}\xi$ for $\xi\in \H$. With these definitions, the following lemma is immediate.

\begin{lemma}\label{lem:deriv_conjug}
Let $\AA$ be a Tomita algebra and $\H$ a Tomita bimodule. If $\delta\colon \AA\to\H$ is a symmetric derivation, then $\delta(a^\sharp)=\delta(a)^\sharp$ and $\delta(a^\flat)=\delta(a)^\flat$ for all $a\in\AA$.
\end{lemma}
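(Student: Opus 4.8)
The plan is to derive both identities directly from the three defining properties of a symmetric derivation, using only the definitions $a^\sharp = \J\U_{-i/2}a$ (for $a\in\AA$, which extends the Tomita algebra involution $^\sharp$) and $\xi^\sharp = \J\U_{-i/2}\xi$ (for $\xi\in\H$), together with the analogous formulas for $^\flat$ in terms of $\J\U_{i/2}$. Recall from the definition of a Tomita algebra that the right involution is $a^\flat = U_{-i}(a^\sharp)$, and observe that this is consistent with $\J\U_{i/2}$: indeed $\J\U_{-i/2}$ and $\J\U_{i/2}$ are related by $\U_{-i/2}\J\U_{-i/2} = \J\U_{i/2}\U_{-i/2}\cdot\U_{-i/2}$-type manipulations; more cleanly, $(a^\sharp)^\flat$-bookkeeping shows $\J\U_{i/2} = \U_{-i}\circ(\J\U_{-i/2})$ using property (f) of a Tomita bimodule ($\U_z\J = \J\U_{\bar z}$), so the $^\flat$ case will follow from the $^\sharp$ case once that is in hand (and symmetrically one can just repeat the argument).

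For the $^\sharp$ identity: fix $a\in\AA$. First I would use property (c) of a symmetric derivation to write $\delta(\U_{-i/2}a)$—wait, more precisely $\delta(U_{-i/2}a) = \U_{-i/2}\delta(a)$, where on the left $U_{-i/2}$ is the Tomita-algebra automorphism and on the right $\U_{-i/2}$ is the (closed, unbounded) operator on $\overline\H$ from \Cref{lem:mod_group_closable}. Then apply property (b), $\delta(Jb) = \J\delta(b)$, with $b = U_{-i/2}a$: since $J(U_{-i/2}a) = U_{\overline{-i/2}}(Ja)$... actually I want $S_0 a = J U_{-i/2}a = a^\sharp$, so I compute
\begin{equation*}
\delta(a^\sharp) = \delta(J U_{-i/2}a) = \J\,\delta(U_{-i/2}a) = \J\,\U_{-i/2}\,\delta(a) = \delta(a)^\sharp,
\end{equation*}
where the second equality is (b), the third is (c), and the last is the definition of $\xi^\sharp$ for $\xi = \delta(a)\in\H$. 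Note $\delta(a)\in\H$ lies in the domain where $\U_{-i/2}$ acts (it is an honest element of the pre-Hilbert space $\H$, on which $\U_z$ is defined for all $z$), so no closure subtleties arise. The $^\flat$ identity is obtained identically, replacing $U_{-i/2}$ by $U_{i/2}$ throughout and using that $a^\flat = J U_{i/2}a$ and $\xi^\flat = \J\U_{i/2}\xi$.

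There is essentially no obstacle here—this is a one-line verification—so the only thing to be careful about is matching conventions: confirming that the composite $J\circ U_{-i/2}$ on the Tomita algebra genuinely equals the involution $^\sharp$ (this is the content of the polar decomposition $S = J\Delta^{1/2}$ together with $U_{-i/2} = \Delta^{1/2}$ on $\AA$), and that $\delta$ being defined on all of $\AA$ means $\delta(U_z a)$ makes sense for every $z\in\IC$ by property (c). The statement that ``the following lemma is immediate'' in the text reflects exactly this: once the notational identifications $a^\sharp = \J\U_{-i/2}a$ and $\xi^\sharp = \J\U_{-i/2}\xi$ are unwound, the lemma is just the composition of axioms (b) and (c).
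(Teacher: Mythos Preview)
Your proposal is correct and matches the paper's approach exactly: the paper simply declares the lemma ``immediate'' from the definitions preceding it, and your computation $\delta(a^\sharp)=\delta(JU_{-i/2}a)=\J\delta(U_{-i/2}a)=\J\U_{-i/2}\delta(a)=\delta(a)^\sharp$ (and the analogous one for $^\flat$) is precisely what that word ``immediate'' encodes. The only unnecessary part is the slightly meandering first paragraph about relating $^\sharp$ to $^\flat$; since both cases follow by the identical two-step composition of axioms (b) and (c), there is no need to reduce one to the other.
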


If the Tomita bimodule is normal, the vector representing an inner symmetric derivation can be chosen (possibly in the completion of $\H$) so that it is invariant under $(\U_z)$ and $\J$. To prove this, we need some preparation. The key steps were already presented in \cite{Wir22}, but since the result was proven only for derivations induced by quantum Markov semigroups and the case of weights needs some technical adjustments, we give a full proof here.

Let $\H$ be an $\M$-$\M$-correspondence. The space $\L(L_2(\M)_\M,\H_\M)$ is a $C^\ast$-bimodule over $\M$ when equipped with the $\M$-valued inner product $(x,y)\mapsto x^\ast y$. In fact, since $\L(L_2(\M)_\M,\H_\M)$ is strongly closed in $B(L_2(\M),\H)$ and the left action is normal, it is a von Neumann bimodule.

Recall that a von Neumann bimodule $F$ has a unique (isometric) predual \cite[Theorem 2.6]{Sch02}. It can be realized \cite[Proposition 3.8]{Pas73} as $\overline F\otimes_\pi \M_\ast/\ker\iota$, where $\overline F$ is the Banach space with the addition and norm from $F$ and the scalar multiplication $(z,\xi)\mapsto \bar z \xi$, $\otimes_\pi$ denotes the projective tensor product and $\iota(\xi\otimes\omega)=\omega((\xi|\cdot))$. The left and right action of $\M$ on $F$ are weak$^\ast$ continuous.

\begin{lemma}\label{lem:compactness_Hilbert_module}
Let $\H$ be an $\M$-$\M$-correspondence and $\phi$ a normal semi-finite faithful weight on $\M$. For $\lambda,\mu>0$ the set
\begin{equation*}
C=\{L_\phi(\xi)\mid \xi\in L_\infty(\H_\M,\phi),\norm{\xi}\leq\lambda,\norm{L_\phi(\xi)}\leq \mu\}
\end{equation*}
is weak$^\ast$-compact.
\end{lemma}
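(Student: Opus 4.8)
The plan is to exhibit $C$ as a weak$^\ast$-closed subset of a weak$^\ast$-compact set. Recall from the preceding discussion that $F:=\L(L_2(\M)_\M,\H_\M)$ is a von Neumann $\M$-bimodule, hence has a (unique isometric) predual $F_\ast$, and the two bounds $\norm{L_\phi(\xi)}\le\mu$ and $\norm{\xi}=\phi((\xi|\xi))^{1/2}\le\lambda$ (using the lemma above that $\phi(L_\phi(\xi)^\ast L_\phi(\xi))=\norm{\xi}^2$) are exactly the statements that $x=L_\phi(\xi)$ lies in the operator-norm ball of radius $\mu$ in $F$ and satisfies $\phi(x^\ast x)\le\lambda^2$. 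The operator-norm ball of $F$ is weak$^\ast$-compact by Banach--Alaoglu applied to $F=(F_\ast)^\ast$, so it suffices to show that the set
\begin{equation*}
C=\{x\in F\mid \norm x\le\mu,\ \phi(x^\ast x)\le\lambda^2\}
\end{equation*}
is weak$^\ast$-closed in that ball, together with the identification of $C$ with the set in the statement via \Cref{lem:bdd_vec_module_map}.

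The key steps, in order: (i) Use \Cref{lem:bdd_vec_module_map} to identify $\{L_\phi(\xi)\mid \xi\in L_\infty(\H_\M,\phi)\}$ with $\{x\in F\mid \phi(x^\ast x)<\infty\}$, and the lemma on $\phi(L_\phi(\xi)^\ast L_\phi(\xi))=\norm\xi^2$ to translate the two norm constraints into constraints on $x$; this reduces the claim to weak$^\ast$-compactness of the $C$ displayed above. (ii) Note that $\{x\in F\mid \norm x\le\mu\}$ is weak$^\ast$-compact. (iii) Show the functional $x\mapsto\phi(x^\ast x)$ is weak$^\ast$-lower semicontinuous on the $\mu$-ball, so that $C$, being the intersection of the ball with a sublevel set $\{\phi(x^\ast x)\le\lambda^2\}$, is weak$^\ast$-closed, hence weak$^\ast$-compact. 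For step (iii) one writes $\phi$ as a supremum of normal positive functionals on $\M$ (by normality and semifiniteness, $\phi=\sup_{\omega\le\phi}\omega$ over $\omega\in\M_\ast^+$ majorized by $\phi$), so $\phi(x^\ast x)=\sup_\omega \omega(x^\ast x)$; it then remains to see that each $x\mapsto\omega(x^\ast x)$ is weak$^\ast$-lower semicontinuous on the bounded ball, and a supremum of such functions is again weak$^\ast$-lower semicontinuous.

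The main obstacle is step (iii), specifically the weak$^\ast$-lower semicontinuity of $x\mapsto\omega(x^\ast x)$ on the $\mu$-ball of the von Neumann bimodule $F$. The natural approach is to fix a GNS-type vector for $\omega$: if $\omega=\iprod{\zeta}{\,\cdot\,\zeta}$ for some $\zeta\in L_2(\M)$ in the natural cone, then $\omega(x^\ast x)=\norm{x\zeta}^2_{\H}$, and one wants $x\mapsto x\zeta$ to be weak$^\ast$-to-weak continuous from the bounded ball of $F$ into $\H$. This follows because the predual of $F$ was realized as a quotient of $\overline F\otimes_\pi\M_\ast$, whose elements include the functionals $x\mapsto\eta((x\zeta|\cdot))$ — more concretely, evaluation against vectors of the form $\xi\otimes\omega'$ recovers inner products $\iprod{x\zeta_1}{\zeta_2}$, so weak$^\ast$-convergence in $F$ forces weak convergence of $x\zeta$ in $\H$, and the norm in $\H$ is weakly lower semicontinuous; taking the supremum over $\omega\le\phi$ preserves lower semicontinuity. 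I would take care here to use the explicit description of $F_\ast$ from \cite{Pas73} rather than an abstract appeal, since the identification of which functionals lie in $F_\ast$ is what makes the continuity of $x\mapsto x\zeta$ transparent; everything else is a routine assembly.
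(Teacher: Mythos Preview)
Your proposal is correct and follows essentially the same route as the paper: identify $C$ via \Cref{lem:bdd_vec_module_map} with $\{x\in F\mid \phi(x^\ast x)\le\lambda^2,\ \norm{x}\le\mu\}$, invoke Banach--Alaoglu for the norm ball, and establish weak$^\ast$-lower semicontinuity of $x\mapsto\phi(x^\ast x)$ by writing $\phi=\sup_{\omega\le\phi}\omega$ and using the Paschke predual description. The only cosmetic difference is that the paper handles each $\omega(x^\ast x)$ by the Cauchy--Schwarz identity $\omega(x^\ast x)=\sup_{\omega(y^\ast y)\le 1}\abs{\omega(y^\ast x)}^2$ (so the weak$^\ast$-continuous functionals $x\mapsto\omega(y^\ast x)$ appear directly), whereas you pass through the GNS vector and weak continuity of $x\mapsto x\zeta$; the paper's variant is slightly cleaner since it avoids the density step for $\eta\in\H$.
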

\begin{proof}
By \Cref{lem:bdd_vec_module_map}, the set $C$ coincides with 
\begin{equation*}
\{x\in \L(L_2(\M)_\M,\H_\M)\mid \phi(x^\ast x)\leq \lambda,\,\norm{x}\leq\mu\},
\end{equation*}
which is weak$^\ast$ precompact by the Banach-Alaoglu theorem.

Moreover,
\begin{equation*}
\phi(x^\ast x)=\sup_{\substack{\omega\in \M_\ast^+\\\omega\leq \phi}}\omega(x^\ast x)=\sup_{\substack{\omega\in \M_\ast^+\\\omega\leq \phi}}\sup_{\substack{y\in \L(L_2(\M)_\M,\H_\M)\\\omega(y^\ast y)\leq 1}}\abs{\omega(x^\ast y)}^2.
\end{equation*}
By the description of the predual given above, functionals of the form $x\mapsto \omega(y^\ast x)$ with $y\in \L(L_2(\M)_\M,\H_\M)$ and $\omega\in \M_\ast$ are weak$^\ast$ continuous. Thus $x\mapsto \phi(x^\ast x)$ is weak$^\ast$ lower semicontinuous. Therefore $C$ is weak$^\ast$ closed.
\end{proof}

\begin{proposition}\label{prop:rep_vector_nice}
Let $\AA$ be a Tomita algebra and $\H$ a normal Tomita bimodule over $\AA$. If $\delta\colon \AA\to \H$ is a symmetric inner derivation, then there exists a vector $\xi\in\overline{\H}$ such that $\U_t\xi=\xi$ for all $t\in \IR$ and $\J\xi=\xi$.
\end{proposition}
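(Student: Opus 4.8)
The plan is to start from a bounded vector $\xi_0 \in \H$ implementing $\delta$, i.e.\ $\delta(a) = a\xi_0 - \xi_0 a$ for all $a \in \AA$, and to ``symmetrize'' it with respect to both $\J$ and the group $(\U_t)_{t\in\IR}$ while preserving the property of implementing $\delta$. The first step is to handle the conjugation $\J$. From \Cref{lem:deriv_conjug} we have $\delta(a)^\sharp = \delta(a^\sharp)$, and since $\J$ intertwines the left and right actions (axiom (b) of a Tomita bimodule) one computes that $\J \xi_0$ (or rather $\mathcal S \xi_0 = \J \U_{-i/2}\xi_0$, after checking $\xi_0$ lies in the appropriate domain, which follows because $\xi_0$ is bounded and we may first mollify it as in \Cref{prop:bounded_Tomita_bimodule}) also implements $\delta$. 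Hence replacing $\xi_0$ by $\frac12(\xi_0 + \mathcal S\xi_0)$ we may assume $\mathcal S\xi_0 = \xi_0$, equivalently $\J\xi_0 = \U_{i/2}\xi_0$ in the suitable sense; after mollifying we can arrange a genuinely bounded, $\J$-symmetric representing vector. The subtlety here is purely one of domains: $\U_{-i/2}$ is unbounded, so one must work with an entire-analytic mollification $\xi_0^{(n)} = \sqrt{n/\pi}\int_\IR e^{-nt^2}\U_t \xi_0\,dt$ first, note that it still implements $\delta$ up to a vanishing error (or exactly, using that $\delta$ commutes with $(\U_z)$), and only then symmetrize.

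The second and main step is to average over the unitary group $(\U_t)_{t\in\IR}$ to produce an invariant vector. The naive averages $\frac{1}{T}\int_0^T \U_t\xi_0\,dt$ need not converge in norm (there could be a nontrivial spectral contribution away from $0$). This is where normality of the Tomita bimodule and \Cref{lem:compactness_Hilbert_module} enter: because $\H$ is normal, each bounded $\U_t\xi_0$ corresponds via $L_\phi$ to an element of the von Neumann module $\L(L_2(\M)_\M,\H_\M)$, and by \Cref{lem:boundedness_Tomita_correspondence} we have $L_\phi(\U_t\xi_0) = $ (something bounded by $\norm{L_\phi(\xi_0)}$ and with $\phi$-norm bounded by $\norm{\xi_0}$), so the whole orbit lies in the weak$^\ast$-compact convex set $C$ of \Cref{lem:compactness_Hilbert_module}. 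Therefore the Cesàro averages $\frac1T\int_0^T L_\phi(\U_t\xi_0)\,dt$ have a weak$^\ast$ cluster point $x_\infty \in C$; by a standard ergodic/Alaoglu argument this cluster point is $(\U_t)$-invariant (i.e.\ fixed by the dual action $x \mapsto L_\phi(\U_t \, L_\phi^{-1}(x))$), hence corresponds to a bounded vector $\xi_\infty \in \H$ with $\U_t \xi_\infty = \xi_\infty$ for all $t$. One then checks $\xi_\infty$ still implements $\delta$: since $\delta(a) = \U_t \delta(\sigma^\phi_{-t}(a))\U_{-t}$-type identities (axiom (c) of a symmetric derivation plus $U_z$ on $\AA$) make $a\xi_0 - \xi_0 a$ transform correctly under averaging, and weak$^\ast$ convergence passes through the (fixed, finite) expressions $a(\cdot) - (\cdot)a$ paired against vectors in $\H$. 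Finally, averaging does not destroy $\J$-invariance because $\J \U_t = \U_t \J$, so $\xi_\infty$ inherits $\J\xi_\infty = \xi_\infty$ from the first step (or one re-symmetrizes: $\frac12(\xi_\infty + \J\xi_\infty)$ is still $(\U_t)$-invariant and still implements $\delta$).

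I expect the main obstacle to be the second step — extracting a $(\U_t)$-invariant representing vector — precisely because norm-convergence of ergodic averages fails in general and one genuinely needs the weak$^\ast$-compactness afforded by normality (this is exactly why the hypothesis ``$\H$ normal'' appears). The care required is: (i) verifying that the weak$^\ast$-limit of averages of $L_\phi(\U_t\xi_0)$ lies back in the image of $L_\phi$ on bounded vectors (given by \Cref{lem:bdd_vec_module_map}/\Cref{lem:compactness_Hilbert_module}), (ii) checking the limit vector still implements $\delta$, which uses that the derivation property and the commutation of $\delta$ with $(\U_z)$ survive the averaging, and (iii) bookkeeping the unbounded operators $\U_{\pm i/2}$ so that the $\J$-symmetrization of the first step is legitimate, which is handled by the Gaussian mollification trick from the proof of \Cref{prop:bounded_Tomita_bimodule}. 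The remaining verifications (that $\xi \in \overline\H$, that $\U_t\xi = \xi$, $\J\xi = \xi$) are then immediate from the construction.
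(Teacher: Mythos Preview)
Your overall strategy is sound and close to the paper's, but the key technical device differs. The paper does not use Cesàro averages at all: it defines the weak$^\ast$-compact convex set
\[
C=\{L_\phi(\xi)\mid \xi\in L_\infty(\overline\H_\M,\phi),\ \norm{\xi}\le\norm{\eta},\ \norm{L_\phi(\xi)}\le\norm{L_\phi(\eta)},\ \delta=\delta_\xi\},
\]
checks that each map $x\mapsto \U_t x U_{-t}$ is weak$^\ast$ continuous and leaves $C$ invariant (using $\delta(U_t a)=\U_t\delta(a)$), and then invokes the Ryll--Nardzewski fixed-point theorem to produce a common fixed point directly. Your ergodic-average argument is a legitimate substitute: once one knows the orbit $\{L_\phi(\U_t\xi_0)\}$ sits in the compact set of \Cref{lem:compactness_Hilbert_module} and that the action is weak$^\ast$ continuous, the estimate $\norm{\U_s\xi_T-\xi_T}\le 2|s|\norm{\xi_0}/T$ forces any weak$^\ast$ cluster point of the averages to be $(\U_t)$-invariant. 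The extra work compared to the paper is that you must separately verify the cluster point still implements $\delta$; the paper builds this constraint into $C$ from the start, so Ryll--Nardzewski hands it back for free. What your route buys is avoiding a somewhat heavy fixed-point theorem in favour of an explicit construction.

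Two organisational points. First, the paper reverses your order: it obtains $(\U_t)$-invariance first and only afterwards symmetrises with $\J$, replacing $\xi$ by $\tfrac12(\xi+\J\xi)$. Doing it this way avoids all the domain complications you anticipate with $\mathcal S=\J\U_{-i/2}$ and Gaussian mollification, since once $\U_t\xi=\xi$ the analytic issues evaporate. Second, your first step aims for $\mathcal S\xi_0=\xi_0$ rather than $\J\xi_0=\xi_0$; you are right that after averaging these coincide, but it makes the argument more convoluted than necessary. In short: your plan works, but the paper's use of Ryll--Nardzewski plus the simpler ordering ($\U_t$ first, $\J$ afterwards) is cleaner and sidesteps the mollification detour entirely.
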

\begin{proof}
Let $\M=\pi_l(\AA)^{\prime\prime}$ and $\phi$ the normal semi-finite faithful weight on $\M$ induced by $\AA^{\prime\prime}$.

For $x,y\in \L(L_2(\M)_\M,\overline\H_\M)$ and $\omega\in\M_\ast$ we have
\begin{equation*}
\omega(y^\ast \U_t x U_{-t})=\omega(U_{t}(\U_{-t}y U_t)^\ast x U_{-t}).
\end{equation*}
Since $x\mapsto U_t x U_{-t}$ leaves $\M$ invariant, $\omega(U_t\,\cdot\,U_{-t})\in\M_\ast$ and $\norm{\omega(U_t\,\cdot\,U_{-t})}=\norm{\omega}$. Moreover, $\U_{-t}y U_t\in \L(L_2(\M)_\M,\overline\H_\M)$ and $\norm{\U_{-t}y U_t}=\norm{y}$. It follows from the description of the predual given above that $x\mapsto \U_t x U_{-t}$ is a weak$^\ast$ continuous map on $\L(L_2(\M)_\M,\overline\H_\M)$.

For a bounded vector $\xi\in \overline\H$ let $\delta_\xi\colon a\mapsto a\xi-\xi a$. By assumption there exists a bounded vector $\eta\in \H$ such that $\delta=\delta_\eta$. Thus the set
\begin{equation*}
C=\{L_\phi(\xi)\mid \xi\in L_\infty(\overline\H_\M,\phi),\,\norm{\xi}\leq \norm\eta,\,\norm{L_\phi(\xi)}\leq \norm{L_\phi(\eta)},\,\delta=\delta_\xi\}
\end{equation*}
is non-empty. By \Cref{lem:compactness_Hilbert_module}, it is weak$^\ast$-compact.

By the previous discussion, $x\mapsto \U_t x U_{-t}$ is weak$^\ast$ continuous. Note that by the definition of a Tomita algebra, $\U_t L_\phi(\xi)U_{-t}=L_\phi(\U_t\xi)$. Moreover, if $L_\phi(\xi)\in C$ and $a\in \AA$, then
\begin{equation*}
\delta_{\U_t\xi}(a)=a\U_t\xi-(\U_t\xi)a=\U_t((U_{-t}a)\xi-\xi U_{-t}a)=\U_t\delta(U_{-t}a)=\delta(a).
\end{equation*}
Therefore, $x\mapsto\U_t x U_{-t}$ leaves $C$ invariant. It follows from the Ryll-Nardzewski fixed-point theorem that there exists $x\in C$ such that $\U_t x U_{-t}=x$, or, in other words, a left-bounded vector $\xi\in \overline\H$ such that $\U_t\xi=\xi$ and $\delta_\xi=\delta$.

Since there exists a bounded vector $\eta\in \H$ such that $\delta=\delta_\eta$, the derivation $\delta$ is bounded from $\AA$ to $\H$. Thus
\begin{equation*}
\norm{a\xi}\leq \norm{\delta(a)}+\norm{\xi a}\leq (\norm{\delta}+\norm{\xi})\norm{a},
\end{equation*}
which means that $\xi$ is right-bounded as well.

Finally, to ensure $\J\xi=\xi$, one can replace $\xi$ by $\frac 1 2(\xi+\J\xi)$. Thus preserves bounded and invariance under $(\U_t)$ and still represents $\delta$ by the property $\J\circ\delta=\delta\circ J$.
\end{proof}

For the following proposition note that if $\AA$ is a Tomita algebra with Hilbert completion $\HH$ and $\phi$ the associated weight on $\pi_l(\AA)^{\prime\prime}$, then $a\mapsto \pi_l(a)\phi^{1/2}$ induces a unitary operator from $\HH$ to $L_2(\pi_l(\AA)^{\prime\prime},\phi)$. Under this identification it makes sense to speak of GNS-symmetric quantum Markov semigroups on $\HH$.

\begin{proposition}\label{prop:from_bdd_deriv_to_QMS}
Let $\AA$ be a Tomita algebra with completion $\HH$ and $\H$ a normal Tomita bimodule over $\AA$. If $\delta\colon\AA\to \H$ is a symmetric inner derivation, then the quadratic form
\begin{equation*}
\E\colon\AA\to [0,\infty),\,\E_0(a)=\norm{\delta(a)}_\H^2
\end{equation*}
is bounded on $\HH$ and its closure is a GNS-symmetric quantum Dirichlet form.
\end{proposition}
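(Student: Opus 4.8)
The plan is to identify the closure $\E$ explicitly through a bounded operator, to read off its symmetry properties directly from \Cref{def:sym_deriv}, and then to compute a GKSL-type formula for the generator of the associated semigroup on $\M$, from which complete positivity and conservativeness follow. By \Cref{prop:rep_vector_nice} we may assume the bounded vector $\xi$ representing $\delta$ satisfies $\U_t\xi=\xi$ for all $t\in\IR$ and $\J\xi=\xi$; since the orbit $t\mapsto\U_t\xi$ is then constant, $\U_z\xi=\xi$ for all $z\in\IC$ and in particular $\mathcal S\xi=\xi$. Write $\M=\pi_l(\AA)''$, let $\phi$ be the associated weight, and identify $\HH$ with $L_2(\M,\phi)$ via $a\mapsto\Lambda_\phi(a)$. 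Using that $\H$ is normal one checks on $\AA$ that $L(a)\xi=\pi_l(a)\cdot\xi=R_\phi(\xi)\Lambda_\phi(a)$ and $R(a)\xi=L_\phi(\xi)\Lambda_\phi(a)$ (for the second identity one uses $J\pi_r(a)^\ast J=\pi_l(U_{i/2}a)$), so that $\bar\delta=R_\phi(\xi)-L_\phi(\xi)$ as a bounded operator from $L_2(\M,\phi)$ to $\overline\H$. Hence $\E_0(a)=\norm{\bar\delta\Lambda_\phi(a)}^2\le\norm{\bar\delta}^2\norm{a}_\HH^2$ is bounded, its closure $\E$ has domain all of $\HH$ with $\E(\zeta)=\norm{\bar\delta\zeta}^2$, and the associated positive self-adjoint operator is $\L=\bar\delta^\ast\bar\delta$.

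Properties (b) and (c) of \Cref{def:sym_deriv} (equivalently \Cref{lem:deriv_conjug}) give $\bar\delta\Delta_\phi^{is}=\U_s\bar\delta$ and $\bar\delta J_\phi=\J\bar\delta$ on $\AA$, hence on $\HH$; taking adjoints, $\L$ commutes with $\Delta_\phi^{is}$ and with $J_\phi$, so $\E\circ\Delta_\phi^{is}=\E$ and $\E\circ J_\phi=\E$. It remains to see that the semigroup on $\M$ associated with $T_t=e^{-t\L}$ is completely positive and unital. Let $\hat\L$ denote its generator, so that $\L\Lambda_\phi(x)=\Lambda_\phi(\hat\L(x))$. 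Expanding $\L=(R_\phi(\xi)-L_\phi(\xi))^\ast(R_\phi(\xi)-L_\phi(\xi))$ and using that $c:=(\xi|\xi)=L_\phi(\xi)^\ast L_\phi(\xi)$ lies in the centralizer $\M^\phi$ (because $\U_t\xi=\xi$), that $R_\phi(\xi)=\J L_\phi(\xi)J_\phi$ by $\J\xi=\xi$ and \Cref{lem:boundedness_Tomita_correspondence} (whence $R_\phi(\xi)^\ast R_\phi(\xi)$ implements right multiplication by $c$), and that $L_\phi(\xi)^\ast R_\phi(\xi)$ implements the normal completely positive map $\Psi(x)=L_\phi(\xi)^\ast\tilde L(x)L_\phi(\xi)$ on $\M$, one arrives at $\hat\L(x)=cx+xc-\Psi(x)-\Psi'(x)$ with $\Psi'$ the GNS-adjoint of $\Psi$. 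The intertwining relation $\U_tL_\phi(\xi)=L_\phi(\xi)\Delta_\phi^{it}$ (again from $\U_t\xi=\xi$) shows that $\Psi$ commutes with $\sigma^\phi$, and a direct computation then gives $\Psi'=\Psi$, so that
\begin{equation*}
-\hat\L(x)=\Phi(x)-\tfrac12\{\Phi(1),x\},\qquad\Phi:=2\Psi\ \text{completely positive},\quad\Phi(1)=2c.
\end{equation*}
This is a GKSL form without drift term, so $e^{-t\hat\L}$ is completely positive, and $\hat\L(1)=2c-\Phi(1)=0$ makes it unital (when $\phi$ is finite this is just $\bar\delta\phi^{1/2}=1\cdot\xi-\xi\cdot 1=0$). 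Hence the $\M$-semigroup is a GNS-symmetric quantum Markov semigroup — GNS symmetry following from self-adjointness of $\L$ together with its commutation with $\Delta_\phi^{is}$ — and by the correspondence recalled after \cite[Theorem 5.7]{GL99} its quadratic form $\E$, the closure of $\E_0$, is a GNS-symmetric quantum Dirichlet form.

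The main obstacle is turning the abstract identity $\bar\delta=R_\phi(\xi)-L_\phi(\xi)$ into the explicit GKSL formula: this requires carefully tracking the modular data — the passage between $\Lambda_\phi$ and $\Lambda_\phi'$, the identity $J\pi_r(a)^\ast J=\pi_l(U_{i/2}a)$, and the various occurrences of $\sigma^\phi_{\pm i/2}$ — and in particular verifying that the two cross terms assemble into a single completely positive map, which is precisely where the hypothesis $\U_t\xi=\xi$ (the modular symmetry) is used. Alternatively, once $\hat\L$ is seen to be bounded, GNS-symmetric, to annihilate $1$, and to have the displayed structure, one may instead invoke the characterization of generators of uniformly continuous GNS-symmetric quantum Markov semigroups.
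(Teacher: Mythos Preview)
Your argument is essentially the same as the paper's: both use \Cref{prop:rep_vector_nice} to obtain a bounded vector $\xi$ with $\U_t\xi=\xi$ and $\J\xi=\xi$, and both arrive at the generator $x\mapsto(\xi|\xi)x+x(\xi|\xi)-2(\xi|x\xi)$ on $\M$. The paper simply notes this operator is conditionally completely negative definite and invokes \cite[Theorem 14.7]{EL77} for the quantum Markov semigroup, whereas you spell out the GKSL form explicitly via the factorization $\bar\delta=R_\phi(\xi)-L_\phi(\xi)$. Your derivation of GNS symmetry directly from \Cref{def:sym_deriv}(c) is a clean alternative to the paper's route through the generator formula; on the other hand, the paper avoids the modular bookkeeping you flag as the ``main obstacle'' (the identity $J\pi_r(a)^\ast J=\pi_l(U_{-i/2}a)$---note the sign---and the verification $\Psi'=\Psi$), since the appeal to \cite{EL77} makes that computation unnecessary. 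Both routes are correct.
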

\begin{proof}
Boundedness of $\E$ follows immediately from the definition of an inner derivation. By \Cref{prop:rep_vector_nice} there exists a bounded vector $\xi\in\overline\H$ such that $\U_t\xi=\xi$ for all $t\in\IR$, $\J\xi=\J$ and $\delta(a)=a\xi-\xi a$ for all $a\in\AA$. A direct computation shows 
\begin{equation*}
\E(a,b)=\langle (\xi|\xi) a+a(\xi|\xi)-2(\xi|L(a)\xi),b\rangle.
\end{equation*}
The operator
\begin{equation*}
\pi_l(\AA)^{\prime\prime}\to\pi_l(\AA)^{\prime\prime},\,x\mapsto (\xi|\xi)x+x(\xi|\xi)-2(\xi|x\xi)
\end{equation*}
is conditionally negative definite, so that it generates a quantum Markov semigroup $(P_t)$ on $\M$ by \cite[Theorem 14.7]{EL77}.

Since $\U_t\xi =\xi$ for all $t\in\IR$, we have
\begin{align*}
&\quad\;U_t((\xi|\xi)x+x(\xi|\xi)-2(\xi|x\xi))U_{-t}\\
&=(\U_t\xi|\U_t\xi)U_t x U_{-t}+U_t x U_{-t}(\U_t\xi|\U_t\xi)-2(\U_t \xi,U_t x U_{-t}\cdot\U_t\xi)\\
&=(\xi|\xi)U_t x U_{-t}+U_t xU_{-t}(\xi|\xi)-2(\xi|U_t x U_{-t}\cdot\xi).
\end{align*}
Thus the semigroup $(P_t)$ commutes with the modular group on $\pi_l(\AA)^{\prime\prime}$. Therefore the closure of $\E$ is GNS-symmetric quantum Dirichlet form.
\end{proof}

\begin{definition}
Let $\AA$ be a Tomita algebra with completion $\HH$ and $\H$ a Tomita bimodule over $\AA$. A symmetric derivation $\delta\colon\AA\to\H$ is called \emph{nearly inner} if it is closable and there exists a sequence $(\H_n)$ of normal Tomita bimodules over $\AA$ and a sequence $(\delta_n)$ of inner symmetric derivations from $\AA$ to $\H_n$ such that
\begin{equation*}
\norm{\bar\delta_n(\xi)}_{\H_n}\nearrow \begin{cases}\norm{\bar\delta(\xi)}_\H&\text{if }\xi\in D(\bar\delta),\\ \infty&\text{otherwise}.\end{cases}
\end{equation*}
\end{definition}

\begin{theorem}\label{thm:from_deriv_to_QMS}
Let $\AA$ be a Tomita algebra and $\H$ a Tomita bimodule over $\AA$. If $\delta\colon \AA\to \H$ is a nearly inner symmetric derivation, then the quadratic form
\begin{equation*}
\E_0\colon \AA\to[0,\infty),\,\E_0(a)=\norm{\delta(a)}_\H^2
\end{equation*}
is closable and its closure is an GNS-symmetric completely Dirichlet form.
\end{theorem}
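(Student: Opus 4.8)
The plan is to identify $\E$, the closure of $\E_0$, as the monotone limit of the GNS-symmetric completely Dirichlet forms attached to the approximating inner derivations, and to transport the Dirichlet property across the resulting convergence of $L_2$-semigroups. Closability of $\E_0$ is automatic here: a nearly inner derivation is closable by hypothesis, and for any closable operator $T$ the quadratic form $\xi\mapsto\norm{T\xi}^2$ is closable with closure $\xi\mapsto\norm{\bar T\xi}^2$; hence the closure $\E$ of $\E_0$ is given by $\E(\xi)=\norm{\bar\delta(\xi)}_\H^2$ on $D(\E)=D(\bar\delta)$, a closed nonnegative densely defined form (dense because $\AA\subseteq D(\bar\delta)$). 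For each $n$ the map $\delta_n\colon\AA\to\H_n$ is a symmetric inner derivation into a \emph{normal} Tomita bimodule, so \Cref{prop:from_bdd_deriv_to_QMS} applies: the form $a\mapsto\norm{\delta_n(a)}_{\H_n}^2$ is bounded on $\HH$ and its closure $\E_n$ is a GNS-symmetric quantum Dirichlet form, in particular a GNS-symmetric completely Dirichlet form; write $(T_t^{(n)})$ for its associated strongly continuous symmetric contraction semigroup on $\HH\cong L_2(\M,\phi)$, where $\M=\pi_l(\AA)^{\prime\prime}$ carries its canonical weight $\phi$. Thus $(T_t^{(n)})$ leaves the cone $C$ invariant, commutes with $(\Delta_\phi^{is})$, and $\E_n\circ\Delta_\phi^{is}=\E_n$.

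The second step is monotone convergence. The defining property of a nearly inner derivation says precisely that $\E_n(\xi)=\norm{\bar\delta_n(\xi)}_{\H_n}^2\nearrow\E(\xi)$ for $\xi\in D(\E)$ while $\E_n(\xi)\to\infty$ for $\xi\notin D(\E)$; that is, $(\E_n)$ is a non-decreasing sequence of closed quadratic forms converging to $\E$ in the sense of monotone convergence of forms, with $D(\E)=\{\xi\in\HH\mid\sup_n\E_n(\xi)<\infty\}$. By the classical monotone convergence theorem for quadratic forms the limit form is closed — necessarily equal to $\E$ — and the resolvents converge strongly and monotonically, $(\L_n+1)^{-1}\searrow(\L+1)^{-1}$, where $\L$ is the positive self-adjoint operator associated with $\E$. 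Hence $T_t^{(n)}\to T_t:=e^{-t\L}$ strongly for every $t\ge 0$, and $(T_t)$ is automatically a strongly continuous symmetric contraction semigroup on $L_2(\M,\phi)$.

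It remains to verify that $(T_t)$ is a symmetric quantum dynamical semigroup on $L_2(\M,\phi)$ that is GNS-symmetric in the sense of \Cref{sec:QMS}; by the correspondence between such semigroups and GNS-symmetric completely Dirichlet forms recalled there (based on \cite{GL99}) this is equivalent to $\E$ being a GNS-symmetric completely Dirichlet form. Since $C$ is closed and convex and each $T_t^{(n)}$ maps $C$ into $C$, the strong limit $T_t$ maps $C$ into $C$ as well. Furthermore $\E\circ\Delta_\phi^{is}=\lim_n\E_n\circ\Delta_\phi^{is}=\lim_n\E_n=\E$ for every $s\in\IR$ (equivalently, $\bar\delta(\Delta_\phi^{is}\xi)=\U_s\bar\delta(\xi)$ for $\xi\in D(\bar\delta)$ and $\U_s$ is isometric), so $\Delta_\phi^{is}$ commutes with $\L^{1/2}$, hence with $\L$, hence with $T_t$. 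Therefore $(T_t)$ leaves $C$ invariant and commutes with $(\Delta_\phi^{is})$, and $\E$ is a GNS-symmetric completely Dirichlet form. (Alternatively one runs the cone argument on each amplified cone $C_k$, using $(\E_n)_k\nearrow\E_k$; note that the unitality of the $(P_t^{(n)})$ need not pass to the limit, so one obtains a completely Dirichlet, not necessarily a \emph{quantum} Dirichlet, form.)

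The main obstacle is the analytic content of the second step: checking that the hypothesis in the definition of a nearly inner derivation fits the monotone convergence theorem for forms — in particular that the ``$\to\infty$'' clause is exactly the identity $D(\E)=\{\xi\mid\sup_n\E_n(\xi)<\infty\}$ — and deducing strong convergence of the resolvents, hence of the semigroups. Everything else (closability of forms $\norm{T\cdot}^2$, invariance of a closed convex set under a strong limit of contractions, and compatibility of amplification with monotone limits of forms) is routine.
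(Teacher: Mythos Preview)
Your argument is correct, but it takes a noticeably heavier route than the paper. You invoke the monotone convergence theorem for forms to obtain strong resolvent (and hence semigroup) convergence $T_t^{(n)}\to T_t$, and then transport the invariance of the closed convex set $C$ along this strong limit. The paper avoids the detour through semigroups entirely: once $\E_n\nearrow\E$ pointwise on $\HH$ (viewed as $[0,\infty]$-valued functions), one checks the Dirichlet property directly at the level of forms via
\[
\E(P_C(\xi))=\lim_{n}\E_n(P_C(\xi))\le\liminf_n\E_n(\xi)\le \E(\xi),
\]
using only that each $\E_n$ is already a Dirichlet form by \Cref{prop:from_bdd_deriv_to_QMS}; the same one-line estimate applied to the matrix amplifications gives complete Dirichletness. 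This sidesteps any appeal to Kato/Simon-type convergence theorems or to Trotter--Kato for semigroups. Your approach has the mild advantage of producing strong convergence of the semigroups as a by-product, but for the statement at hand the paper's argument is both shorter and more elementary. Your handling of closability and of GNS symmetry ($\E\circ\Delta_\phi^{is}=\E$) matches the paper's, and your closing parenthetical remark that unitality need not survive the limit is exactly the content of the remark following the theorem.
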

\begin{proof}
Closability of $\E_0$ follows directly from the closability of $\delta$. Clearly the closure $\E$ is invariant under $(U_t)$ and $J$. If $\xi\in\HH$ and $P$ denotes the metric projection onto the closure of $\{a\in\AA\mid 0\leq\pi_l(a)\leq 1\}$, then
\begin{equation*}
\E(P(\xi))=\lim_{n\to\infty}\norm{\bar\delta_n(P(\xi))}^2\leq\liminf_{n\to\infty}\norm{\bar\delta_n(\xi)}^2\leq \E(\xi),
\end{equation*} 
where we used that $\eta\mapsto\norm{\delta_n(\eta)}^2$ is a Dirichlet form for all $n\in\IN$ by \Cref{prop:from_bdd_deriv_to_QMS}. The same argument applied to the matrix amplifications implies that $\E$ is a completely Dirichlet form.
\end{proof}

\begin{remark}
If $\AA$ is a unital Tomita algebra, then $\delta(1)=L(1)\delta(1)+R(1)\delta(1)=2\delta(1)$, hence $\delta(1)=0$. Therefore if $\delta$ is nearly inner, the completely Dirichlet form induced by $\delta$ is a quantum Dirichlet form. In general, this is not necessarily true, even in the commutative case. For example, if $(M,g)$ is a complete Riemannian manifold, $\AA=C_c^\infty(M)$ and $\delta=\nabla$, then the associated semigroup is the heat semigroup $(e^{t\Delta})$, which is not necessarily conservative (see \cite[Section 3.2]{Gri99} for example).
\end{remark}

We note for later use that while the form $\E$ is only defined in terms of inner product of elements in the range of $\delta$, it retains information about the inner product on the bimodule generated by the range of $\delta$ due to the product rule.

\begin{lemma}\label{lem:inner_prod_Tomita_bimodule}
Let $\AA$ be a Tomita algebra and $\H$ a Tomita bimodule over $\AA$. If $\delta\colon \AA\to\H$ is a symmetric derivation and $\E(a)=\norm{\delta(a)}_\H^2$, then
\begin{equation*}
\langle \delta(a)b,\delta(c)d\rangle_\H=\frac 1 2(\E(a,cdb^\flat)+\E(abd^\flat,c)-\E(bd^\flat,a^\sharp c))
\end{equation*}
for all $a,b,c,d\in\AA$.
\end{lemma}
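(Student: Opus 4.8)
The plan is to expand both sides using the product rule and the $\sharp$/$\flat$ compatibility from \Cref{lem:deriv_conjug}, and to use the bimodule axioms relating the left/right actions to the inner product.

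First I would record the basic inner-product identities that the definition of a Tomita bimodule forces. From $L(a)^\ast = L(a^\sharp)$ (property that $L$ is a $\ast$-representation of $(\AA,{}^\sharp)$) and $R(b)^\ast = R(b^\flat)$ (that $R$ is a $\ast$-representation of $(\AA^\op,{}^\flat)$), one gets for all $\xi,\eta\in\H$ and $a,b\in\AA$
\begin{equation*}
\langle L(a)\xi,\eta\rangle = \langle \xi, L(a^\sharp)\eta\rangle,\qquad \langle \xi R(b),\eta\rangle = \langle \xi, \eta R(b^\flat)\rangle.
\end{equation*}
Here I write $\xi b$ for $\xi R(b)$ and $a\xi$ for $L(a)\xi$. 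Next I would rewrite the left-hand side: since $\delta$ is a derivation, $\delta(cdb^\flat) = \delta(c)db^\flat + c\delta(d)b^\flat + cd\,\delta(b^\flat)$, and by \Cref{lem:deriv_conjug} $\delta(b^\flat) = \delta(b)^\flat = \J\U_{i/2}\delta(b)$. The idea is that $\E(a,cdb^\flat) = \langle\delta(a),\delta(cdb^\flat)\rangle$ splits into three terms, and similarly $\E(abd^\flat,c)$ and $\E(bd^\flat,a^\sharp c)$ split via the product rule applied to $\delta(abd^\flat)$ and $\delta(bd^\flat)$.

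The key computational step is to move the "outer" letters of each product across the inner product using the two displayed adjunction identities, turning every term into an inner product of the form $\langle\delta(x)\cdot(\text{something}),\delta(y)\cdot(\text{something})\rangle$. One checks that after collecting terms, the contributions involving $\delta$ applied to a genuine product (i.e.\ the "cross terms" where $\delta$ hits neither the first nor the would-be last factor) cancel in the combination $\E(a,cdb^\flat)+\E(abd^\flat,c)-\E(bd^\flat,a^\sharp c)$, leaving exactly $2\langle\delta(a)b,\delta(c)d\rangle$. The role of the $\flat$ on $b$ and $d$ is precisely to make the adjunction $\langle \xi, \eta R(b^\flat)\rangle = \langle \xi R(b),\eta\rangle$ produce the plain right action by $b$ and $d$ on the outer side; the role of the $\sharp$ on $c$ in the third term is to produce, via $\langle L(a^\sharp)\xi,\eta\rangle=\langle\xi,L(a)\eta\rangle$, the plain left action by $a$.

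The main obstacle I expect is purely bookkeeping: each of the three forms expands into three terms (nine total), and one must verify that six of them cancel in pairs while the remaining three combine with the factor $\tfrac12$ to give the single term on the left. A clean way to organize this is to compute $\E(a,cdb^\flat)$, $\E(abd^\flat,c)$ and $\E(bd^\flat,a^\sharp c)$ separately, each as a sum of three inner products, using only (i) the product rule, (ii) \Cref{lem:deriv_conjug} to handle $\delta$ of a $\flat$, and (iii) the adjunction identities above together with the fact that $L$ and $R$ commute. One should double-check that the derivation is defined on all the products appearing ($cdb^\flat$, $abd^\flat$, $bd^\flat$, $a^\sharp c$ all lie in $\AA$ since $\AA$ is an algebra closed under ${}^\sharp,{}^\flat$), so no domain issues arise. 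No closability or completion arguments are needed here since everything takes place in $\AA$ and $\H$.
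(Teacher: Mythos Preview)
Your approach is the same as the paper's: expand via the product rule, move letters across the inner product using the $L$- and $R$-adjunctions, and check cancellation. The paper does it slightly more economically, applying the product rule only once in each argument (splitting $cdb^\flat$ as $c\cdot(db^\flat)$, $abd^\flat$ as $a\cdot(bd^\flat)$, and $a^\sharp c$ as $a^\sharp\cdot c$), which produces six terms rather than nine or ten; four of them combine immediately to $2\langle\delta(a)b,\delta(c)d\rangle$ plus one cancellation via $\langle a\,\delta(bd^\flat),\delta(c)\rangle=\langle\delta(bd^\flat),a^\sharp\delta(c)\rangle$, leaving the residual
\[
\langle\delta(a),\,c\,\delta(db^\flat)\rangle-\langle\delta(bd^\flat),\,\delta(a)^\sharp c\rangle.
\]

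The one point where your plan is too optimistic is the claim that the remaining cancellations follow from the $L/R$ adjunction identities alone. They do not: after the easy cancellations you are left precisely with terms of the shape $\langle\xi,x\,\eta^\flat\rangle$ and $\langle\eta,\xi^\sharp x\rangle$ (with $\xi,\eta$ in the range of $\delta$ and $x\in\AA$), and showing these coincide requires the anti-unitarity of $\J$ together with the $\U$-adjunction $\langle\U_z\xi,\eta\rangle=\langle\xi,\U_{-\bar z}\eta\rangle$ from axiom~(d) of \Cref{def:Tomita_bimodule}. The paper carries this out explicitly:
\[
\langle\delta(bd^\flat),\delta(a)^\sharp c\rangle
=\langle\J(\delta(a)^\sharp),\J(\delta(bd^\flat)c^\flat)\rangle
=\langle\U_{-i/2}\delta(a),\U_{i/2}(c\,\delta(bd^\flat)^\flat)\rangle
=\langle\delta(a),c\,\delta(db^\flat)\rangle.
\]
So your toolbox (i)--(iii) is missing exactly this ingredient; once you add the $\J$/$\U$ identity above, your argument goes through and matches the paper's.
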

\begin{proof}
Note that $\E(a,b)=\langle \delta(a),\delta(b)\rangle$ for all $a,b\in\AA$ by the polarization identity. Let $a,b,c,d\in\AA$. By the product rule and \Cref{lem:deriv_conjug} we have
\begin{align*}
&\quad\;\E(a,cdb^\flat)+\E(abd^\flat,c)-\E(bd^\flat,a^\sharp c)\\
&=\langle \delta(a),\delta(c)db^\flat+c\delta(db^\flat)\rangle+\langle\delta(a)bd^\flat+a\delta(bd^\flat),\delta(c)\rangle-\langle \delta(bd^\flat),\delta(a^\sharp) c+a^\sharp\delta(c)\rangle\\
&=2\langle \delta(a)b,\delta(c)d\rangle+\langle \delta(a),c\delta(db^\flat)\rangle-\langle \delta(bd^\flat),\delta(a)^\sharp c\rangle.
\end{align*}
For the last summand we have
\begin{align*}
\langle \delta(bd^\flat),\delta(a)^\sharp c\rangle&=\langle \delta(bd^\flat)c^\flat,\delta(a)^\sharp \rangle\\
&=\langle \J(\delta(a)^\sharp), \J(\delta(bd^\flat)c^\flat)\rangle\\
&=\langle \U_{-i/2}\delta(a),\U_{i/2}c\cdot \J(\delta(bd^\flat))\rangle\\
&=\langle \U_{-i/2}\delta(a),\U_{i/2}(c\delta(bd^\flat)^\flat)\rangle\\
&=\langle \delta(a),c\delta(db^\flat)\rangle.
\end{align*}
This gives the desired identity.
\end{proof}

\section{Symmetric derivations associated with quantum Dirichlet forms}\label{sec:from_QMS_to_deriv}

In this section we prove the main result of this article, namely that every GNS-symmetric quantum Dirichlet form gives rise to a Tomita bimodule and derivation (Theorems \ref{thm:from_QMS_to_deriv} and \ref{thm:deriv_unique}). In the case when the reference weight is finite, this yields a one-to-one correspondence between closable nearly inner symmetric derivations with maximal domain and GNS-symmetric quantum Dirichlet forms (\Cref{cor:bijection_deriv_QMS}).

One of the key challenges when constructing a derivation associated with a quantum Dirichlet form is that the domain of the derivation has to be an algebra for the product rule to make sense.  In the tracially symmetric case it was proven by Davies and Lindsay \cite[Proposition 3.4]{DL92} that $D(\E)\cap \M$ is an algebra, and the derivation constructed by Cipriani and Sauvageot \cite{CS03} can be defined on this algebra.

Here we propose
\begin{equation*}
\AA_\E=\{a\in\AA_\phi\mid U_z a\in D(\E)\text{ for all }z\in\IC\}
\end{equation*}
as a replacement for $D(\E)\cap \M$ in the non-tracial case. We show that $\AA_\E$ is a Tomita algebra and a core for $\E$ (\Cref{thm:Tomita_algebra_QMS}). If $\phi$ is a trace, then $\AA_\E$ coincides with $D(\E)\cap \M$.

Once this is established, the construction of the Tomita bimodule $\H$ and the derivation $\delta$ can be summarized as follows: The pre-Hilbert space $\H$ consists of linear combinations of the (formal) elements $\delta(a)b$ with the inner product determined by \Cref{lem:inner_prod_Tomita_bimodule}, the left and right action of $\AA_\E$ are the ones suggested by the product rule and $\U_z$ and $\J$ are uniquely determined by the definition of a Tomita bimodule.

Since the inner product determined by \Cref{lem:inner_prod_Tomita_bimodule} is in general degenerate, the difficulty lies in proving that all these operations are well-defined. To prove this as well as the fact that $\AA_\E$ is a Tomita algebra, we crucially rely on approximation by bounded Dirichlet forms and the construction from \cite{Wir22} (upgraded here to cover the case of weights).

\subsection{The bounded case}

In the case when $\phi$ is finite, it was shown in \cite{Wir22} that every bounded GNS-symmetric quantum Dirichlet form $\E$ can be written as
\begin{equation*}
\E(\Lambda_\phi(x),\Lambda_\phi(y))=\phi((\partial(x)|\partial(y)),
\end{equation*}
where $\partial$ is a derivation with values in a von Neumann bimodule $F$ that carries some extra structure reflecting the GNS symmetry, similar to the definition of Tomita bimodules and correspondences in the present article. In this subsection we extend this construction to the case of general weights and show how it relates to our formulation in terms of Tomita bimodules.

We will rely on the following interplay between correspondences and von Neumann bimodules (see \cite[Proposition 3.1]{Wir22} in the case of a finite weight).

\begin{proposition}
If $\M$ is a von Neumann algebra and $\H$ is an $\M$-$\M$-correspondence, then $\L(L_2(\M)_\M,\H_\M)$ with the usual left and right actions and the $\M$-valued inner product $(x\vert y)=x^\ast y$ is a von Neumann $\M$-bimodule.

Moreover, the map
\begin{equation*}
\Phi\colon \L(L_2(\M)_\M,\H_\M)\bar\odot L_2(\M)\to\H,\,x\otimes\xi\mapsto x\xi
\end{equation*}
is a unitary bimodule map.

Conversely, if $F$ is a von Neumann $\M$-bimodule, then $F\bar\odot L_2(\M)$ is an $\M$-$\M$-corres\-pondence and the map
\begin{equation*}
\Psi\colon F\mapsto \L(L_2(\M)_\M,(F\bar\odot L_2(\M))_\M),\,\xi\mapsto a(\xi)
\end{equation*}
is a bijective bimodule map that preserves the $\M$-valued inner products.
\end{proposition}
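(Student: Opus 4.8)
The plan is to prove the three assertions in turn; at bottom everything is bookkeeping of module structures, the only genuine inputs being the identification $\L(L_2(\M)_\M,L_2(\M)_\M)=\M$ (valid because $\M$ acts in standard form on $L_2(\M)$), the bijection of \Cref{lem:bdd_vec_module_map} between left-bounded vectors and square-integrable module maps, and the density of bounded vectors in an $\M$-$\M$-correspondence (\cite[Theorem~1]{OOT17}, also used in the proof of \Cref{prop:bounded_Tomita_bimodule}). First I would record that $\L(L_2(\M)_\M,\H_\M)$ is the set of operators in $B(L_2(\M),\H)$ intertwining the two right $\M$-actions, hence a weak$^\ast$-closed subspace of $B(L_2(\M),\H)$ and so a dual Banach space; the $C^\ast$-module axioms for $(x|y)=x^\ast y$ are immediate, the module norm $\norm{x^\ast x}^{1/2}$ agrees with the operator norm, and the right action $x\mapsto x\cdot b$ is weak$^\ast$-continuous, so by the characterization of von Neumann modules recalled in \Cref{sec:prelim} it is a von Neumann $\M$-module. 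Normality of the left action, which completes ``bimodule'', will be read off from $\Phi$.

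For $\Phi$, the computation
\begin{equation*}
\langle x_1\otimes\xi_1,x_2\otimes\xi_2\rangle=\langle\xi_1,(x_1|x_2)\xi_2\rangle=\langle\xi_1,x_1^\ast x_2\xi_2\rangle=\langle x_1\xi_1,x_2\xi_2\rangle
\end{equation*}
shows $\Phi$ is a well-defined isometry; it intertwines the left $\M$-actions trivially, and the right ones because a right module map $x$ satisfies $x(Jb^\ast J\zeta)=(x\zeta)\cdot b$, which is exactly $\Phi((x\otimes\zeta)\cdot b)=\Phi(x\otimes\zeta)\cdot b$. For surjectivity: if $\eta\in\H$ is left-bounded then $L_\phi(\eta)\in\L(L_2(\M)_\M,\H_\M)$ by \Cref{lem:bdd_vec_module_map}, and picking $y_n\in\n_\phi\cap\n_\phi^\ast$ with $\norm{y_n}\le1$ and $y_n\to1$ strongly gives $\Phi(L_\phi(\eta)\otimes\Lambda_\phi^\prime(y_n))=\eta y_n\to\eta$ by normality of the right action; since left-bounded vectors are dense in $\H$, $\Phi$ is onto and hence unitary. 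Transporting the normal left action of $\M$ on $\H$ through $\Phi$ now shows the left action of $\M$ on $\L(L_2(\M)_\M,\H_\M)\bar\odot L_2(\M)$ is normal, so $\L(L_2(\M)_\M,\H_\M)$ is a von Neumann $\M$-bimodule.

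For the converse, $F\bar\odot L_2(\M)$ carries the left $\M$-action inherited from $F$ (normal since $F$ is a von Neumann bimodule) and the commuting right $\M$-action $\xi\otimes\zeta\mapsto\xi\otimes Jb^\ast J\zeta$ (normal as an amplification of $b\mapsto Jb^\ast J$ on the second leg), so it is an $\M$-$\M$-correspondence. Next, $a(\xi)(\zeta\cdot b)=\xi\otimes(Jb^\ast J\zeta)=(\xi\otimes\zeta)\cdot b$, so $a(\xi)$ is a right module map, bounded with $\norm{a(\xi)}\le\norm{\xi}_F$, and
\begin{equation*}
\langle\zeta_1,a(\xi)^\ast a(\eta)\zeta_2\rangle=\langle\xi\otimes\zeta_1,\eta\otimes\zeta_2\rangle=\langle\zeta_1,(\xi|\eta)\zeta_2\rangle,
\end{equation*}
so $\Psi$ preserves the $\M$-valued inner products; in particular it is isometric, hence injective with norm-closed range, and a direct check (using $\xi b\otimes\zeta=\xi\otimes b\zeta$ together with the module actions) shows it is a bimodule map. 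The identity $\Phi(\Psi(\xi)\otimes\zeta)=a(\xi)\zeta=\xi\otimes\zeta$, i.e.\ $\Phi\circ(\Psi\bar\odot\id)=\id$ on $F\bar\odot L_2(\M)$, exhibits $\Psi$ as the inverse operation to $\Phi$. For surjectivity of $\Psi$ itself I would note that $\Psi(F)$ is a closed submodule of the self-dual module $\L(L_2(\M)_\M,(F\bar\odot L_2(\M))_\M)$, hence orthogonally complemented, and that any $y$ in its complement satisfies $y^\ast a(\xi)=0$ for all $\xi\in F$, so $y^\ast$ annihilates the dense set of vectors $\xi\otimes\zeta$ and $y=0$; thus $\Psi$ is onto. (Alternatively one uses $L_\phi(\xi\otimes\Lambda_\phi^\prime(w))=\Psi(\xi\cdot w)$ together with $\phi(L_\phi(v)^\ast L_\phi(v))=\norm{v}^2$ and the density of left-bounded vectors.)

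The main obstacle is purely the bookkeeping of which copy of $\M$ or $\M^\prime$ acts on which tensor leg -- in particular that the module-map condition defining $\L(L_2(\M)_\M,\H_\M)$ is exactly compatibility with the $J\M J$-action on $L_2(\M)$, and that the $C^\ast$-module right action on $\L(L_2(\M)_\M,\H_\M)$ comes from the \emph{left} multiplication of $\M$ on $L_2(\M)$. The two surjectivity statements are the only points requiring nontrivial input -- density of bounded vectors in a correspondence for $\Phi$, and self-duality of von Neumann modules (or the $L^2$-type density of module maps) for $\Psi$; everything else is a one-line verification.
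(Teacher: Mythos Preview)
Your proof is correct and follows essentially the same route as the paper's. The only cosmetic differences are: for surjectivity of $\Phi$ the paper cites \cite[Lemma IX.3.3]{Tak03} directly (which yields a factorization $\xi=x(y\phi^{1/2})$ of every left-bounded vector), whereas you run the equivalent approximate-identity argument $\Phi(L_\phi(\eta)\otimes\Lambda_\phi'(y_n))=\eta y_n\to\eta$; and for surjectivity of $\Psi$ the paper represents the module map $y\mapsto x^\ast a(y)$ by an element $z\in F$ via self-duality and concludes $x=a(z)$, which is your orthogonal-complement argument unpacked (both end with the same density step that $(x-a(z))^\ast$ kills all elementary tensors). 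Your explicit reading of the normality of the left action off the unitary $\Phi$ is a nice touch the paper leaves implicit.
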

\begin{proof}
Clearly $\L(L_2(\M)_\M,\H_\M)$ is a $C^\ast$ $\M$-bimodule with the described operations. Since it is strongly dense in $B(L_2(\M),\H)$, it is a von Neumann $\M$-bimodule. We have
\begin{equation*}
\langle x\xi,y\eta\rangle=\langle \xi,x^\ast y\eta\rangle=\langle x\otimes\xi,y\otimes \eta\rangle.
\end{equation*}
Thus $\Phi$ is isometric. By \cite[Lemma IX.3.3]{Tak03} it has dense range. Thus $\Phi$ is unitary.

For the converse it is clear that $F\bar\odot L_2(\M)$ is an $\M$-$\M$-correspondence and $\Psi$ is a bimodule map that preserves the $\M$-valued inner products. It remains to show that $\Psi$ is surjective. Let $x\in \L(L_2(\M)_\M,\H_\M)$ and define
\begin{equation*}
T\colon F\to\M,\,y\mapsto x^\ast a(y).
\end{equation*}
As $T$ is a right module map and von Neumann bimodules are self-dual by \cite[Theorem 3.2.11]{Ske01}, there exists $z\in F$ such that
\begin{equation*}
x^\ast a(y)=T(y)=(z\vert y)=a(z)^\ast a(y).
\end{equation*}
Hence $x=a(z)$.
\end{proof}

\begin{proposition}\label{prop:deriv_bdd}
Let $\M$ be a von Neumann algebra and $\phi$ a normal semi-finite faithful weight on $\M$. If $\Phi\colon\M\to\M$ is a GNS-symmetric normal unital completely positive map, then there exists a normal Tomita bimodule $\H$ over $\AA_\phi$ and a symmetric derivation $\delta\colon \AA_\phi\to \H$ such that 
\begin{equation}\label{eq:deriv_bdd_QMS}
\langle \delta(a),\delta(a)\rangle_\H=\langle a,b-\Phi^{(2)}(b)\rangle
\end{equation}
for all $a,b\in\AA_\phi$. If $\phi$ is finite, then $\delta$ can be chosen inner.
\end{proposition}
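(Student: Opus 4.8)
The plan is to exhibit $\id-\Phi$ as the generator of a uniformly continuous GNS-symmetric quantum Markov semigroup and then run a weight version of the construction of \cite{Wir22}. Since $\Phi$ is normal, unital and completely positive, so is every power $\Phi^{n}$, hence $P_{t}:=e^{-t(\id-\Phi)}=e^{-t}\sum_{n\ge 0}\tfrac{t^{n}}{n!}\Phi^{n}$ is a uniformly continuous quantum Markov semigroup on $\M$; it is GNS-symmetric with respect to $\phi$ because $\Phi$ is, and its $L_{2}$-generator is $\L^{(2)}=\id-\Phi^{(2)}$, so that $\E:=\langle\,\cdot\,,(\id-\Phi^{(2)})\,\cdot\,\rangle$ is a bounded GNS-symmetric quantum Dirichlet form. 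I would then run the Christensen--Evans/Stinespring-type dilation of $\Phi$ relative to the semi-cyclic representation $(\pi_{\phi},L_{2}(\M,\phi),\Lambda_{\phi})$ in place of a cyclic vector, obtaining a von Neumann $\M$-bimodule $F$ carrying the extra structure of \cite{Wir22} --- an anti-unitary involution $\mathfrak j$ on $F$ with $\mathfrak j(x\xi y)=y^{\ast}(\mathfrak j\xi)x^{\ast}$ and a modular-type one-parameter group reflecting $\sigma^{\phi}$ --- together with a derivation $\partial\colon\n_{\phi}\cap\n_{\phi}^{\ast}\to F$ such that $\phi\big((\partial(x)\,|\,\partial(y))\big)=\langle\Lambda_{\phi}(x),(\id-\Phi^{(2)})\Lambda_{\phi}(y)\rangle$, with $\mathfrak j$ and the group intertwining $\partial$ with the $\ast$-operation and with $\sigma^{\phi}$ respectively.

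From here I would pass to the correspondence picture using the preceding proposition: $\H':=F\bar\odot L_{2}(\M,\phi)$ is an $\M$-$\M$-correspondence, $\mathfrak j$ together with $J_{\phi}$ induces an anti-unitary involution $\J$ on $\H'$ with $\J(x\xi y)=y^{\ast}(\J\xi)x^{\ast}$, and the group on $F$ together with $(\Delta_{\phi}^{it})$ induces a strongly continuous unitary group $(\U_{t})$ on $\H'$ with $\U_{t}(x\xi y)=\sigma^{\phi}_{t}(x)(\U_{t}\xi)\sigma^{\phi}_{t}(y)$ and $\J\U_{t}=\U_{t}\J$; thus $\H'$ is a Tomita $\M$-$\M$-correspondence. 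Applying \Cref{prop:bounded_Tomita_bimodule} to $\H'$ then yields a normal Tomita bimodule $\H$ over $\AA_{\phi}$, dense in $\H'$, and it remains to produce $\delta$. For $a=\Lambda_{\phi}(x)\in\AA_{\phi}$ the value $\delta(a)$ should be the vector of $\H'$ with $L_{\phi}(\delta(a))=\partial(x)$, which is left-bounded because $\E(a)<\infty$; since every $\sigma^{\phi}_{z}(x)$ again lies in $\n_{\phi}\cap\n_{\phi}^{\ast}$ and $\partial$ intertwines with $\sigma^{\phi}$, the orbit $z\mapsto\U_{z}\delta(a)$ is entire with bounded values, so in fact $\delta(a)\in\H$. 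The product rule for $\partial$ gives axiom (a) of a symmetric derivation, the $\mathfrak j$-intertwining gives (b), the $\sigma^{\phi}$-intertwining gives (c), and the defining inner-product identity rewrites as $\langle\delta(a),\delta(b)\rangle_{\H}=\langle a,b-\Phi^{(2)}(b)\rangle$. When $\phi$ is finite a rescaling makes it a state, so $\partial$ --- hence $\delta$ --- is inner by \cite{Wir22}, implemented by a bounded vector which, replacing it by its symmetrizations under $\J$ and averaging over $(\U_{t})$ as in the proof of \Cref{prop:rep_vector_nice}, can be taken $\J$-real and $(\U_{t})$-invariant.

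The main obstacle is the weight version of \cite{Wir22}. In the state case the dilation is anchored at a cyclic and separating vector, so the derivation is defined on all of $\M$ and the extra structure is essentially automatic; for a general weight there is no such vector, the derivation lives only on the definition ideal $\n_{\phi}\cap\n_{\phi}^{\ast}$, and one must check that the conjugation $\mathfrak j$, the modular-type group, and the intertwining identities with $\sigma^{\phi}$ and with $\ast$ all persist on this non-unital domain, and that $F$ remains a \emph{normal} von Neumann bimodule. Granting this, the remaining steps --- transporting the structure along $F\mapsto F\bar\odot L_{2}(\M,\phi)$, cutting down to the normal Tomita bimodule via \Cref{prop:bounded_Tomita_bimodule}, and verifying that $\delta$ takes values in $\H$ rather than merely in $\H'$ --- are routine, the only delicate point being the interplay between $\sigma^{\phi}$ and the GNS-symmetry of $\partial$ that keeps $\delta(\AA_{\phi})$ inside the bounded, entire-analytic vectors.
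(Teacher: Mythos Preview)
Your plan is essentially the paper's proof: build the Stinespring/GNS von Neumann bimodule $F$ for $\Phi$, equip it with the $\sigma^\phi$-compatible group $V_t$ and conjugation from \cite{Wir22}, pass to the correspondence $\K=F\bar\odot L_2(\M,\phi)$, and then cut down via \Cref{prop:bounded_Tomita_bimodule}. The ``main obstacle'' you flag---carrying \cite{Wir22} over to weights---is exactly the content the paper supplies explicitly: it writes $\partial(x)=x\otimes_\Phi 1-1\otimes_\Phi x$ on all of $\M$ (not just $\n_\phi\cap\n_\phi^\ast$), checks $\phi((\partial(x)\vert\partial(x)))=\langle x\phi^{1/2},(I-\Phi^{(2)})x\phi^{1/2}\rangle$ for $x\in\n_\phi$, defines $\U_t=V_t\otimes\Delta_\phi^{it}$ on $\K$, and verifies by a direct KMS-type computation that $\Lambda(\sum_j x_j\otimes_\Phi y_j)\mapsto\Lambda(\sum_j\sigma^\phi_{i/2}(y_j)^\ast\otimes_\Phi\sigma^\phi_{i/2}(x_j)^\ast)$ is isometric, giving $\J$. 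Two minor points: the detour through $P_t=e^{-t(\id-\Phi)}$ is unnecessary (the paper works with $\Phi$ directly), and the conjugation is more naturally built on $\K$ rather than on $F$ itself, since the isometry check uses the weight $\phi$ via $\Lambda$.
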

\begin{proof}
First let $F$ be the GNS von Neumann bimodule (or Stinespring bimodule) associated with $\Phi$. It is constructed as follows. On $\{\sum_j x_j\otimes y_j\mid \sum_j x_j y_j=0\}$ define an $\M$-valued sesquilinear form by
\begin{equation*}
\left\langle\sum_j x_j\otimes y_j,\sum_k \tilde x_k\otimes \tilde y_k\right\rangle=\frac 1 2\sum_{j,k}y_j^\ast \Phi(x_j^\ast \tilde x_k)\tilde y_k.
\end{equation*}
Let $F_0$ be the $C^\ast$ $\M$-bimodule obtained after separation and completion with respect to $\norm{\cdot}_F=\norm{(\cdot\vert\cdot)}^{1/2}$. Then $F$ is the strong closure of $F_0$ inside $B(L_2(\M),F_0\bar\odot L_2(\M))$. We write $\sum_j x_j\otimes_\Phi y_j$ for the image of $\sum_j x_j\otimes y_j$ in $F$.

The map 
\begin{equation*}
\partial\colon \M\to F,\,x\mapsto x\otimes_\Phi 1-1\otimes_\Phi x
\end{equation*}
is a derivation and
\begin{equation*}
(\partial(x)\vert\partial(y))=\frac 1 2((I-\Phi)(x)^\ast y+x^\ast(I-\Phi)(y)-(I-\Phi)(x^\ast y)).
\end{equation*}
As in \cite[Proposition 4.4]{Wir22} one checks that the map
\begin{equation*}
\sum_j x_j\otimes_\Phi y_j\mapsto \sum_j \sigma^\phi_t(x_j)\otimes_\Phi \sigma^\phi_t(y_j)
\end{equation*}
is well-defined and extends to a weak$^\ast$ continuous linear map $V_t$ on $F$ that satisfies $V_s V_t=V_{s+t}$, $V_t(x\xi y)=\sigma^\phi_t(x)V_t \xi \sigma^\phi_t(y)$ and $(V_t\xi\vert V_t \eta)=\sigma^\phi_t((\xi\vert\eta))$. Since $\Phi$ is normal, a direct computation shows that $V_t\xi\to\xi$ strongly as $t\to 0$ if $\xi=\sum_j x_j\otimes_\Phi y_j$. For arbitrary $\xi\in F$ this follows from the strong density of elements of this form in $F$ by standard arguments.

Let $\K=F\bar\odot L_2(\M)$, $\n_\phi(F)=\{\xi\in F\mid \phi((\xi\vert\xi))<\infty\}$ and
\begin{equation*}
\Lambda\colon\n_\phi(F)\to \K,\,\xi\mapsto L_\phi^{-1}(a(\xi)).
\end{equation*}
Since $\phi((\xi\vert\xi))=\phi(a(\xi)^\ast a(\xi))$, this map is well-defined by \Cref{lem:bdd_vec_module_map}. Moreover, it has dense range by \cite[Lemma IX.3.3]{Tak03}. If $x\in \n_\phi$, then
\begin{align*}
\phi((\partial(x)\vert \partial(x)))&=\frac 1 2\phi((I-\Phi)(x)^\ast x+x^\ast(I-\Phi)(x)-(I-\Phi)(x^\ast x))\\
&=\frac 1 2\phi((I-\Phi)(x)^\ast x+x^\ast(I-\Phi)(x))\\
&=\phi(x^\ast (x-\Phi(x))\\
&=\langle x\phi^{1/2},(I-\Phi^{(2)})x\phi^{1/2}\rangle\\
&\leq \norm{I-\Phi^{(2)}}\phi(x^\ast x).
\end{align*}
Thus $\partial(x)\in \n_\phi(F)$. If we set $\delta(x\phi^{1/2})=\Lambda(\partial(x))$, then \eqref{eq:deriv_bdd_QMS} follows from the last computation by polarization.

For $\xi_1,\xi_2\in F$ and $\eta_1,\eta_2\in L_2(\M)$ we have
\begin{align*}
\langle V_t\xi_1\otimes \Delta_\phi^{it}\eta_1,V_t\xi_2\otimes\Delta_\phi^{it}\eta_2\rangle&=\langle \Delta_\phi^{it}\eta_1,(V_t\xi_1\vert V_t \xi_2)\Delta_\phi^{it}\eta_2\rangle\\
&=\langle  \Delta_\phi^{it}\eta_1,\sigma^\phi_t((\xi_1\vert\xi_2))\Delta_\phi^{it}\eta_2\rangle\\
&=\langle \eta_1,(\xi_1\vert\xi_2)\eta_2\rangle\\
&=\langle \xi_1\otimes\eta_1,\xi_2\otimes\eta_2\rangle.
\end{align*}
Thus the map $\xi\otimes\eta\mapsto V_t\xi\otimes \Delta_\phi^{it}\eta$ extends to a unitary operator $\U_t$ on $\K$. The pointwise strong continuity of $(V_t)$ and $(\Delta_\phi^{it})$ implies the strong continuity of $(\U_t)$.

If $x_j,y_j\in \n_\phi\cap \n_\phi^\ast$ are entire analytic with $\sum_j x_j y_j=0$ we have
\begin{align*}
&\quad\;\left\lVert \Lambda\left(\sum_j\sigma^\phi_{i/2}(y_j)^\ast\otimes_\Phi\sigma^\phi_{i/2}(x_j)^\ast\right)\right\rVert^2\\
&=\sum_{j,k}\phi((\sigma^\phi_{i/2}(y_j)^\ast\otimes_\Phi\sigma^\phi_{i/2}(x_j)^\ast\vert \sigma^\phi_{i/2}(y_k)^\ast\otimes_\Phi\sigma^\phi_{i/2}(x_k)^\ast))\\
&=\frac 1 2\sum_{j,k}\phi(\sigma^\phi_{i/2}(x_j)\Phi(\sigma^\phi_{i/2}(y_j)\sigma^\phi_{i/2}(y_k)^\ast)\sigma^\phi_{i/2}(x_k)^\ast)\\
&=\frac1  2\sum_{j,k}\phi(\Phi(\sigma^\phi_{i/2}(y_j)\sigma^\phi_{-i/2}(y_k^\ast))\sigma^\phi_{-i/2}(x_k^\ast x_j))\\
&=\frac 1 2\sum_{j,k}\phi(\sigma^\phi_{i/2}(y_j)\sigma^\phi_{-i/2}(y_k^\ast)\sigma^\phi_{-i/2}(\Phi(x_k^\ast x_j)))\\
&=\frac 1 2\sum_{j,k}\phi(\sigma^\phi_{-i/2}(y_k^\ast\Phi(x_k^\ast x_j)y_j))\\
&=\left\lVert \Lambda\left(\sum_j x_j\otimes_\Phi y_j\right)\right\rVert^2,
\end{align*}
where we used the GNS-symmetry of $\Phi$ and in particular the commutation with the modular group.

Thus
\begin{equation*}
\Lambda\left(\sum_j x_j\otimes_\Phi y_j\right)\mapsto\Lambda\left(\sum_j\sigma^\phi_{i/2}(y_j)^\ast\otimes_\Phi\sigma^\phi_{i/2}(x_j)^\ast\right)
\end{equation*}
is well-defined and extends to an anti-unitary operator $\J$ on $\K$. It is not hard to check that $(\U_t)$ and $\J$ make $\K$ into an $\M$-$\M$-Tomita correspondence. Clearly, $\delta(a)$ is bounded and entire analytic for $(\U_t)$ for every $a\in\AA_\phi$. Hence the Tomita bimodule $\H$ constructed from $\K$ by means of \Cref{prop:bounded_Tomita_bimodule} does the job.

By \cite[Theorem 2.1]{CE79} there exists $\xi\in F$ such that $\partial(x)=x\xi-\xi x$. If $\phi$ is finite, then $\Lambda(\xi)$ is a well-defined left-bounded vector in $\K$. For analytic $x\in \M$ we have
\begin{equation*}
L(x\phi^{1/2})\xi-R(x\phi^{1/2})\xi=\Lambda(x\xi)-\Lambda(x)\sigma^\phi_{-i/2}(x)=\Lambda(x\xi-\xi x).
\end{equation*}
Thus $L(a)\xi-R(a)\xi=\delta(a)$ for all $a\in\AA_\phi$. Since $\delta$ is bounded, $\xi$ is also right-bounded. In this case we can take the Tomita bimodule generated by $\H$ and $\xi$ instead of $\H$.
\end{proof}

\subsection{\texorpdfstring{The Tomita algebra $\AA_\E$}{The Tomita algebra AE}}

As announced before, we now turn to the domain of definition of the derivation associated with the quantum Dirichlet form $\E$.

\begin{theorem}\label{thm:Tomita_algebra_QMS}
Let $\M$ be a von Neumann algebra and $\phi$ a normal semi-finite faithful weight on $\M$. If $\E$ is a GNS-symmetric quantum Dirichlet form on $L_2(\M,\phi)$, then
\begin{equation*}
\AA_\E=\{a\in\AA_\phi\mid U_z a\in D(\E)\text{ for all }z\in\IC\}
\end{equation*}
is a Tomita subalgebra of $\AA_\phi$ and
\begin{equation*}
\E(ab)^{1/2}\leq \norm{\pi_l(a)}\E(b)^{1/2}+\E(a)^{1/2}\norm{\pi_r(b)}
\end{equation*}
for all $a,b\in \AA_\E$.

 For every $a\in D(\E)\cap \Lambda_\phi(\n_\phi\cap\n_\phi^\ast)$ there exists a sequence $(a_n)$ in $\AA_\E$ such that
\begin{itemize}
\item $a_n\to a$, $a_n^\sharp\to a^\sharp$ in $(D(\E),\norm\cdot_\E)$,
\item $\norm{\pi_l(a_n)}\leq \norm{\pi_l(a)}$,
\item $\pi_l(a_n)\to \pi_l(a)$ in the strong$^\ast$ topology.
\end{itemize}
In particular, $\AA_\E$ is a core for $\E$ and $\{x\in \n_\phi\cap\n_\phi^\ast\mid \Lambda_\phi(x)\in\AA_\E\}\cap \M_1$ is strongly$^\ast$ dense in $\M_1$.
\end{theorem}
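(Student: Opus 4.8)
The plan is to reduce the product rule to the bounded case treated in \Cref{prop:deriv_bdd}, and to produce $\AA_\E$ by Gaussian smoothing along the modular group. Let $\L$ denote the $\sigma$-weak generator of $(P_t)$ on $\M$ and, for $\lambda>0$, let $\Phi_\lambda=\lambda(\L+\lambda)^{-1}=\lambda\int_0^\infty e^{-\lambda t}P_t\,dt$. As $(P_t)$ is a GNS-symmetric quantum Markov semigroup, $\Phi_\lambda$ is a normal unital completely positive map, GNS-symmetric with respect to $\phi$, with $L_2$-implementation $\Phi_\lambda^{(2)}=\lambda(\L^{(2)}+\lambda)^{-1}$; so \Cref{prop:deriv_bdd} furnishes a normal Tomita bimodule $\H_\lambda$ over $\AA_\phi$ and a symmetric derivation $\delta_\lambda\colon\AA_\phi\to\H_\lambda$ with $\norm{\delta_\lambda(a)}^2=\langle a,a-\Phi_\lambda^{(2)}(a)\rangle$ for all $a\in\AA_\phi$. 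Writing $\E_\lambda$ for the bounded form of $\lambda\L^{(2)}(\L^{(2)}+\lambda)^{-1}$, so that $\E_\lambda(a)=\lambda\norm{\delta_\lambda(a)}^2$ on $\AA_\phi$, the product rule for $\delta_\lambda$ together with axiom (a) of \Cref{def:Tomita_bimodule} gives
\[
\E_\lambda(ab)^{1/2}=\sqrt{\lambda}\,\norm{a\,\delta_\lambda(b)+\delta_\lambda(a)\,b}\le\norm{\pi_l(a)}\,\E_\lambda(b)^{1/2}+\E_\lambda(a)^{1/2}\,\norm{\pi_r(b)}
\]
for all $a,b\in\AA_\phi$. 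Since $\E_\lambda(\xi)\nearrow\E(\xi)\in[0,\infty]$ as $\lambda\to\infty$ by the spectral theorem, with the limit finite exactly for $\xi\in D(\E)$, letting $\lambda\to\infty$ shows that $D(\E)\cap\AA_\phi$ is a subalgebra of $\AA_\phi$ on which $\E(ab)^{1/2}\le\norm{\pi_l(a)}\E(b)^{1/2}+\E(a)^{1/2}\norm{\pi_r(b)}$. Combining this with the $(U_z)$-invariance of $\AA_\phi$ and $\E\circ J_\phi=\E$, one checks that $\AA_\E$ is closed under products ($U_z(ab)=(U_za)(U_zb)\in D(\E)$), under $\sharp$ ($U_z(a^\sharp)=(U_{\bar z}a)^\sharp=J_\phi U_{\bar z-i/2}a\in D(\E)$) and hence under $\flat$, and under each $U_z$; the algebraic axioms of a left Hilbert algebra and a Tomita algebra are inherited from $\AA_\phi$.

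For the approximation claim, given $a=\Lambda_\phi(x)\in D(\E)\cap\Lambda_\phi(\n_\phi\cap\n_\phi^\ast)$ I would take $a_n=\sqrt{n/\pi}\int_\IR e^{-nt^2}\Delta_\phi^{it}a\,dt$. A standard smoothing computation (cf. \cite[Theorem VI.2.2, Lemma VI.2.4]{Tak03}) gives $a_n\in\AA_\phi$ with $\Delta_\phi^k a_n\in\Lambda_\phi(\n_\phi\cap\n_\phi^\ast)$, as well as $a_n\to a$ and $a_n^\sharp\to a^\sharp$ in $L_2$ and, since $\pi_l(a_n)=\sqrt{n/\pi}\int_\IR e^{-nt^2}\sigma^\phi_t(\pi_l(a))\,dt$, the bounds $\norm{\pi_l(a_n)}\le\norm{\pi_l(a)}$ and $\pi_l(a_n)\to\pi_l(a)$ strongly$^\ast$. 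Because $a\in D(\E)$ and $\E\circ\Delta_\phi^{it}=\E$, the curve $t\mapsto\Delta_\phi^{it}a$ is bounded and continuous in $(D(\E),\norm\cdot_\E)$; and since $U_za_n=\sqrt{n/\pi}\int_\IR e^{-n(t-z)^2}\Delta_\phi^{it}a\,dt$ with the kernel $t\mapsto e^{-n(t-z)^2}$ integrable over $\IR$ for every $z\in\IC$, the vector $U_za_n$ is a Bochner integral in $(D(\E),\norm\cdot_\E)$, hence lies in $D(\E)$, so $a_n\in\AA_\E$. As smoothing commutes with $(\L^{(2)})^{1/2}$ and, via $a_n^\sharp=\Lambda_\phi(x_n^\ast)$, is the analogous average of $a^\sharp$, the convergences $a_n\to a$ and $a_n^\sharp\to a^\sharp$ also hold in $\norm\cdot_\E$ (in the latter case whenever $a^\sharp\in D(\E)$).

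The remaining assertions follow by feeding the previous paragraph into the density results of \Cref{sec:QMS}: applied to $a\in\AA_\phi\cap D(\E)$, a core for $\E$ by \Cref{lem:bounded_elements_form_core}, it shows $\AA_\E$ is a core for $\E$, hence dense in $L_2(\M,\phi)$; applied to those $x\in\n_\phi\cap\n_\phi^\ast$ with $\Lambda_\phi(x)\in\AA_\phi\cap D(\E)$, which are strong$^\ast$ dense in $\M_1$ by \Cref{lem:bounded_elements_strong_dense}, it shows $\{x\in\n_\phi\cap\n_\phi^\ast\mid\Lambda_\phi(x)\in\AA_\E\}\cap\M_1$ is strong$^\ast$ dense in $\M_1$, whence $\pi_l(\AA_\E)''=\pi_\phi(\M)$. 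Finally, $\pi_l(\AA_\E)$ is a $\ast$-algebra of bounded operators, so by the Kaplansky density theorem $\overline{\operatorname{lin}\{ab:a,b\in\AA_\E\}}=\overline{\pi_l(\AA_\E)\AA_\E}=\overline{\pi_\phi(\M)\AA_\E}\supseteq\overline{\AA_\E}=L_2(\M,\phi)$, which supplies the last axiom needed for $\AA_\E$ to be a Tomita algebra.

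The main obstacle is the product-rule step. One genuinely needs \Cref{prop:deriv_bdd} to supply the approximating derivations $\delta_\lambda$ on the \emph{whole} Tomita algebra $\AA_\phi$, so that the product rule estimate holds for arbitrary $a,b\in\AA_\phi$, and one needs the monotone convergence $\E_\lambda\nearrow\E$ to turn a uniform bound on $\E_\lambda(ab)$ into the statement $ab\in D(\E)$. A lesser difficulty is verifying that the Gaussian smoothing lands in $\AA_\E$ and not merely in $\AA_\phi\cap D(\E)$, i.e. that \emph{every} complex modular translate of $a_n$ stays in $D(\E)$; this is exactly where one uses that the Gaussian kernel remains integrable under complex shifts and that $t\mapsto\Delta_\phi^{it}a$ is continuous in the form norm.
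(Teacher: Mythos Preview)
Your proof is correct and follows the same strategy as the paper: bounded approximation via \Cref{prop:deriv_bdd} to get the product inequality, Gaussian smoothing along the modular group to land in $\AA_\E$, then the density lemmas of \Cref{sec:QMS}. The only differences are technical choices---you use the resolvent forms $\E_\lambda$ where the paper uses $\E_t(a)=t^{-1}\langle a,(I-P_t^{(2)})a\rangle$, and your Bochner-integral argument for $U_za_n\in D(\E)$ (resting on strong continuity of $(\Delta_\phi^{it})$ in $(D(\E),\norm\cdot_\E)$) and your direct commutation argument for the form-norm convergence $a_n\to a$ replace, respectively, the paper's appeal to Jensen's inequality for the lower-semicontinuous convex function $\E$ and a weak-compactness/lower-semicontinuity step; your versions are arguably cleaner.
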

\begin{proof}
Let $(P_t)$ denote the quantum dynamical semigroup associated with $\E$. By \Cref{prop:deriv_bdd} for every $t\geq 0$ there exists a normal Tomita bimodule $\H_t$ over $\AA_\phi$ and a symmetric derivation $\delta_t\colon \AA_\phi\to \H_t$ such that
\begin{equation*}
\norm{\delta_t(a)}_{\H_t}^2=\frac 1 t\langle a-P_t^{(2)}(a),a\rangle
\end{equation*}
for all $a\in \AA_\phi$.

Thus, if $a,b\in\AA_\E$, then
\begin{align*}
\frac 1 t\langle ab-P_t^{(2)}(ab),ab\rangle&=\norm{\delta_t(ab)}_{\H_t}^2\\
&=\norm{a\delta_t(b)+\delta_t(a)b}_{\H_t}^2\\
&\leq (\norm{\pi_l(a)}\norm{\delta_t(b)}_{\H_t}+\norm{\delta_t(a)}_{\H_t}\norm{\pi_r(b)})^2.
\end{align*}
By the spectral theorem, the left side converges to $\E(ab)$, while the right side converges to $(\norm{\pi_l(a)}\E(b)^{1/2}+\E(a)^{1/2}\norm{\pi_r(b)})^2$. Therefore $ab\in D(\E)$ and the claimed inequality from the  theorem holds. Replacing $a$ and $b$ by $U_z a$ and $U_z b$, respectively, one sees that $U_z(ab)\in D(\E)$. Thus $ab\in \AA_\E$.

Clearly, $\AA_\E$ is invariant under $J$ and $U_z$. The only property missing for a Tomita algebra is the density of $\AA_\E^2$ in $\AA_\E$. We will prove this later.

The density properties follow from a standard approximation argument (see \cite[Lemma 1.3]{Haa75} for example). For $a\in D(\E)\cap \Lambda_\phi(\n_\phi\cap\n_\phi^\ast)$ let
\begin{equation*}
a_n=\sqrt{\frac n\pi}\int_\IR e^{-nt^2} \Delta_\phi^{it}a\,dt.
\end{equation*}
Then $a_n\in\AA_\phi$ with 
\begin{equation*}
U_z a_n=\sqrt{\frac n\pi}\int_\IR e^{-n(t-z)^2} \Delta_\phi^{it}a\,dt
\end{equation*}
and $a_n\to a,a_n^\sharp\to a^\sharp$ in $L_2(\M,\phi)$, $\norm{\pi_l(a_n)}\leq\norm{\pi_l(a)}$ and $\pi_l(a_n)\to\pi_l(a)$ in the strong$^\ast$ topology. By Jensen's inequality for vector-valued functions \cite[Theorem 3.10]{Per74} and the $(U_t)$-invariance of $\E$ we get
\begin{equation*}
\E(U_z a_n)\leq \sqrt{\frac n\pi}\int_\IR e^{-n(t-\Re z)^2} e^{n(\Im z)^2}\E(a)\,dt\leq e^{n(\Im z)^2}\E(a).
\end{equation*}
Thus $U_z a_n\in D(\E)$ and $z\mapsto \E(U_z a_n)$ is locally bounded. This means $a_n\in\AA_\E$.

Since $\E(a_n)\leq \E(a)$, there exists a subsequence of $(a_n)$ that converges weakly in $(D(\E),\langle\cdot,\cdot\rangle_\E)$. As $a_n\to a$ in $L_2(\M,\phi)$, any subsequential weak limit of $(a_n)$ in $(D(\E),\langle\cdot,\cdot\rangle_\E)$ is necessarily $a$. Hence $(a_n)$ itself converges weakly to $a$. Finally, $\E$ is weakly lower semicontinuous by the Hahn--Banach theorem, so that
\begin{equation*}
\E(a)\leq\liminf_{n\to\infty}\E(a_n)\leq \limsup_{n\to\infty}\E(a_n)\leq \E(a).
\end{equation*}
This implies that $a_n\to a$ strongly in $(D(\E),\langle\cdot,\cdot\rangle)$.

That $\AA_\E$ is a core for $\E$ now follows from the fact that $\AA_\phi\cap D(\E)$ is a core for $\E$ by \Cref{lem:bounded_elements_form_core} or from an application of Lemma \ref{lem:approx}. The last density statement follows since $\{x\in \n_\phi\cap\n_\phi^\ast\mid\Lambda_\phi(x)\in D(\E)\}\cap \M_1$ is strongly$^\ast$ dense in $\M_1$ by \Cref{lem:bounded_elements_strong_dense}.

In particular, there exists a sequence $a_n\in \AA_\E$ such that $\pi_l(a_n)\to 1$ strongly. Thus if $b\in\AA_\E$, then $a_n b=\pi_l(a_n)b\to b$. Hence $\AA_\E^2$ is dense in $\AA_\E$, which completes the proof that $\AA_\E$ is a Tomita algebra.
\end{proof}

\begin{definition}
Let $\M$ be a von Neumann algebra and $\phi$ a normal semi-finite faithful weight on $\M$. If $\E$ is a GNS-symmetric quantum Dirichlet form on $L_2(\M,\phi)$, then
\begin{equation*}
\AA_\E=\{a\in\AA_\phi\mid U_za \in D(\E)\text{ for all }z\in\IC\}
\end{equation*}
is called the \emph{Tomita algebra associated with $\E$}.
\end{definition}

In the definition of $\AA_\E$, the map $t\mapsto U_t a$ is only assumed to have an extension to $\IC$ that is analytic with respect to the $L_2$ norm. Since $U_t a\in D(\E)$ for all $t\in \IR$, one could also require that the extension is analytic with respect to the form norm $\norm\cdot_\E$. As it turns out, this is automatically true for $a\in\AA_\E$. We prove this in two steps.

\begin{lemma}\label{lem:res_generator_group}
Let $H$ be a Hilbert space, $A$, $B$ strongly commuting positive self-adjoint operators on $H$ with $B$ non-singular. Let $\langle \xi,\eta\rangle_A=\langle \xi,\eta\rangle+\langle A\xi,A\eta\rangle$ for $\xi,\eta\in D(A)$. The restriction of $(B^{it})$ to $D(A)$ is a strongly continuous unitary group with respect to $\langle\cdot,\cdot\rangle_A$, the operator $\tilde B$ that is the restriction of $B$ to $D(\tilde B)=\{\xi\in D(A)\cap D(B)\mid B\xi\in D(A)\}$ is non-singular, positive and self-adjoint with respect to $\langle \cdot,\cdot\rangle_A$, and $B^{it}|_{D(A)}=\tilde B^{it}$ for all $t\in \IR$.
\end{lemma}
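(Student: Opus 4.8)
The plan is to diagonalize $A$ and $B$ simultaneously and then read off all three assertions from the associated multiplication operators. Since $A$ and $B$ are strongly commuting positive self-adjoint operators, the multiplication-operator form of the spectral theorem for commuting self-adjoint operators provides a measure space $(X,\mu)$, a unitary $W\colon H\to L^2(X,\mu)$ and measurable functions $f,g\colon X\to[0,\infty)$, with $g>0$ $\mu$-almost everywhere (this is where non-singularity of $B$ enters), such that $WAW^\ast$ and $WBW^\ast$ are multiplication by $f$ and $g$, respectively. Under $W$ the domain $D(A)$ corresponds to $\{h\in L^2(X,\mu)\mid fh\in L^2(X,\mu)\}=L^2(X,(1+f^2)\mu)$, and on this space $\langle\cdot,\cdot\rangle_A$ becomes the inner product $(h_1,h_2)\mapsto\int_X(1+f^2)h_1\overline{h_2}\,d\mu$; that is, $W$ restricts to a unitary from $(D(A),\langle\cdot,\cdot\rangle_A)$ onto $L^2(X,(1+f^2)\mu)$.

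I would then deal with the group $(B^{it})$. Under $W$ it becomes multiplication by $g^{it}$, and since $\abs{g^{it}}=1$ $\mu$-a.e., this is a unitary of $L^2(X,(1+f^2)\mu)$ with group law $g^{i(s+t)}=g^{is}g^{it}$. Strong continuity with respect to $\langle\cdot,\cdot\rangle_A$ follows from dominated convergence: for $h\in L^2(X,(1+f^2)\mu)$ the integrand $(1+f^2)\abs{g^{it}-1}^2\abs{h}^2$ is bounded by $4(1+f^2)\abs{h}^2\in L^1(X,\mu)$ and tends to $0$ pointwise a.e.\ as $t\to 0$. This gives the first assertion.

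For $\tilde B$, a direct comparison of domains shows that under $W$ the set $D(\tilde B)=\{\xi\in D(A)\cap D(B)\mid B\xi\in D(A)\}$ corresponds exactly to the maximal multiplication domain $\{h\in L^2(X,(1+f^2)\mu)\mid gh\in L^2(X,(1+f^2)\mu)\}$: the inclusion ``$\subseteq$'' uses $\xi\in D(A)$ together with $B\xi\in D(A)$, and for ``$\supseteq$'' one notes that $1+f^2\geq 1$, so $\int_X(1+f^2)g^2\abs{h}^2\,d\mu<\infty$ forces $\int_X g^2\abs{h}^2\,d\mu<\infty$, i.e.\ $\xi\in D(B)$. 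Hence $\tilde B$ is, up to $W$, multiplication by the $\mu$-a.e.\ positive function $g$ on $L^2(X,(1+f^2)\mu)$ with its maximal domain, which is a non-singular positive self-adjoint operator; and the functional calculus for multiplication operators gives that its unitary group $\tilde B^{it}$ is multiplication by $g^{it}$ on $L^2(X,(1+f^2)\mu)$, which is precisely the restriction of $B^{it}$ to $D(A)$. This settles the remaining assertions.

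The argument is essentially bookkeeping once the simultaneous diagonalization is in place; the only points that need a little care are the identification of $D(\tilde B)$ with the maximal multiplication-by-$g$ domain inside $L^2(X,(1+f^2)\mu)$ and the consistency of the functional calculus of $\tilde B$ (computed in the weighted space) with that of $B$ (computed in $H$) — both of which are transparent in the multiplication model. A diagonalization-free variant is also available: $B^{it}$ commutes with $A$, hence preserves $D(A)$ with $AB^{it}=B^{it}A$ on $D(A)$, which directly yields unitarity and strong continuity of the restricted group, after which one identifies its Stone generator with $\log\tilde B$ via the spectral calculus; this route, however, requires the same domain computations to be carried out by hand.
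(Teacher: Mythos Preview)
Your argument is correct and follows exactly the same route as the paper: simultaneous diagonalization via the spectral theorem, identification of $(D(A),\langle\cdot,\cdot\rangle_A)$ with $L^2(X,(1+f^2)\mu)$, and then reading off all three claims from the multiplication model. The paper's proof is essentially a one-line sketch of this (``From this, all claims follow easily''), so your version is a faithful and more detailed expansion of the same idea.
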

\begin{proof}
Since $A$ and $B$ are strongly commuting self-adjoint operators, we can assume by the spectral theorem that $A$ is the operator of multiplication by $f$ and $B$ is the operator of multiplication by $g$ on $L_2(X,\mu)$ for some measure space $(X,\mathcal F,\mu)$ and measurable functions $f,g\colon X\to [0,\infty)$ with $g>0$ a.e. Then $D(A)=L_2(X,(1+f^2)\mu)$, $\langle\cdot,\cdot\rangle_A$ is the $L_2$-inner product on $D(A)$, and $\tilde B$ is the operator of multiplication by $g$ on $L_2(X,(1+f^2)\mu)$. From this, all claims follow easily.
\end{proof}

\begin{lemma}\label{lem:analytic_energy}
If $\E$ is a GNS-symmetric quantum Dirichlet form on $L_2(\M,\phi)$ and $a\in\AA_\E$, then $z\mapsto \E(b,U_z a)$ is entire analytic for all $b\in D(\E)$.
\end{lemma}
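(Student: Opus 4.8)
The plan is to reduce the claim to the analyticity of entire analytic vectors for a unitary group, via \Cref{lem:res_generator_group}. Write $\L^{(2)}$ for the $L_2$-generator of $\E$, so that $D(\E)=D((\L^{(2)})^{1/2})$, $\E(\xi,\eta)=\langle(\L^{(2)})^{1/2}\xi,(\L^{(2)})^{1/2}\eta\rangle$, and the form inner product is $\langle\xi,\eta\rangle_\E=\langle\xi,\eta\rangle+\E(\xi,\eta)$. Since $\E$ is GNS-symmetric, $(P_t^{(2)})$ commutes with $(\Delta_\phi^{is})$, hence $\L^{(2)}$ --- and therefore $(\L^{(2)})^{1/2}$ --- commutes strongly with $\Delta_\phi$. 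Applying \Cref{lem:res_generator_group} with $H=L_2(\M,\phi)$, $A=(\L^{(2)})^{1/2}$ and $B=\Delta_\phi$ (which is non-singular), we have $\langle\cdot,\cdot\rangle_A=\langle\cdot,\cdot\rangle_\E$ and $D(A)=D(\E)$, so $(\Delta_\phi^{it}|_{D(\E)})$ is a strongly continuous unitary group on the Hilbert space $(D(\E),\langle\cdot,\cdot\rangle_\E)$ and equals $(\tilde\Delta^{it})$ for a non-singular positive self-adjoint operator $\tilde\Delta$ on $(D(\E),\langle\cdot,\cdot\rangle_\E)$.

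Next I would show that $a$ is an entire analytic vector for $(\tilde\Delta^{it})$. In the multiplication-operator model used in the proof of \Cref{lem:res_generator_group}, $A$ is multiplication by a function $f$ and $\tilde\Delta$ is multiplication by a function $g>0$ on the $L_2$ space of a measure space equipped with the weighted measure $(1+f^2)\,d\mu$, and $a$ is represented by some function. By the definition of $\AA_\E$ we have $U_{-i\alpha}a=\Delta_\phi^{\alpha}a\in D(\E)$ for every $\alpha\in\IR$, which in this model is the statement $\int(1+f^2)g^{2\alpha}\abs{a}^2\,d\mu<\infty$ for all $\alpha\in\IR$; this is exactly the condition $a\in\bigcap_{\alpha\in\IR}D(\tilde\Delta^{\alpha})$, i.e. $a$ is entire analytic for $(\tilde\Delta^{it})$. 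Consequently $z\mapsto\tilde\Delta^{iz}a$ is entire analytic as a map into $(D(\E),\norm{\cdot}_\E)$. Since $t\mapsto\tilde\Delta^{it}a=\Delta_\phi^{it}a$ coincides on $\IR$ with $t\mapsto U_ta$, whose analytic continuation in $L_2(\M,\phi)$ is $z\mapsto U_za$, uniqueness of analytic continuation gives $U_za=\tilde\Delta^{iz}a$ for all $z\in\IC$; in particular $z\mapsto U_za$ is entire analytic with values in $(D(\E),\norm{\cdot}_\E)$.

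It remains to compose with a functional. For fixed $b\in D(\E)$, both $\xi\mapsto\langle b,\xi\rangle_\E$ and $\xi\mapsto\langle b,\xi\rangle$ are bounded linear functionals on $(D(\E),\norm{\cdot}_\E)$, so
\[
z\longmapsto\E(b,U_za)=\langle b,U_za\rangle_\E-\langle b,U_za\rangle
\]
is entire analytic, being the composition of the entire analytic map $z\mapsto U_za$ with bounded linear functionals. The only genuinely delicate point is the passage, in the second paragraph, from the definition of $\AA_\E$ (analyticity of $t\mapsto U_ta$ in the $L_2$ norm together with $U_za\in D(\E)$ for every $z$) to true entire analyticity of $z\mapsto U_za$ in the form norm; this is precisely what \Cref{lem:res_generator_group} and its multiplication-operator model are designed to deliver.
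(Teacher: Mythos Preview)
Your proof is correct and follows essentially the same route as the paper: both apply \Cref{lem:res_generator_group} with $A=(\L^{(2)})^{1/2}$ and $B=\Delta_\phi$ to obtain a self-adjoint $\tilde\Delta$ on $(D(\E),\langle\cdot,\cdot\rangle_\E)$, verify from the definition of $\AA_\E$ that $a\in\bigcap_n D(\tilde\Delta^n)$, deduce entire analyticity of $z\mapsto\tilde\Delta^{iz}a$ in the form norm, and conclude by uniqueness of analytic continuation. The only cosmetic differences are that you argue via the multiplication-operator model and real powers $\alpha$, whereas the paper cites \cite[Theorem~VI.2.2, Lemma~VI.2.3]{Tak03} and integer powers; and you spell out the final composition with bounded functionals explicitly.
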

\begin{proof}
Since $\E$ is a GNS-symmetric quantum Dirichlet form, the operators $A=\L_{(2)}^{1/2}$ and $B=\Delta_\phi$ commute strongly. Let $\tilde B$ be as in \Cref{lem:res_generator_group}. By \cite[Theorem VI.2.2]{Tak03}, if $a\in \AA_\E$, then $a\in D(\Delta_\phi^n)$ and $\Delta_\phi^n a=U_{-in}a\in D(\E)$ for all $n\in\IZ$. Thus $a\in\bigcap_{n\in\IZ}D(\tilde B^n)$. By \cite[Lemma VI.2.3]{Tak03}, the map $t\mapsto \tilde B^{it}a$ has an entire analytic continuation with values in $(D(\E),\langle\cdot,\cdot\rangle_\E)$. Since $\tilde B^{it}a=\Delta_\phi^{it}a$ for $t\in\IR$ by \Cref{lem:res_generator_group}, the claim follows from the uniqueness of analytic continuations.
\end{proof}

\subsection{The general case}

In this subsection we prove the existence and uniqueness of the derivation associated with a GNS-symmetric quantum Markov semigroup in the general case. Before we do so, we need the following strengthening of \Cref{prop:char_Dir_form_conservative}.

\begin{lemma}\label{lem:approx_conservative_A_E}
Let $(P_t)$ be a GNS-symmetric quantum Markov semigroup and $\E$ the associated quantum Dirichlet form. There exists a sequence $(a_n)$ in $\AA_\E$ such that $a_n^\flat=a_n$, $\norm{\pi_r(a_n)}\leq 1$, $\pi_r(a_n)\to 1$ strongly and
\begin{equation*}
\E(a_n,b^\sharp c)\to 0
\end{equation*}
for all $b,c\in\AA_\E$.
\end{lemma}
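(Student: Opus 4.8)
The plan is to rerun the ``only if'' direction of \Cref{prop:char_Dir_form_conservative}, but with a more careful choice of the approximating elements. Let $(P_t)$ be the quantum Markov semigroup on $\M$ associated with $\E$, let $(P_t^{(2)})$ be the induced semigroup on $L_2(\M,\phi)$, and write $\L$, $\L^{(2)}$ for their generators. Since $\AA_\E$ is invariant under $^\sharp$, the set $\{x\in\n_\phi\cap\n_\phi^\ast\mid\Lambda_\phi(x)\in\AA_\E\}$ is a $\ast$-invariant subspace, so the last assertion of \Cref{thm:Tomita_algebra_QMS} allows me to choose self-adjoint $x_n\in\n_\phi\cap\n_\phi^\ast$ with $\norm{x_n}\le 1$, $\Lambda_\phi(x_n)\in\AA_\E$ and $x_n\to 1$ strongly$^\ast$. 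I then set $a_n=(\L^{(2)}+1)^{-1}\Lambda_\phi'(x_n)$. Since $(\L^{(2)}+1)^{-1}=\int_0^\infty e^{-t}P_t^{(2)}\,dt$ commutes with $J_\phi$ (the $P_t$ being self-adjoint maps) and corresponds to $(\L+1)^{-1}=\int_0^\infty e^{-t}P_t\,dt$ on $\M$ via $\Lambda_\phi$, we have $a_n=\Lambda_\phi'((\L+1)^{-1}x_n)$.

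With this choice, three of the four asserted properties follow as in \Cref{prop:char_Dir_form_conservative}. Indeed, for $b,c\in\AA_\E\subset\Lambda_\phi(\n_\phi\cap\n_\phi^\ast)$, writing $b=\Lambda_\phi(y)$ and $c=\Lambda_\phi(z)$, we have $b^\sharp c=\Lambda_\phi(y^\ast z)\in D(\E)\cap\Lambda_\phi(\m_\phi)$, and $(a_n)$ is exactly the sequence produced in the proof of \Cref{prop:char_Dir_form_conservative}, so $\E(a_n,b^\sharp c)\to 0$. Moreover $\pi_r(a_n)=J_\phi(\L+1)^{-1}(x_n)J_\phi$, whence $\norm{\pi_r(a_n)}=\norm{(\L+1)^{-1}(x_n)}\le 1$; and the Kadison--Schwarz estimate from that same proof (using that $(\L+1)^{-1}$ is normal and unital and that $x_n\to 1$ strongly$^\ast$) gives $(\L+1)^{-1}(x_n)\to 1$ strongly, hence $\pi_r(a_n)\to 1$ strongly.

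The genuinely new points — also the ones requiring care — are $a_n\in\AA_\E$ and $a_n^\flat=a_n$. Because $x_n=x_n^\ast$, we have $\Lambda_\phi'(x_n)=J_\phi\Lambda_\phi(x_n)\in\AA_\E$ by the $J$-invariance of $\AA_\E$, and, using that the closure of $^\flat$ is $\Delta_\phi^{1/2}J_\phi$, also $\Lambda_\phi'(x_n)^\flat=\Lambda_\phi'(x_n^\ast)=\Lambda_\phi'(x_n)$. Now $(\L^{(2)}+1)^{-1}$ commutes with $(\Delta_\phi^{it})$ (since $(P_t)$ commutes with the modular group) and with $J_\phi$, and it maps $\Lambda_\phi(\n_\phi\cap\n_\phi^\ast)$ into itself, because $(\L+1)^{-1}$ preserves $\n_\phi\cap\n_\phi^\ast$ by the Kadison--Schwarz inequality together with $\phi\circ(\L+1)^{-1}\le\phi$. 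Hence it leaves the Tomita algebra $\AA_\phi$ invariant, and, commuting with $(\Delta_\phi^{it})$ and mapping $D(\E)=D((\L^{(2)})^{1/2})$ into itself, it leaves $\AA_\E$ invariant as well, so $a_n\in\AA_\E$. Finally, commuting with $(\Delta_\phi^{it})$ and $J_\phi$, the resolvent $(\L^{(2)}+1)^{-1}$ commutes with $\Delta_\phi^{1/2}J_\phi$, so
\[
a_n^\flat=\Delta_\phi^{1/2}J_\phi\,a_n=(\L^{(2)}+1)^{-1}\big(\Delta_\phi^{1/2}J_\phi\,\Lambda_\phi'(x_n)\big)=(\L^{(2)}+1)^{-1}\Lambda_\phi'(x_n)=a_n.
\]
Apart from this bookkeeping — checking that the resolvent respects not merely the $L_2$-structure but also the Tomita-algebra structure of $\AA_\E$ and the involution $^\flat$ — the argument is a direct adaptation of \Cref{prop:char_Dir_form_conservative}; the one new idea, and the main obstacle, is to take the approximants $x_n$ already inside $\{x\mid\Lambda_\phi(x)\in\AA_\E\}$, which \Cref{thm:Tomita_algebra_QMS} makes possible.
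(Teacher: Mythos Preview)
Your proof is correct and follows exactly the paper's approach: pick self-adjoint $x_n$ with $\Lambda_\phi(x_n)\in\AA_\E$ via \Cref{thm:Tomita_algebra_QMS}, set $a_n=\Lambda_\phi'((\L+1)^{-1}x_n)$, and rerun the computation from \Cref{prop:char_Dir_form_conservative}. The paper's proof is terse and leaves implicit the verifications that $a_n\in\AA_\E$ and $a_n^\flat=a_n$, which you have spelled out carefully and correctly (the key points being that $(\L^{(2)}+1)^{-1}$ commutes with $J_\phi$ and $\Delta_\phi^{it}$, preserves $\Lambda_\phi(\n_\phi\cap\n_\phi^\ast)$, and maps into $D(\E)$).
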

\begin{proof}
By \Cref{thm:Tomita_algebra_QMS} there exists a sequence $(x_n)$ of self-adjoint elements from $\n_\phi$ such that $\Lambda_\phi^\prime(x_n)\in \AA_\E$, $\norm{x_n}\leq 1$ and $x_n\to 1$ strongly$^\ast$. Since $b^\sharp c\in \AA_\E\cap \Lambda_\phi(\m_\phi)$ by \Cref{thm:Tomita_algebra_QMS}, the rest of the proof proceeds exactly as in \Cref{prop:char_Dir_form_conservative}.
\end{proof}

\begin{theorem}\label{thm:from_QMS_to_deriv}
Let $\M$ be a von Neumann algebra and $\phi$ a normal semi-finite faithful weight on $\M$. If $\E$ is a $(U_t)$-invariant quantum Dirichlet form on $L_2(\M,\phi)$, then there exists a Tomita bimodule $\H$ over $\AA_\E$ and a symmetric derivation $\delta\colon \AA_\E\to \H$ such that
\begin{equation*}
\langle \delta(a),\delta(b)\rangle_\H=\E(a,b)
\end{equation*}
for all $a,b\in \AA_\E$. If $\phi$ is finite, then $\delta$ can be chosen nearly inner.
\end{theorem}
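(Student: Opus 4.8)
The plan is to obtain $\H$ and $\delta$ as a limit of the inner derivations attached by \Cref{prop:deriv_bdd} to the bounded approximations of the generator, and to transport all structural identities to the limit. For each $t>0$ the map $P_t$ is normal, unital, completely positive and GNS-symmetric, so \Cref{prop:deriv_bdd} produces a normal Tomita bimodule $\H_t$ over $\AA_\phi$ and a symmetric derivation $\delta_t\colon\AA_\phi\to\H_t$ with
\[
\norm{\delta_t(a)}_{\H_t}^2=\tfrac1t\langle a-P_t^{(2)}(a),a\rangle=:\E_t(a),\qquad a\in\AA_\phi,
\]
and with $\delta_t$ inner whenever $\phi$ is finite. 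By the spectral theorem for $\L^{(2)}$ one has $\E_t(\xi)\nearrow\E(\xi)$ in $[0,\infty]$ as $t\searrow0$ for every $\xi\in L_2(\M,\phi)$, hence $\E_t(\xi,\eta)\to\E(\xi,\eta)$ for all $\xi,\eta\in D(\E)$; moreover each $\E_t$ is $(U_t)$-invariant because $P_t^{(2)}$ commutes with $(\Delta_\phi^{is})$.

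Next I build $\H$. Since $\AA_\E$ is a Tomita algebra (\Cref{thm:Tomita_algebra_QMS}), all products below stay in $\AA_\E\subseteq D(\E)$. Put $V=\AA_\E\oplus(\AA_\E\odot\AA_\E)$, with generators written $\delta(a)$ and $\delta(a)\otimes b$. For each $t$ the linear map $V\to\H_t$ sending $\delta(a)\mapsto\delta_t(a)$ and $\delta(a)\otimes b\mapsto\delta_t(a)b$ pulls back a positive semidefinite form $\langle\cdot,\cdot\rangle_t$ on $V$, which on generators is (using \Cref{lem:inner_prod_Tomita_bimodule}, $R_t(d)^\ast=R_t(d^\flat)$ and $(d^\flat)^\flat=d$)
\begin{align*}
\langle\delta(a),\delta(c)\rangle_t&=\E_t(a,c),\\
\langle\delta(a),\delta(c)\otimes d\rangle_t&=\tfrac12\bigl(\E_t(a,cd)+\E_t(ad^\flat,c)-\E_t(d^\flat,a^\sharp c)\bigr),\\
\langle\delta(a)\otimes b,\delta(c)\otimes d\rangle_t&=\tfrac12\bigl(\E_t(a,cdb^\flat)+\E_t(abd^\flat,c)-\E_t(bd^\flat,a^\sharp c)\bigr).
\end{align*}
By the previous paragraph $\langle\cdot,\cdot\rangle_t\to\langle\cdot,\cdot\rangle$ pointwise on $V\times V$, so $\langle\cdot,\cdot\rangle$ is positive semidefinite; let $\H$ be the associated separated pre-Hilbert space, keeping the names $\delta(a),\delta(a)b$ for the images of the generators. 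Then $\langle\delta(a),\delta(b)\rangle_\H=\E(a,b)$ by construction.

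It remains to put the Tomita-bimodule structure on $\H$. Define $L(c)\delta(a)=\delta(ca)-\delta(c)a$ (extended by the product rule), $R(b)\delta(a)=\delta(a)b$, $\J\delta(a)=\delta(Ja)$ and $\U_z\delta(a)=\delta(U_za)$, each acting on the right variable through the corresponding product or image. On each $\H_t$ these are literally the left action, right action, conjugation and modular group of the Tomita bimodule $\H_t$ (here $\delta_t$ symmetric gives $\J\delta_t(a)=\delta_t(Ja)$ and $\U_z\delta_t(a)=\delta_t(U_za)$), so every algebraic relation required in \Cref{def:Tomita_bimodule} holds for $\langle\cdot,\cdot\rangle_t$ and, passing to the limit, for $\langle\cdot,\cdot\rangle$; in particular each operator respects the null space (e.g.\ $\langle L(c)\xi,L(c)\xi\rangle\le\norm{\pi_l(c)}^2\langle\xi,\xi\rangle$) and descends to $\H$. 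Isometry of $\U_t$ uses the $(U_t)$-invariance of $\E$; strong continuity of $(\U_t)$ on $\overline\H$ follows by checking that $t\mapsto\langle\U_t v,v\rangle_t$ is continuous and converges to $t\mapsto\langle\U_t v,v\rangle$ uniformly on compacts (again exploiting that $\E$ and $\E_t$ are $(U_t)$-invariant, together with \Cref{lem:analytic_energy}), and analyticity of $z\mapsto\langle\xi,\U_z\eta\rangle$ on $\H$ comes from \Cref{lem:analytic_energy} and uniqueness of analytic continuation; non-degeneracy of $L$ and $R$ is the density of $\AA_\E^2$ in $\AA_\E$ from \Cref{thm:Tomita_algebra_QMS}. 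This makes $\H$ a Tomita bimodule over $\AA_\E$ and $\delta\colon\AA_\E\to\H$ a symmetric derivation with $\langle\delta(a),\delta(b)\rangle_\H=\E(a,b)$. If $\phi$ is finite, choosing $t_n\searrow0$ the derivations $\delta_{t_n}|_{\AA_\E}$ are inner and symmetric, $\norm{\bar\delta_{t_n}(\xi)}^2=\E_{t_n}(\xi)\nearrow\E(\xi)$, and $\norm{\bar\delta(\xi)}^2=\E(\xi)$ on $D(\bar\delta)=D(\E)$ because $\AA_\E$ is a core for $\E$; closability of $\delta$ is automatic (its form closure is $\E$), so $\delta$ is nearly inner.

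The main obstacle is precisely this last step: as noted before the theorem, the form of \Cref{lem:inner_prod_Tomita_bimodule} is generically degenerate, so the real content is that $L$, $R$, $\J$ and $(\U_z)$ genuinely descend to well-defined operators on $\H$ with all the properties of \Cref{def:Tomita_bimodule}. The device that reduces this to bookkeeping is that everything in sight is a pointwise (and, where needed, locally uniform) limit of the honest operations on the auxiliary bimodules $\H_t$, the only input not immediately inherited from the $\H_t$ — the analytic and continuity statements for $(\U_z)$ — being supplied by \Cref{lem:analytic_energy}. A minor but recurring nuisance is the non-unitality of $\AA_\phi$ (hence of $\AA_\E$), which is why $\delta(a)$ has to be carried as a separate generator of $V$ rather than recovered from the symbols $\delta(a)b$.
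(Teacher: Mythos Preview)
Your approach is essentially the paper's: both build the bimodule by pushing the structure of the approximating $\H_t$'s from \Cref{prop:deriv_bdd} through the limit $\E_t\to\E$, using \Cref{lem:inner_prod_Tomita_bimodule} for the inner product and \Cref{lem:analytic_energy} for the analytic statements. The cosmetic difference is that you carry $\delta(a)$ as a separate generator of $V$, so that $\langle\delta(a),\delta(b)\rangle_\H=\E(a,b)$ holds by fiat, whereas the paper first completes only $\AA_\E\odot\AA_\E$ to a Hilbert space $\K$, realizes $\delta(a)\in\K$ as a Riesz vector (its Step~3), and then proves $\norm{\delta(a)}_\K^2=\E(a)$ separately (Step~6).

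This reorganization does not, however, let you bypass the one place where conservativeness is actually used, and your non-degeneracy argument is a genuine gap. Density of $\AA_\E^2$ in $\AA_\E$ is only in $L_2$ and does not show that $\operatorname{lin}\{\delta(c)d:c,d\in\AA_\E\}=R(\AA_\E)\H$ is dense in your $\overline\H$: what is needed is that each $\delta(a)$ lies in the closure of this span. If you project $\delta(a)$ orthogonally onto $\overline{\operatorname{lin}\{\delta(c)d\}}$ you land exactly on the paper's Riesz vector, whose norm is a priori only $\le\E(a)^{1/2}$; equality (hence non-degeneracy of $R$, and then of $L$ via $\J$) is precisely what the paper's Step~6 establishes using \Cref{lem:approx_conservative_A_E}. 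Concretely, one takes the sequence $(c_n)\subset\AA_\E$ from that lemma with $c_n^\flat=c_n$, $\pi_r(c_n)\to1$ strongly and $\E(c_n,b^\sharp c)\to0$, and computes from your own inner-product formulas that $\langle\delta(a),\delta(a)c_n\rangle\to\E(a)$, whence $\norm{\delta(a)-\delta(a)c_n}^2\to0$ (first for $a\in\AA_\E\cap D(\L^{(2)})$, then in general). You cannot replace this by a limit-exchange argument through the $\H_t$'s: normality of each $\H_t$ gives $\norm{\delta_t(a)-\delta_t(a)c_n}\to0$ for fixed $t$, but the double limit does not commute. So the Markov hypothesis enters your proof at exactly the same point and via the same mechanism as in the paper; your sentence ``non-degeneracy of $L$ and $R$ is the density of $\AA_\E^2$ in $\AA_\E$'' needs to be replaced by this argument.

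A smaller point: your route to strong continuity of $(\U_t)$ via uniform-on-compacts convergence of $t\mapsto\langle\U_t v,v\rangle_s$ is not obviously justified. It is cleaner (and this is what the paper does) to argue that the inner product $\langle\xi,\U_t\eta\rangle$ is a finite combination of terms $\E(\,\cdot\,,\,\cdot\,)$ whose arguments depend on $t$ through $U_t$, and then use that $(U_t)$ is strongly continuous on $(D(\E),\norm{\cdot}_\E)$ (from \Cref{lem:analytic_energy}) together with the multiplication bound of \Cref{thm:Tomita_algebra_QMS}; since $(\U_t)$ is a group of isometries, weak continuity upgrades to strong.
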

\begin{proof}
\emph{Step 1:} Construction of the auxiliary Hilbert space $\K$:

On $\AA_\E\odot \AA_\E$ define a sesquilinear form by
\begin{equation*}
\langle a\otimes b,c\otimes d\rangle_\K=\frac 1 2(\E(a,cdb^\flat)+\E(abd^\flat,c)-\E(bd^\flat,a^\sharp c)).
\end{equation*}
For $t\geq 0$ let $\E_t(a)=\frac 1 t \langle a,a-P_t^{(2)}(a)\rangle$. It follows from \Cref{prop:deriv_bdd} that there exists a normal Tomita bimodule $\K_t$ and a symmetric derivation $\delta_t\colon\AA_\E\to \K$ such that $\E_t(a)=\norm{\delta_t(a)}^2$. Since $\E_t(a,b)\to \E(a,b)$ as $t\searrow 0$ for all $a,b\in\AA_\E$, we deduce from \Cref{lem:inner_prod_Tomita_bimodule} that
\begin{align*}
\langle \delta_t(a)b,\delta_t(c)d\rangle&=\frac 1 2(\E_t(a,cdb^\flat)+\E_t(abd^\flat,c)-\E_t(bd^\flat,a^\sharp c))\\
&\to \frac 1 2(\E(a,cdb^\flat)+\E(abd^\flat,c)-\E(bd^\flat,a^\sharp c))\\
&=\langle a\otimes b,c\otimes d\rangle_\K
\end{align*}
as $t\searrow 0$ for all $a,b,c,d,\in \AA_\E$. In particular,
\begin{equation*}
\left\langle\sum_j a_j\otimes b_j,\sum_k a_k\otimes b_k\right\rangle_\K=\lim_{t\to 0}\left\lVert\sum_j \delta_t(a_j)b_j\right\rVert_{\K_t}^2\geq 0
\end{equation*}
for all $a_j,b_j\in\AA_\E$.

Thus $\langle\cdot,\cdot\rangle_\K$ is a semi inner product on $\AA_\E\odot\AA_\E$. Let $\K$ denote the Hilbert space obtained from $\AA_\E\odot\AA_\E$ after separation and completion with respect to this semi inner product and write $a\otimes_\E b$ for the image of $a\otimes b$ in $\K$.

\emph{Step 2:} For all $a\in\AA_\E$ the map $b\otimes_\E c\mapsto ab\otimes_\E c-a\otimes_\E bc$ extends to a bounded linear operator $L(a)$ on $\K$ with $\norm{L(a)}\leq \norm{\pi_l(a)}$:

For $b_j,c_j\in\AA_\E$ we have
\begin{align*}
\left\lVert \sum_j ab_j\otimes_\E c_j-a\otimes_\E b_j c_j\right\rVert_\K^2&=\lim_{t\searrow 0}\left\lVert \sum_j \delta_t(ab_j)c_j-\delta_t(a)b_j c_j\right\rVert^2\\
&=\lim_{t\searrow 0}\left\lVert\sum_j a\delta_t(b_j)c_j\right\rVert^2\\
&\leq \norm{\pi_l(a)}^2\lim_{t\searrow 0}\left\lVert\sum_j \delta_t(b_j)c_j\right\rVert^2\\
&=\norm{\pi_l(a)}^2\left\lVert \sum_j a_j\otimes_\E b_j\right\rVert_\K^2.
\end{align*}
Similarly one proves that for all $a\in\AA_\E$ the map $b\otimes_\E c\mapsto b\otimes_\E ca$ extends to a bounded linear operator $R(a)$ on $\K$ with $\norm{R(a)}\leq\norm{\pi_r(a)}$. It is then not hard to check that $L(a)$ and $R(a)$ are adjointable operators and $L$, $R$ are commuting $\ast$-representations of $(\AA,^\sharp)$ and $(\AA^\op,^\flat)$, respectively.

\emph{Step 3:} For every $a\in\AA_\E$ the linear map $\psi_a\colon \K\to\IC$ determined by 
\begin{equation*}
\psi_a(b\otimes_\E c)=\frac 1 2 \E(a,bc)+\frac 1 2\E(ac^\flat,b)-\frac 1 2\E(c^\flat,a^\sharp b)
\end{equation*}
is bounded with $\norm{\psi_a}\leq \E(a)^{1/2}$:

Again one can argue by approximation and get
\begin{equation*}
\left\lvert \psi_a\left(\sum_j b_j\otimes_\E c_j\right)\right\rvert=\lim_{t\to 0}\left\lvert\left\langle \delta_t(a),\sum_j \delta_t(b_j)c_j\right\rangle\right\rvert\leq \E(a)^{1/2}\left\lVert \sum_j b_j\otimes_\E c_j\right\rVert_\K.
\end{equation*}
Let $\delta(a)$ denote the unique vector in $\K$ such that $\psi_a=\langle \delta(a),\cdot\,\rangle$.

\emph{Step 4:} For all $a,b\in \AA_\E$ we have $R(b)\delta(a)=a\otimes_\E b$ and $R$ is a non-degenerate representation:

For $c,d\in\AA_\E$ we have
\begin{align*}
\langle R(b)\delta(a),c\otimes_\E d\rangle&=\langle \delta(a),c\otimes db^\flat\rangle\\
&=\frac 1 2(\E(a,cdb^\flat)+\E(abd^\flat,c)-\E(bd^\flat,a^\sharp b))\\
&=\langle a\otimes_\E b,c\otimes_\E d\rangle.
\end{align*}
Thus $R(b)\delta(a)=a\otimes_\E b$. Since the linear span of elements of the form $a\otimes_\E b$ with $a,b\in\AA_\E$ is dense in $\K$ by definition, $R$ is non-generate.

\emph{Step 5:} For all $a,b\in\AA_\E$ we have $\delta(ab)=L(a)\delta(b)+R(b)\delta(a)$:

By Step 4, if $c\in\AA_\E$, then
\begin{align*}
R(c)\delta(ab)&=ab\otimes_\E c\\
&=L(a)(b\otimes_\E c)+R(c)(a\otimes_\E b)\\
&=L(a)R(c)\delta(b)+R(c)R(b)\delta(a)\\
&=R(c)(L(a)\delta(b)+R(b)\delta(a)).
\end{align*}
Since $R$ is non-degenerate again by Step 4, we conclude $\delta(ab)=L(a)\delta(b)+R(b)\delta(a)$.

\emph{Step 6:} For all $a\in\AA_\E$ we have $\norm{\delta(a)}_\K^2=\E(a)$:

We first prove the identity for $a\in\AA_\E\cap D(\L^{(2)})$. By \Cref{lem:approx_conservative_A_E} there exists a sequence $(b_n)$  in $\AA_\E$ such that $b_n=b_n^\flat$, $\norm{\pi_r(b_n)}\leq 1$, $\pi_r(b_n)\to 1$ strongly, and $\E(b_n,c^\sharp d)\to 0$ for all $c,d\in\AA_\E$. Moreover,
\begin{equation*}
\E(a,ab_n)=\langle \L^{(2)}(a),ab_n\rangle=\langle \L^{(2)}(a),\pi_r(b_n)a\rangle\to \E(a).
\end{equation*}
Combining this with Step 4, we get
\begin{align*}
\norm{\delta(a)}_\K^2&\geq  \langle \delta(a),\delta(a)b_n\rangle\\
&=\psi_a(a\otimes_\E b_n)\\
&=\frac 1 2\lvert\E(a,ab_n)+ \E(ab_n,a)-\E(b_n,a^\sharp a)\rvert\\
&\to \E(a).
\end{align*}
The converse inequality was already shown in Step 3.

For arbitrary $a\in\AA_\E$ we have $P_t a\in \AA_\E\cap D(\L^{(2)})$ and $\E(P_t a-a)\to 0$. By Step 3,
\begin{equation*}
\norm{\delta(a-P_t a)}_\K\leq \E(a-P_t a)^{1/2}\to 0.
\end{equation*}
This establishes $\norm{\delta(a)}_\K=\E(a)^{1/2}$ in the general case.

\emph{Step 7:} For all $a,b,c\in \AA_\E$ we have $\langle L(a)\delta(b),\delta(c)\rangle=\langle \delta(Jc),R(Ja)\delta(Jb)\rangle$:

By Step 5 we have $L(a)\delta(b)=\delta(ab)-R(b)\delta(a)$. For the first summand we can use Step 6 to get $\langle \delta(ab),\delta(c)\rangle=\E(ab,c)$. For the second summand, the result from Step 4 and the definition of $\delta$ give
\begin{equation*}
\langle R(b)\delta(a),\delta(c)\rangle=\frac 12(\E(a,cb^\flat)+\E(ab,c)-\E(b,a^\sharp c)).
\end{equation*}
Thus
\begin{align*}
\langle L(a)\delta(b),\delta(c)\rangle&=\E(ab,c)-\frac 1 2(\E(a,cb^\flat)+\E(ab,c)-\E(b,a^\sharp c))\\
&=\frac 1 2\E(J c,Jb\cdot Ja)+\frac 1 2\E(Jc(Ja)^\flat,Jb)-\frac 1 2\E(U_{-i/2}a,U_{i/2}(c b^\flat))\\
&=\frac 1 2\E(J c,Jb\cdot Ja)+\frac 1 2\E(Jc(Ja)^\flat,Jb)-\frac 1 2 \E((Ja)^\flat,(Jc)^\sharp Jb)\\
&=\langle \delta(Jc),R(Ja)\delta(Jb)\rangle,
\end{align*}
where we used $\E\circ J=\E$ and the invariance of $\E$ under $(U_t)$ in the second line.

It follows that $L(a)\delta(b)\mapsto R(Ja)\delta(Jb)$ is well-defined and extends to an isometric anti-linear operator $\J$ on $\K$. It is easy to see that $\J$ is an anti-unitary involution and $\J L(a)=R(Ja)\J$ for all $a\in\AA_\E$.

\emph{Step 8:} The $\ast$-representation $L$ of $\AA_\E$ on $\K$ is non-degenerate:

By the previous step, $L(a)=\J R(Ja) \J$ for all $a\in\AA_\E$. Since $R$ is non-degenerate by Step 4, it follows that $L$ is non-degenerate, too.

\emph{Step 9:} For all $t\in \IR$ the map $a\otimes_\E b\mapsto U_t a\otimes_\E U_t b$ extends to a unitary operator $\U_t$ on $\K$ such that $\U_t(L(a)R(b)\xi)=L(U_t a)R(U_t b)\U_t \xi$ for all $a,b\in \AA_\E$, $\xi\in \K$ and $\U_t\J=\J\U_t$. Moreover, the family $(\U_t)$ is a strongly continuous unitary group:

For $a,b,c,d\in\AA_\E$ we have
\begin{align*}
&\quad\;\langle U_t a\otimes_\E U_t b,U_t c\otimes_\E U_t d\rangle\\
&=\frac 1 2(\E(U_t a,U_t(cdb^\flat))+\E(U_t(abd^\flat),U_t c)-\E(U_t(bd^\flat),U_t(a^\flat c)))\\
&=\frac 1 2((\E(a,cdb^\flat)+\E(abd^\flat,c)-\E(bd^\flat,a^\flat c))\\
&=\langle a\otimes_\E b,c\otimes_\E d\rangle,
\end{align*}
where we used that $U_t$ is an algebra homomorphism on $\AA_\E$ that commutes with $J$ and $\E$ is invariant under $U_t$. From this it is easy to see that $(\U_t)$ is a unitary group.

To show that $(\U_t)$ is strongly continuous, it suffices to show that it is weakly continuous by standard semigroup theory. Moreover, as $(U_t)$ is uniformly bounded, it is enough to check that $t\mapsto \langle a\otimes_\E b,\U_t(c\otimes_\E d)\rangle$ is continuous for all $a,b,c,d\in\AA_\E$. This follows from the fact that $(U_t)$ is a strongly continuous unitary group on $(D(\E),\langle\cdot,\cdot\rangle_\E)$ by \Cref{lem:analytic_energy} and multiplication by elements from $\AA_\E$ is continuous with respect to the form norm $\norm\cdot_\E$ by \Cref{thm:Tomita_algebra_QMS}.

The remaining properties of $\U_t$ follow by direct computation.

\emph{Step 10:} $\U_t(\delta(a))=\delta(U_t a)$ for all $a\in\AA_\E$, $t\in\IR$:

For $b,c\in\AA_\E$ we have
\begin{align*}
\langle \U_t(\delta(a)),b\otimes_\E c\rangle&=\langle \delta(a),U_{-t} a\otimes_\E U_{-t}b\rangle\\
&=\frac 1 2\E(a,U_{-t}(bc))+\frac 1 2\E(a(U_{-t} c)^\flat,U_{-t}b)-\frac 1 2\E((U_{-t}c)^\flat,a^\sharp U_{-t}b)\\
&=\frac 1 2\E(U_t a,bc)+\frac 1 2\E((U_t a)c^\sharp,b)-\frac 1 2\E(c^\flat,(U_t a)^\sharp b)\\
&=\langle \delta(U_t a),b\otimes_\E c\rangle,
\end{align*}
where we used that $\E$ is invariant under $U_t$ in the third line. Since the linear span of elements of the form $b\otimes_\E c$ with $b,c\in\AA_\E$ is dense in $\K$, the claim follows.

\emph{Step 11:} For all $a\in \AA_\E$ the map $z\mapsto \delta(U_z a)$ is entire analytic:

By elementary properties of Banach-valued analytic functions, it suffices to show that $z\mapsto \langle \xi,\delta(U_z a)\rangle$ is analytic for all $\xi$ in the closure of $\delta(\AA_\E)$. By \Cref{lem:analytic_energy} the map $z\mapsto U_z a$ is entire analytic with respect to the form norm $\norm{\cdot}_\E$. In particular, since $\norm{\delta(U_z a)}=\E(U_z a)^{1/2}$ by Step 6, the map $z\mapsto \delta(U_z a)$ is locally bounded. Hence it is enough to show that $z\mapsto \langle\xi,\delta(U_z a)\rangle$ is analytic for all $\xi\in \delta(\AA_\E)$, which follows from \Cref{lem:analytic_energy} combined with Step 6.

\emph{Conclusion:} Let $\H$ be the space of all elements $\xi\in \K$ for which $t\mapsto \U_t \xi$ has an entire analytic continuation. By Steps 10 and 11, $\delta(\AA_\E)$ is contained in $\H$. Moreover, it follows easily from Step 9 that $L(a)$ and $R(a)$ leave $\H$ invariant for all $a\in\AA_\E$, and the same holds for $\J$. By the properties established in the previous steps, this makes $\K$ into a Tomita bimodule over $\AA_\E$ and $\delta$ into a symmetric derivation such that
\begin{equation*}
\E(a,b)=\langle\delta(a),\delta(b)\rangle
\end{equation*}
for all $a,b\in\AA_\E$.

If $\phi$ is finite, then the derivations $\delta_t$ from Step 1 can be chosen inner by \Cref{prop:deriv_bdd}, so that $\delta$ is nearly inner by construction.
\end{proof}

\subsection{Uniqueness}

In general, the Tomita bimodule and derivation in \Cref{thm:from_QMS_to_deriv} are not unique because one can always artificially enlarge $\H$. To get uniqueness, one needs to assume that the Tomita bimodule is generated by the derivation in the following sense. Let $\AA$ be a Tomita algebra and $\HH$ a Tomita bimodule over $\AA$. Write $\tilde \AA$ for the unitization of $\AA$, and extend the maps $L$ and $R$ to $\tilde\AA$ in the obvious way. If $\delta\colon\AA\to \H$ is a symmetric derivation, say that $\H$ is generated by $\delta$ if
\begin{equation*}
\operatorname{lin}\{L(a)R(b)\delta(c)\mid a,b\in\tilde \AA,\,c\in\AA\}=\H.
\end{equation*}
By the product rule, this is equivalent to $\operatorname{lin}\{L(a)\delta(b)\mid a\in\tilde \AA,\,b\in\AA\}=\H$ or $\operatorname{lin}\{R(a)\delta(b)\mid a\in\tilde \AA,\,b\in\AA\}=\H$.

\begin{theorem}\label{thm:deriv_unique}
Let $\E$ be a GNS-symmetric quantum Dirichlet form. If for $j\in\{1,2\}$, $\H_j$ is a Tomita bimodule over $\AA_\E$ and $\delta_j\colon\AA_\E\to\H_j$ is a symmetric dervation such that $\H_j$ is generated by $\delta_j$ and $\E(a)=\norm{\delta_j(a)}^2$ for all $a\in\AA_\E$, then there exists a unique bijective isometric bimodule map $V\colon \H_1\to\H_2$ that intertwines the conjugation operators $\J_1$, $\J_2$ and the groups $(\U_z^{(1)})$, $(\U_z^{(2)})$ and satisfies $V\circ\delta_1=\delta_2$.
\end{theorem}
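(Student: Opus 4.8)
The plan is to pin $V$ down by its only possible values on generators and then check that those values are consistent. Uniqueness is automatic: if $V\colon\H_1\to\H_2$ is a bimodule map with $V\circ\delta_1=\delta_2$, then $V\bigl(L_1(a)R_1(b)\delta_1(c)\bigr)=L_2(a)R_2(b)\delta_2(c)$ for all $a,b\in\tilde\AA_\E$ and $c\in\AA_\E$, and such vectors span $\H_1$ since $\H_1$ is generated by $\delta_1$. So the real task is existence, and I would \emph{define} $V$ on that spanning set by $V\bigl(L_1(a)R_1(b)\delta_1(c)\bigr)=L_2(a)R_2(b)\delta_2(c)$.

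Well-definedness and isometry both reduce to the single assertion that $\langle L_j(a)R_j(b)\delta_j(c),L_j(a')R_j(b')\delta_j(c')\rangle_{\H_j}$ depends only on $\E$ and not on $j\in\{1,2\}$. Expanding $L_j(a)R_j(b)\delta_j(c)$ by the product rule (and using that $L_j$ and $R_j$ commute) rewrites it as a linear combination of vectors $\delta_j(e)f$ and $\delta_j(e)$ with $e,f\in\AA_\E$, so the assertion follows from three facts: $\langle\delta_j(a),\delta_j(c)\rangle=\E(a,c)$ by hypothesis and polarization; $\langle\delta_j(a)b,\delta_j(c)d\rangle=\frac12\bigl(\E(a,cdb^\flat)+\E(abd^\flat,c)-\E(bd^\flat,a^\sharp c)\bigr)$, which is \Cref{lem:inner_prod_Tomita_bimodule}; and the ``boundary'' identity
\[
\langle\delta_j(a),\delta_j(c)d\rangle=\tfrac12\bigl(\E(a,cd)+\E(ad^\flat,c)-\E(d^\flat,a^\sharp c)\bigr).
\]
The boundary identity holds in any Tomita bimodule carrying a symmetric derivation $\delta$ with $\norm{\delta(\cdot)}^2=\E(\cdot)$, by the same computation as \Cref{lem:inner_prod_Tomita_bimodule}: expanding the three energies with the product rule, the terms containing $L_j(a)$ cancel because $L_j$ is a $^\sharp$-representation, and the remaining cross term is removed by the identity $\langle\delta(w),\delta(a)^\sharp c\rangle=\langle\delta(a),c\,\delta(w^\flat)\rangle$, which is exactly what the chain of equalities in the proof of \Cref{lem:inner_prod_Tomita_bimodule} establishes for an arbitrary element $w\in\AA_\E$. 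Granting this, $V$ is a well-defined isometry of $\H_1$ onto $\H_2$ (surjectivity because $\H_2$ is generated by $\delta_2$), hence a bijection.

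The structural properties then fall out of the defining formula by evaluating on the spanning vectors, each computation staying inside the spanning set because of the product rule. Taking $a=b=1$ gives $V\circ\delta_1=\delta_2$. Since $L_j$ and $R_j$ are multiplicative in the appropriate sense, $V L_1(x)=L_2(x)V$ and $V R_1(y)=R_2(y)V$ are immediate. Using $\J_jL_j(a)=R_j(Ja)\J_j$ together with the derived relation $\J_jR_j(b)=L_j(Jb)\J_j$ and $\J_j\circ\delta_j=\delta_j\circ J$ gives $V\J_1=\J_2V$; using axiom (e) of \Cref{def:Tomita_bimodule} for $L_j$ and its counterpart for $R_j$ together with $\U_z^{(j)}\delta_j(c)=\delta_j(U_zc)$ gives $V\U_z^{(1)}=\U_z^{(2)}V$ for every $z\in\IC$. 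This produces the asserted $V$.

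I expect the boundary identity to be the only genuine point of the argument: it says that the inner product of an abstract Tomita bimodule generated by its symmetric derivation is determined by $\E$ not only on the two-sided vectors $\delta(a)b$ already covered by \Cref{lem:inner_prod_Tomita_bimodule}, but also on the pure vectors $\delta(a)$, which one cannot dodge since they occur among the generators. Everything else is bookkeeping with the product rule and the Tomita-bimodule axioms.
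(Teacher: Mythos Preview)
Your argument is correct and follows the same route as the paper. The paper streamlines one step: since the remark preceding the theorem already observes that the product rule makes $\operatorname{lin}\{R(b)\delta(a)\mid b\in\tilde\AA_\E,\,a\in\AA_\E\}=\H$, it defines $V$ only on these one-sided generators and invokes \Cref{lem:inner_prod_Tomita_bimodule} directly, so no separate expansion through $L(a)R(b)\delta(c)$ is needed; your ``boundary identity'' is just the $b=1$ instance of that lemma and, as you say, its proof is the same computation.
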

\begin{proof}
Since $\H_1$ and $\H_2$ are generated by $\delta_1$ and $\delta_2$, respectively, and $V$ is a bimodule map, it is uniquely determined by the requirement $V\circ \delta_1=\delta_2$, if it exists.

For the existence, define $V(R_1(b)\delta_1(a))=R_2(b)\delta_2(a))$ for $a\in\AA_\E$, $b\in\tilde\AA_\E$. By \Cref{lem:inner_prod_Tomita_bimodule} this map is well-defined and isometric. It follows from the product rule that $V$ is a bimodule map and $V$ is surjective because $\H_2$ is generated by $\delta_2$. That $V$ intertwines the conjugation operators $\J_1$, $\J_2$ and the group $(\U_z^{(1)})$, $(\U_z^{(2)})$ is  immediate from the definition of a symmetric derivation.
\end{proof}

If $\phi$ is finite, we have seen in \Cref{thm:from_deriv_to_QMS} that every nearly inner symmetric derivation gives rise to a GNS-symmetric quantum Dirichlet form. Conversely, every GNS-symmetric quantum Dirichlet form induces a nearly inner symmetric derivation by \Cref{thm:from_QMS_to_deriv}. In general, this correspondence is not one-to-one, as different derivations on Tomita subalgebras of $\AA_\phi$ can have the same closure. Among these derivations, the one constructed in \Cref{thm:from_QMS_to_deriv} has the maximal domain in the following sense:

We say a symmetric derivation $\delta\colon \AA\to\H$ is \emph{maximally defined} if it is closable and
\begin{equation*}
\AA=\{a\in\AA_\phi\mid \Delta_\phi^n a\in D(\bar\delta)\text{ for all }n\in\IZ\}.
\end{equation*}

The following result is then immediate from the discussion above.
\begin{corollary}\label{cor:bijection_deriv_QMS}
Let $\M$ be a von Neumann algebra and $\phi$ a normal faithful state on $\M$. There is a one-to-one correspondence between maximally defined nearly inner symmetric derivations on Tomita subalgebras of $\AA_\phi$ and GNS-symmetric quantum Dirichlet forms on $L_2(\M,\phi)$.
\end{corollary}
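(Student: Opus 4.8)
The plan is to build the bijection by hand. In one direction send a maximally defined nearly inner symmetric derivation $\delta\colon\AA\to\H$ to the closure $\E$ of the quadratic form $a\mapsto\norm{\delta(a)}_\H^2$; in the other direction send a GNS-symmetric quantum Dirichlet form $\E$ to the derivation $\delta\colon\AA_\E\to\H$ of \Cref{thm:from_QMS_to_deriv}. One then checks that these assignments are inverse to each other, the derivation-to-form-to-derivation round trip only up to the obvious isomorphism of derivations (a structure-preserving isometric bimodule map intertwining $\delta$), since the codomain of a derivation can always be enlarged artificially.

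First I would verify well-definedness of both assignments. For the derivation-to-form direction, \Cref{thm:from_deriv_to_QMS} already gives that $\E$ is a GNS-symmetric completely Dirichlet form; the remaining point, that it is a \emph{quantum} Dirichlet form, is where finiteness of $\phi$ enters. Here I would argue that each approximating inner derivation $\delta_n$ induces a quantum Dirichlet form by \Cref{prop:from_bdd_deriv_to_QMS}, so the corresponding semigroup fixes $\phi^{1/2}$, equivalently $\E_n(\phi^{1/2})=0$; since $\norm{\delta_n(\cdot)}^2\nearrow\norm{\delta(\cdot)}^2$ is a monotone sequence of closed forms, the associated semigroups converge strongly and the common fixed vector $\phi^{1/2}$ survives in the limit, so $\E(\phi^{1/2})=0$ and its semigroup is conservative. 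For the form-to-derivation direction, \Cref{thm:from_QMS_to_deriv} already produces a nearly inner symmetric derivation when $\phi$ is finite, its codomain is generated by $\delta$ by construction, and its closure has domain $D(\E)$ because $\AA_\E$ is a core for $\E$ by \Cref{thm:Tomita_algebra_QMS}.

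The technical core of the argument is the identity
\[
\AA_\E=\{a\in\AA_\phi\mid\Delta_\phi^n a\in D(\E)\text{ for all }n\in\IZ\},
\]
which says that the $L_2$-analyticity of $z\mapsto U_z a$ required in the definition of $\AA_\E$ can be tested on the integer shifts and then automatically upgrades to analyticity in the form norm. I would deduce it from \Cref{lem:res_generator_group}, applied with $A=(\L^{(2)})^{1/2}$ and $B=\Delta_\phi$: an element of the right-hand side lies in $\bigcap_{n\in\IZ}D(\tilde B^n)$, so by \cite[Lemma VI.2.3]{Tak03} the map $t\mapsto\tilde B^{it}a=\Delta_\phi^{it}a$ has an entire continuation in $(D(\E),\norm{\cdot}_\E)$, whence $U_z a\in D(\E)$ for all $z$ and $a\in\AA_\E$; the reverse inclusion is immediate from \cite[Theorem VI.2.2]{Tak03}. (This is essentially \Cref{lem:analytic_energy} used in both directions.) Granting the identity, the derivation of \Cref{thm:from_QMS_to_deriv} is maximally defined, since its closure has domain $D(\E)$ and the defining set for maximal-definedness then becomes exactly the right-hand side above; conversely, any maximally defined nearly inner $\delta\colon\AA\to\H$ has $\AA$ a core for the form $\E$ it induces, so $D(\bar\delta)=D(\E)$ and hence $\AA=\AA_\E$.

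It remains to check that the two assignments compose to the identity. One direction is immediate: the derivation from \Cref{thm:from_QMS_to_deriv} satisfies $\norm{\delta(a)}^2=\E(a)$ on the core $\AA_\E$, so its induced form closes back to $\E$. For the other composition, after replacing $\H$ by the sub-bimodule generated by $\delta(\AA)$ --- which changes neither the induced form nor maximal-definedness --- I would invoke \Cref{thm:deriv_unique}: both $\delta$ and the canonical derivation attached to $\E$ are symmetric derivations on $\AA_\E$ into bimodules generated by them and with squared norm equal to $\E$, hence are related by a unique structure-preserving isometric bimodule isomorphism. I expect this last point to be the main obstacle, since it forces one to pin down the equivalence relation under which the statement is literally a bijection and to arrange the generation hypothesis of \Cref{thm:deriv_unique}; the conservativeness verification in the second paragraph is the other genuinely substantive (though routine) step.
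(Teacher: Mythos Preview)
Your proposal is correct and follows essentially the same route as the paper. The paper treats the corollary as ``immediate from the discussion above'', meaning precisely the two assignments you describe via \Cref{thm:from_deriv_to_QMS} and \Cref{thm:from_QMS_to_deriv}, together with the definition of maximal-definedness; your argument that $\AA_\E=\{a\in\AA_\phi\mid\Delta_\phi^n a\in D(\E)\text{ for all }n\in\IZ\}$ via \Cref{lem:res_generator_group} and \cite[Lemma VI.2.3]{Tak03} is exactly the mechanism behind \Cref{lem:analytic_energy}.

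Two points where you are more careful than the paper: First, you explicitly verify conservativeness in the derivation-to-form direction. The paper's sentence ``we have seen in \Cref{thm:from_deriv_to_QMS} that every nearly inner symmetric derivation gives rise to a GNS-symmetric quantum Dirichlet form'' overstates what that theorem proves (it only gives a \emph{completely} Dirichlet form), and the remark following it handles only the unital case. Your argument---that each bounded $\E_n$ is a quantum Dirichlet form by \Cref{prop:from_bdd_deriv_to_QMS}, hence $\E_n(\phi^{1/2})=0$, and then $\E(\phi^{1/2})=\lim_n\E_n(\phi^{1/2})=0$ directly from the monotone convergence in the definition of nearly inner---is the cleanest way to close this gap. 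Second, you make explicit that the bijection is up to the isomorphism of \Cref{thm:deriv_unique} after passing to the sub-bimodule generated by $\delta$; the paper does not spell this out.
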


\section{\texorpdfstring{Carré du champ and $\Gamma$-regularity}{Carré du champ and Γ-regularity}}\label{sec:carre_du_champ}

Let $\AA$ be a Tomita algebra and $\H$ a Tomita bimodule over $\AA$. Recall that $\H$ is called a normal Tomita bimodule if the map $\pi_l(a)\mapsto L(a)$ extends to a normal representation of $\pi_l(\AA)^{\prime\prime}$. Even if $(P_t)$ is a symmetric QMS on a commutative von Neumann algebra, the Tomita bimodule from \Cref{thm:from_QMS_to_deriv} is in general not normal. For example this is a typical feature of Dirichlet forms on fractals if the reference measure is taken to be the Hausdorff measure (see \cite{Kus89} for example).

In [Wir20] we gave a characterization of quantum Dirichlet forms for which the associated Tomita bimodule is normal in the tracially symmetric case in terms of the so-called carré du champ. In this section we show that this characterization extends to GNS-symmetric quantum Dirichlet forms.

Let $\M$ be a von Neumann algebra, $\phi$ a normal semi-finite faithful weight on $\M$ and $(P_t)$ a $\phi$-symmetric QMS on $\M$ with associated quantum Dirichlet form $\E$. We use the notation from \Cref{thm:from_QMS_to_deriv} for the associated Tomita algebra and Tomita bimodule.

Let $A$ be the norm closure of $J\pi_r(\AA_\E)J$, which is strongly dense in $\M$. For $\xi,\eta\in\overline\H$ the functional
\begin{equation*}
J\pi_r(\AA_\E)J\to \IC,\,J\pi_r(a)^\ast J\mapsto \langle \xi,R(a)\eta\rangle
\end{equation*}
extends to a bounded linear functional $\hat\Gamma(\xi,\eta)$ on $A$ with $\norm{\hat\Gamma(\xi,\eta)}\leq\norm{\xi}\norm{\eta}$. If $a,b\in D(\E)$, we write $\Gamma(a,b)$ for $\hat\Gamma(\bar\delta(a),\bar\delta(b))$. Furthermore, we use the usual convention $\hat\Gamma(\xi)=\hat\Gamma(\xi,\xi)$ and the same for $\Gamma$.

\begin{remark}\label{rmk:carre_du_champ}
If $a,b,c\in \AA_\E$, then 
\begin{equation*}
\langle\Gamma(a,b),J\pi_r(c)^\ast J\rangle_{A^\ast,A}=\frac 1 2(\E(a,bc)+\E(ac^\flat,b)-\E(c^\flat,a^\sharp b))
\end{equation*}
by \Cref{lem:inner_prod_Tomita_bimodule}.
\end{remark}

\begin{theorem}\label{thm:char_normal_bimodule}
For a quantum Dirichlet form $\E$ with associated Tomita algebra $\AA_\E$, Tomita bimodule $\H$ and derivation $\delta$, the following assertions are equivalent:
\begin{enumerate}[(i)]
\item $\H$ is a normal Tomita bimodule.
\item The map $J\pi_r(\AA_\E)J\to B(\overline\H),\,J\pi_r(a)^\ast J\mapsto R(a)$ extends to a normal representation of $\M^{\op}$ on $\overline\H$.
\item For all $\xi,\eta\in \overline\H$ the map $\hat\Gamma(\xi,\eta)$ is $\sigma$-weakly continuous.
\item For every $a\in \AA_\E$ the map $\Gamma(a)$ is $\sigma$-weakly continuous.
\end{enumerate}
\end{theorem}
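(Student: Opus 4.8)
\emph{Proof strategy.} I would establish the equivalences by the chain $(i)\Leftrightarrow(ii)$, $(ii)\Rightarrow(iii)\Rightarrow(iv)$, $(iv)\Rightarrow(iii)$ and $(iii)\Rightarrow(ii)$. Throughout one identifies $\pi_\phi(\M)$ with $\M$ and uses that $\pi_l(\AA_\E)''=\M$ — hence $\pi_r(\AA_\E)''=\M'$ — which follows from the last density assertion of \Cref{thm:Tomita_algebra_QMS}; in particular $A=\overline{J\pi_r(\AA_\E)J}$ is $\sigma$-weakly dense in $\M$, and $x\mapsto JxJ$ is a normal $\ast$-isomorphism of $\M'$ onto $\M$. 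For $(i)\Leftrightarrow(ii)$ the point is that the conjugation $\J$ exchanges left and right: axiom (b) of \Cref{def:Tomita_bimodule} reads $\J L(a)\J=R(Ja)$, while a standard Tomita--Takesaki computation on the Tomita algebra $\AA_\E$ gives $\pi_l(Ja)=(J\pi_r(a)^\ast J)^\ast$ for $a\in\AA_\E$ (both sides equal $\sigma^\phi_{-i/2}(\pi_l(a)^\ast)$). Combining the two, $R(a)=\J\,\tilde L(J\pi_r(a)^\ast J)^\ast\,\J$ whenever $\tilde L$ is a normal extension of $\pi_l(\cdot)\mapsto L(\cdot)$; so under (i) the map $y\mapsto\J\tilde L(y)^\ast\J$ is a normal $\ast$-anti-representation of $\M$, i.e.\ a normal representation of $\M^\op$, extending $J\pi_r(a)^\ast J\mapsto R(a)$, which is (ii). Reading this identity backwards yields $(ii)\Rightarrow(i)$.

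For $(ii)\Rightarrow(iii)$: if $\rho$ is the normal representation from (ii), then for $\xi,\eta\in\overline\H$ the normal functional $x\mapsto\langle\xi,\rho(x)\eta\rangle$ on $\M$ restricts to $\hat\Gamma(\xi,\eta)$ on $J\pi_r(\AA_\E)J$ (hence on $A$ by norm density), so $\hat\Gamma(\xi,\eta)$ is $\sigma$-weakly continuous; and $(iii)\Rightarrow(iv)$ is the special case $\xi=\eta=\bar\delta(a)$. For the converse $(iii)\Rightarrow(ii)$: by axiom (a) the map $J\pi_r(a)^\ast J\mapsto R(a)$ is a contractive $\ast$-anti-representation of $J\pi_r(\AA_\E)J$ and hence extends to a representation $\rho_0$ of the $C^\ast$-algebra $A$ on $\overline\H$; (iii) says precisely that every matrix coefficient $\langle\rho_0(\cdot)\xi,\eta\rangle$ extends to a normal functional on $\M$, and the standard argument — define $\langle\tilde\rho(x)\xi,\eta\rangle$ to be the value at $x$ of this normal extension, bound it by $\lVert x\rVert\,\lVert\xi\rVert\,\lVert\eta\rVert$ (the norm of each matrix-coefficient functional on $A$, hence on $\M$, is at most $\lVert\xi\rVert\,\lVert\eta\rVert$), and verify multiplicativity and $\ast$-preservation from the $\sigma$-weak density of $A$ — shows that $\rho_0$ extends to a normal representation of $\M^\op$.

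The heart of the proof is $(iv)\Rightarrow(iii)$. By the construction in \Cref{thm:from_QMS_to_deriv}, $\overline\H$ is the closed linear span of the vectors $R(b)\delta(a)=a\otimes_\E b$ with $a,b\in\AA_\E$; since $\hat\Gamma$ is separately norm-continuous in each entry, since norm limits of normal functionals are normal, and since (by Kaplansky density) the norm of a normal functional on $\M$ coincides with its norm on $A$, it suffices to prove that $\hat\Gamma(R(b)\delta(a),R(d)\delta(c))$ is $\sigma$-weakly continuous for all $a,b,c,d\in\AA_\E$. Using that $R$ is a $\ast$-anti-representation of $(\AA_\E^\op,^\flat)$ and that $x\mapsto JxJ$ carries $\M'$ onto $\M$, one computes for $e\in\AA_\E$
\begin{equation*}
\hat\Gamma\bigl(R(b)\delta(a),R(d)\delta(c)\bigr)\bigl(J\pi_r(e)^\ast J\bigr)=\hat\Gamma\bigl(\delta(a),\delta(c)\bigr)\bigl(J\pi_r(d^\flat)J\cdot J\pi_r(e)^\ast J\cdot J\pi_r(b)J\bigr),
\end{equation*}
i.e.\ $\hat\Gamma(R(b)\delta(a),R(d)\delta(c))$ is obtained from $\Gamma(a,c)=\hat\Gamma(\delta(a),\delta(c))$ by left and right multiplication with the fixed elements $J\pi_r(d^\flat)J,\ J\pi_r(b)J\in\M$. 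As $\Gamma(a,c)$ is $\sigma$-weakly continuous — by (iv) and polarization, noting $\bar\delta(a)+i^k\bar\delta(c)=\bar\delta(a+i^kc)$ with $a+i^kc\in\AA_\E$ — and two-sided multiplication is $\sigma$-weakly continuous on $\M$, this gives (iii). The main obstacle is exactly this last step: upgrading the ``diagonal'' hypothesis that each $\Gamma(a)$ is $\sigma$-weakly continuous to the full statement for all $\hat\Gamma(\xi,\eta)$, which is possible only because the Tomita bimodule is generated by $\delta$ and the product rule controls how the right action meshes with the carré du champ.
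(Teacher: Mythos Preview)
Your proof is correct and follows essentially the same route as the paper: the same chain of implications, the same use of $\J$ to intertwine $L$ and $R$ for $(i)\Leftrightarrow(ii)$, the same polarization-plus-density argument for $(iv)\Rightarrow(iii)$ via the generating vectors $R(b)\delta(a)$, and the same construction of the normal extension from matrix coefficients for $(iii)\Rightarrow(ii)$. Your computation in $(iv)\Rightarrow(iii)$ is in fact cleaner than the paper's (which has a variable clash and writes $R(b^\flat e d)$ where $R(d e b^\flat)$ is meant), and you make the role of Kaplansky density for the norm control explicit where the paper leaves it implicit.
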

\begin{proof}
(i)$\iff$(ii): For $a\in \AA_\E$ let $\tilde L(\pi_l(a))=L(a)$ and $\tilde R(J\pi_r(a)^\ast J)=R(a)$. Since
\begin{equation*}
J\pi_r(Ja)^\ast J b=J(Jb\cdot (Ja)^\flat)=a^\flat b=\pi_l(a)^\ast b
\end{equation*}
for all $b\in \AA_\E$, we have $J \pi_r(\AA_\E)J=\pi_l(\AA_\E)$. Moreover,
\begin{equation*}
\J \tilde L(\pi_l(a))\xi=\J L(a)\xi=R(Ja)\J\xi=\tilde R(J\pi_r(Ja)^\ast J)\J\xi=\tilde R(\pi_l(a)^\ast)\J\xi
\end{equation*}
 for all $\xi\in\H$. Thus $\tilde R(x)=\J \tilde L(x)^\ast \J$ for all $x\in \pi_l(\AA_\E)$. From this it is easy to see that $\tilde L$ extends to a normal representation if and only if $\tilde R$ does.
 
(ii)$\implies$(iii), (iii)$\implies$(iv) are clear from the definitions.
 
(iv)$\implies$(iii): By polarization, $\Gamma(a,b)$ is $\sigma$-weakly continuous for all $a,b\in \AA_\E$. If $\xi=R(b)\delta(a)$, $\eta=R(d)\delta(c)$ with $a,b,c,d\in \AA_\E$, then
\begin{align*}
 \langle \hat\Gamma(\xi,\eta),J\pi_r(a)^\ast J\rangle_{A^\ast,A}&=\langle R(b)\delta(a),R(a)R(d)\delta(c)\rangle\\
 &=\langle \delta(a),R(b^\flat a d)\delta(c)\rangle\\
 &=\langle \Gamma(a,c),J\pi_r(b^\flat a d)^\ast J\rangle_{A^\ast,A}.
\end{align*}
Thus $\hat \Gamma(\xi,\eta)$ is $\sigma$-weakly continuous. By sesquilinearity, this extends to $\xi,\eta$ in $\H$. For arbitrary $\xi,\eta\in \overline\H$ take sequences $(\xi_k)$, $(\eta_k)$ in $\H$ such that $\xi_k\to\xi$, $\eta_k\to\eta$. Since
\begin{equation*}
\norm{\hat\Gamma(\xi,\eta)-\hat\Gamma(\xi_k,\eta_k)}\leq \norm{\xi-\xi_k}\norm{\eta_k}+\norm{\eta_k-\eta}\norm{\xi}
\end{equation*}
and the norm limit of $\sigma$-weakly continuous functionals is $\sigma$-weakly continuous, we obtain that $\hat\Gamma(\xi,\eta)$ is $\sigma$-weakly continuous.

(iii)$\implies$(ii): For $\xi,\eta\in \overline\H$ let $\omega_{\xi,\eta}$ denote the $\sigma$-weakly continuous extension of $\hat\Gamma(\xi,\eta)$ to $\M^\op$. By the Kaplansky density theorem, $\norm{\omega_{\xi,\eta}}=\norm{\hat\Gamma(\xi,\eta)}\leq\norm{\xi}\norm{\eta}$. Thus for every $x\in\M^\op$ there exists $\tilde R(x)\in B(\overline\H)$ with $\norm{\tilde R(x)}\leq\norm{x}$ such that $\omega_{\xi,\eta}(x)=\langle \xi,\tilde R(x)\eta\rangle$. Clearly $\tilde R$ is linear, weakly continuous on the unit ball and extends the map from (ii). Since the restriction of  $\tilde R$ on $J\pi_r(\AA_\E) J$ is a $\ast$-homomorphism (for the multiplication in $\M^\op$) and it is weakly continuous, it follows that $\tilde R$ is a $\ast$-homomorphism on $\M^\op$ by a density argument. As $R$ is non-degenerate, $\tilde R$ is unital. Thus $\tilde R$ is a normal representation of $\M^\op$ on $\overline\H$.
\end{proof}

\begin{remark}
According to \Cref{rmk:carre_du_champ}, if $a\in\AA_\E$, then $\Gamma(a)$ can be expressed in terms of the form $\E$ without reference to $\H$ and $\delta$. Thus (iv) in the previous theorem gives a criterion for the normality of $\H$ that can be checked without the need for an explicit construction of $\H$ and $\delta$.
\end{remark}

\begin{definition}
We say the quantum Dirichlet form $\E$ or the associated quantum Markov semigroup $(P_t)$ is \emph{$\Gamma$-regular} if one of the equivalent conditions from \Cref{thm:char_normal_bimodule} holds.
\end{definition}

\begin{remark}
$\Gamma$-regularity also has a probabilistic counterpart in terms of Markov dilations \cite{JRS}, but we will not go into details here.
\end{remark}

As a consequence of the construction from Subsection \ref{subsec:Fock_construction} we obtain that the generator of a $\Gamma$-regular GNS-symmetric quantum Dirichlet form is of the form $\delta^\ast\bar\delta$, where $\delta$ is a twisted derivation with values in a noncommutative $L_2$ space.

\begin{corollary}\label{cor:deriv_triple}
If $\E$ is a $\Gamma$-regular GNS-symmetric quantum Dirichlet form with generator $\L^{(2)}$, then there exists a von Neumann algebra $\hat \M\supset \M$, a weight $\hat \phi$ on $\hat \M$, a faithful normal conditional expectation $E\colon\hat \M\to\M$ such that $\phi\circ E=\hat\phi$, and a closable linear map $\delta\colon\AA_\E\to L_2(\hat\M,\hat\phi)$ such that
\begin{equation*}
\delta(\Lambda_\phi(xy))=x\delta(\Lambda_\phi(y))+\delta(\Lambda_\phi(x))\sigma^\phi_{i/2}(y)
\end{equation*}
for all $x,y\in \Lambda_\phi^{-1}(\AA_\E)$ and
\begin{equation*}
\L^{(2)}=\delta^\ast\bar\delta.
\end{equation*}
\end{corollary}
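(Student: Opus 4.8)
The plan is to combine \Cref{thm:from_QMS_to_deriv} with the Fock space construction of \Cref{subsec:Fock_construction}, using $\Gamma$-regularity to turn the intermediate Tomita bimodule into a genuine correspondence. First, apply \Cref{thm:from_QMS_to_deriv} to the $\Gamma$-regular GNS-symmetric quantum Dirichlet form $\E$: this produces a Tomita bimodule $\H$ over $\AA_\E$ and a symmetric derivation $\delta_0\colon\AA_\E\to\H$ with $\langle\delta_0(a),\delta_0(b)\rangle_\H=\E(a,b)$ for all $a,b\in\AA_\E$. Since $\E$ is closed and $\norm{\delta_0(a)}^2=\E(a)$, the map $\delta_0$ is closable, and because $\AA_\E$ is a core for $\E$ by \Cref{thm:Tomita_algebra_QMS}, the closed quadratic form $\xi\mapsto\norm{\bar\delta_0\xi}^2$ coincides with $\E$; hence $\delta_0^\ast\bar\delta_0=\L^{(2)}$.

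Because $\E$ is $\Gamma$-regular, \Cref{thm:char_normal_bimodule} shows that $\H$ is a normal Tomita bimodule, so $L$ and $R$ extend to normal representations $\tilde L$ of $\M$ and $\tilde R$ of $\M^\op$ on the completion $\overline\H$. Together with the extensions of $\J$ and $(\U_t)$ this makes $\overline\H$ a Tomita $\M$-$\M$-correspondence; the correspondence axioms follow from axioms (b), (e), (f) of \Cref{def:Tomita_bimodule} by normality and the density of $\AA_\E$. Now apply the Fock space construction of \Cref{subsec:Fock_construction} to $\overline\H$: put $\hat\M=\Gamma_\M(\overline\H)$, let $E\colon\hat\M\to\M$, $x\mapsto I^\ast xI$, and $\hat\phi=\phi\circ E$. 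By \Cref{thm:Fock_weight}, $\hat\phi$ is a normal semi-finite faithful weight, $E$ is a faithful normal conditional expectation with $\phi\circ E=\hat\phi$ by construction, and the semi-cyclic representation of $\hat\phi$ is carried by $\F_\M(\overline\H)$, so that $L_2(\hat\M,\hat\phi)$ is identified with $\F_\M(\overline\H)$. The left action $\pi_l$ embeds $\M$ into $\hat\M$, and $E\circ\pi_l=\id_\M$, so $\hat\phi$ extends $\phi$.

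Define $\delta\colon\AA_\E\to L_2(\hat\M,\hat\phi)$ to be $\delta_0$ followed by the isometric inclusion of $\overline\H$ as the degree-one part of $\F_\M(\overline\H)$. Then $\delta$ is closable, and since the complementary summands of $\F_\M(\overline\H)$ are orthogonal to the range of $\delta$, adjoints computed in $L_2(\hat\M,\hat\phi)$ agree with those computed in $\overline\H$, so $\delta^\ast\bar\delta=\delta_0^\ast\bar\delta_0=\L^{(2)}$. For the twisted product rule, observe that $\Lambda_\phi(x)\Lambda_\phi(y)=\Lambda_\phi(xy)$ in $\AA_\E$, so the product rule for the symmetric derivation $\delta_0$ gives $\delta_0(\Lambda_\phi(xy))=L(\Lambda_\phi(x))\delta_0(\Lambda_\phi(y))+R(\Lambda_\phi(y))\delta_0(\Lambda_\phi(x))$. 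On the degree-one part of $\F_\M(\overline\H)$ the left $\hat\M$-action restricted to $\pi_l(\M)$ is $\tilde L$, so the first term equals $x\,\delta(\Lambda_\phi(y))$; and $R(\Lambda_\phi(y))\xi=\xi\cdot(J\pi_r(\Lambda_\phi(y))^\ast J)$ by \Cref{prop:bounded_Tomita_bimodule}, while $J\pi_r(\Lambda_\phi(y))^\ast J=\pi_\phi(\sigma^\phi_{i/2}(y))$ for analytic $y$, so the second term equals $\delta(\Lambda_\phi(x))\,\sigma^\phi_{i/2}(y)$. This is the asserted identity.

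The main obstacle is not a single hard step---the substance is all in \Cref{thm:from_QMS_to_deriv}, \Cref{thm:char_normal_bimodule} and \Cref{thm:Fock_weight}---but the bookkeeping required to align the three layers of structure: the Tomita bimodule $\H$ over $\AA_\E$, the Tomita $\M$-$\M$-correspondence $\overline\H$, and the weighted von Neumann algebra $(\hat\M,\hat\phi)$. In particular, one has to verify that the left and right $\hat\M$-module structures, the modular conjugation $J_{\hat\phi}$, and the modular group $\sigma^{\hat\phi}$ restrict on the degree-one part of $L_2(\hat\M,\hat\phi)=\F_\M(\overline\H)$ to the data of $\overline\H$ (using that $E$ preserves $\hat\phi$, so that $\sigma^{\hat\phi}$ leaves $\M$ globally invariant and restricts there to $\sigma^\phi$), and that the $\sigma^\phi_{i/2}$-twist occurring in the product rule is exactly the twist relating the Tomita-bimodule right action $R$ to the correspondence right action on $\overline\H$.
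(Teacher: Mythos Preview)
Your proposal is correct and follows exactly the route the paper intends: the corollary is presented in the paper as a direct consequence of \Cref{thm:from_QMS_to_deriv}, \Cref{thm:char_normal_bimodule}, and the Fock space construction of Subsection~\ref{subsec:Fock_construction}, without a written-out proof, and you have supplied precisely those details. Your identification $J\pi_r(\Lambda_\phi(y))^\ast J=\pi_\phi(\sigma^\phi_{i/2}(y))$ for analytic $y$ and the bookkeeping relating the Tomita bimodule right action $R$ to the correspondence right action on the degree-one part of $\F_\M(\overline\H)$ are the only nontrivial verifications, and both are handled correctly.
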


\begin{remark}
The twisted product rule in the previous corollary can also be expressed in terms of the embeddings $\Delta_\phi^\alpha\Lambda_\phi$ for $\alpha\in [0,\frac 1 2]$ as follows:
\begin{equation*}
\delta(\Delta_\phi^\alpha\Lambda_\phi(xy))=\sigma^\phi_{-i\alpha}(x)\delta(\Delta_\phi^\alpha\Lambda_\phi(y))+\delta(\Delta_\phi^\alpha\Lambda_\phi(x))\sigma^\phi_{i(1/2-\alpha)}(y).
\end{equation*}
\end{remark}

\begin{remark}
In the case when $\phi$ is a trace, the existence of a derivation as in Subsection \ref{subsec:Fock_construction} for $\Gamma$-regular quantum Dirichlet forms was established in \cite{JRS}. The triple $(\AA_\E,\hat \M,\delta)$ is called a \emph{derivation triple} in this setting \cite{BGJ22,LJR20}.
\end{remark}

\section{Examples}\label{sec:examples}

In this final section we give examples of quantum Dirichlet forms for which we can describe the associated Tomita bimodule and derivation more explicitly. We start by discussing how tracially symmetric quantum Markov semigroups and quantum Markov semigroups on finite-dimensional von Neumann algebras fit into the picture in the first two subsections, before we treat new examples with depolarizing semigroups, the Ornstein--Uhlenbeck semigroup on free Araki--Woods factors and translation-invariant quantum Markov semigroups on compact quantum groups in the last three subsections.

\subsection{Tracially symmetric quantum Markov semigroups}\label{subsec:trace_sym_QMS}

Let $\M$ be a semi-finite von Neumann algebra and $\tau$ a normal semi-finite faithful trace on $\M$. In this case the modular group $\sigma^\tau$ is trivial and the involutions $^\sharp$ and $^\flat$ on $\AA_\tau$ coincide with $J_\tau$.

If $\E$ is a quantum Dirichlet form on $L_2(\M,\tau)$, then it is automatically GNS-symmetric, and $\AA_\E=D(\E)\cap \Lambda_\tau(\n_\tau)$. Let $\H$ be the associated Tomita bimodule. Since 
\begin{equation*}
\U_t(\delta(a)b)=\delta(\sigma^\tau_t(a))\sigma^\tau_t(b)=\delta(a)b
\end{equation*}
and elements of the form $\delta(a)b$ with $a,b\in\AA_\E$ linearly span $\H$, the group $(\U_t)$ is trivial.

In other words, we have a Hilbert bimodule $\overline\H$ over $\AA_\E$ with contractive and non-degenerate left and right action, an anti-unitary involution $\J$ on $\overline\H$ that intertwines the left and right action and a derivation $\delta\colon \AA_\E\to \overline \H$ such that $\J\circ\delta=\delta\circ J$ and
\begin{equation*}
\E(a,b)=\langle\delta(a),\delta(b)\rangle
\end{equation*}
for all $a,b\in\AA_\E$. This is the same data as one gets from the construction in \cite{CS03} for tracially symmetric quantum Markov semigroups.

\subsection{QMS on finite-dimensional von Neumann algebras}\label{subsec:fin_dim_QMS}

Let $\M\subset M_n(\IC)$ be a unital $^\ast$-subalgebra, necessarily weakly closed, and $h\in \M_+$ with $\tr(h)=1$. Write $\phi$ for the state induced by $h$, that is, $\phi(x)=\tr(xh)$. We assume that $\phi$ is faithful, or, equivalently, that $h$ is invertible.

By Alicki's theorem (see \cite[Theorem 3]{Ali76}, \cite[Theorem 3.1]{CM17} for the case of matrix algebras and \cite[Corollary 5.4]{Wir22} for the extension to arbitrary finite-dimensional von Neumann algebras) a continuous semigroup $(P_t)$ on $\M$ is a $\phi$-symmetric QMS if and only if there exist $\omega_1,\dots,\omega_m\in \IR$ and $v_1,\dots,v_m\in M_n(\IC)$ satisfying
\begin{itemize}
\item $\tr(v_j)=0$ for $1\leq j\leq m$,
\item $\tr(v_j^\ast v_k)=\delta_{jk}\tr(v_j^\ast v_j)$ for $1\leq j,k\leq m$,
\item $\{v_j\mid 1\leq j\leq m\}=\{v_j^\ast\mid 1\leq j\leq m\}$,
\item $h v_j h^{-1}=e^{-\omega_j}v_j$ for $1\leq j\leq m$
\end{itemize}
such that the generator $\L$ of $(P_t)$ is given by
\begin{equation*}
\L=\sum_{j=1}^m (e^{-\omega_j/2}v_j^\ast[v_j,\cdot\,]+e^{\omega_j/2}[\,\cdot\,,v_j]v_j^\ast).
\end{equation*}
Note that in general the matrices $v_j$ cannot be chosen in $\M$.

In this case, the differential structure from \Cref{thm:from_QMS_to_deriv} coincides with the differential structure studied in [CM17,Wir22]. Let us first fix some notation. The space $L_2(\M,\phi)$ can be identified with $\M$ with the inner product $\langle x,y\rangle_h=\tr(x^\ast y h)$. Clearly, this inner product can be extended to $M_n(\IC)$, and we write $H$ for the resulting Hilbert space.

With these identifications, the Tomita algebra $\AA_\phi$ is $L_2(\M,\phi)$ with the product from $\M$, the involution $a^\sharp=a^\ast$ and $U_z a=h^{iz}a h^{-iz}$. Since the quantum Dirichlet form $\E$ associated with $(P_t)$ is defined everywhere, we have $\AA_\E=\AA_\phi$.

Let $\H$ be the linear span of $\{([v_j,a]b)_j\mid a,b\in L_2(\M,\phi)\}$ inside $H^{\oplus m}$. If we endow $H^{\oplus m}$ with the componentwise left and right actions of $L_2(\M,\phi)$, then $\H$ is a sub-bimodule: If $a,b,c\in L_2(\M,\phi)$, then $([v_j,a]bc)_j\in \H$ and
\begin{equation*}
(a[v_j,b]c)_j=([v_j,ab]c-[v_j,a]bc)_j\in\H.
\end{equation*}
The space $\H$ becomes a Tomita bimodule over $L_2(\M,\phi)$ with
\begin{align*}
\U_z\xi&=(e^{i\omega_j z}h^{iz}\xi_j h^{-iz})_j,\\
\J([v_j,a]b)_j&=(h^{1/2}(b^\ast[v_{j^\ast},a]^\ast)h^{1/2})_j,
\end{align*}
where $j^\ast\in\{1,\dots,m\}$ is the unique index such that $v_j^\ast=v_{j^\ast}$.

The map $\delta\colon L_2(\M,\phi)\to\H$ given by $\delta(a)=(i e^{-\omega_j/4}[v_j,a])_j$ is a symmetric derivation and
\begin{align*}
\langle \delta(a),\delta(b)\rangle&=\sum_{j=1}^m \tr((i e^{-\omega_j/4}[v_j,a])^\ast (i e^{-\omega_j/4}[v_j,b])h)\\
&=\sum_{j=1}^m e^{-\omega_j/2}\tr(a^\ast v_j^\ast [v_j,b]h-a^\ast [v_j,b]\sigma v_j^\ast)\\
&=\sum_{j=1}^m \tr(e^{-\omega_j/2}a^\ast v_j^\ast[v_j,b]h-e^{\omega_j/2}a^\ast[v_j,b]v_j^\ast h)\\
&=\langle a,\L(b)\rangle.
\end{align*}

\subsection{Depolarizing semigroups}\label{subsec:depolarizing_QMS}

Let $\M$ be a von Neumann algebra and $\phi$ a normal faithful state on $\M$. The depolarizing semigroup (with respect to $\phi$) is given by $P_t(x)=e^{-t}x+(1-e^{-t})\phi(x)$.  It is evident that $(P_t)$ is a GNS-symmetric quantum Markov semigroup on $\M$. 

The associated Tomita algebra is simply $\AA_\phi$ and the associated Tomita bimodule is a subbimodule of $\AA_\phi\odot \AA_\phi$ with the usual inner product, $L=\pi_l\otimes \pi_l$, $R=\pi_r\otimes \pi_r$, $\U_z=U_z\otimes U_z$ and $\J(a\otimes b)=J_\phi b\otimes J_\phi a$. The associated derivation is then given by $\delta(a)=i(a\otimes\Omega_\phi-\Omega_\phi\otimes a)$, where $\Omega_\phi$ denotes the cyclic vector in $L_2(\M,\phi)$ representing $\phi$.

\subsection{Free Araki--Woods factors}\label{subsec:Araki-Woods}

Free Araki--Woods factors were introduced by Shlyakhtenko in the context of free probability \cite{Shl97}. We give a slightly modified, but equivalent definition here, which stresses the analogy to the definition of algebras $\Gamma_\M(\H)$ from Subsection \ref{subsec:Fock_construction} (see \cite{Vae05} for example). Let $H$ be a Hilbert space, $I\colon H\to H$ an anti-unitary involution and $(V_t)$ a strongly continuous unitary group on $H$ such that $V_t I=I V_t$ for all $t\in\IR$. Let $A$ be the unique positive self-adjoint non-singular operator on $H$ such that $V_t=A^{it}$ for all $t\in \IR$, and let $T=IA^{-1/2}$.

On the full Fock space
\begin{equation*}
\F(H)=\IC\oplus \bigoplus_{n=1}^\infty H^{\otimes n}
\end{equation*}
define the creation and annihilation operators $a(\xi)$ and $a^\ast(\xi)$ for $\xi\in H$ as in Subsection \ref{subsec:Fock_construction}. Let $s(\xi)=a(\xi)+a^\ast(\xi)$. The free Araki--Woods factor $\Gamma(H,(V_t))^{\prime\prime}$ is the von Neumann algebra generated by the operators $s(\xi)$ for $\xi\in D(T)$ with $T\xi=\xi$.

If we view $H$ as a Tomita $\IC$-$\IC$-correspondence with conjugation $I$ and unitary group $(V_{-t})$, then $\Gamma(H,(V_t))^{\prime\prime}$ is the same as the free Gaussian algebra $\Gamma_\IC(H)$ from Subsection \ref{subsec:Fock_construction}. The resulting von Neumann algebras can be viewed as operator-valued versions of Shlyakhtenko's free Araki--Woods factors \cite{Shl97}, which are a special case of our construction when $\AA=\IC$, so that they might be of independent interest in operator-valued free probability theory. In this article, the construction is used to extend the construction of derivation triples from \cite{JRS} to GNS-symmetric quantum Markov semigroups.

Write $\Omega$ for the vector $(1,0,\dots)\in\F(H)$ and $\phi$ for the associated vector state on $\Gamma(H,(V_t))^{\prime\prime}$. The state $\phi$ is called the free quasi-free state. The vector $\Omega$ is cyclic and separating for $\Gamma(H,(V_t))^{\prime\prime}$, which means that the GNS Hilbert space associated with $\phi$ is $\F(H)$ with the defining action of $\Gamma(H,(V_t))^{\prime\prime}$. The associated modular group acts as $\Delta_\phi^{it}|_{H^{\otimes n}}=(V_{-t})^{\otimes n}$ and the associated modular conjugation acts as $J_\phi|_{H^{\otimes n}}=I^{\otimes n}\tau_n$, where $\tau_n$ is the reversal map given by $\tau_n(\xi_1\otimes\dots\otimes\xi_n)=\xi_n\otimes\dots\otimes\xi_1$.

The number operator $N$ on $\F(H)$ is the positive self-adjoint operator on $\F(H)$ with domain 
\begin{equation*}
D(N)=\{\xi\in \F(H)\mid \sum_{n=0}^\infty n^2\norm{\xi_n}^2<\infty\}
\end{equation*}
that acts on $H^{\otimes n}$ as multiplication by $n$. It generates a symmetric QMS on $\F(H)$, which clearly commutes with $(\Delta_\phi^{it})$. Let $(P_t)$ denote the associated GNS-symmetric QMS on $\Gamma(H,(V_t))^{\prime\prime}$ and $\E$ the associated quantum Dirichlet form.

The QMS $(P_t)$ can equivalently be described as $\Gamma^{\prime\prime}(e^{-t}\mathrm{id}_H)$, where $\Gamma^{\prime\prime}$ is Shlyakhtenko's second quantization functor (see also \cite{Hia03} or \cite{BM21}, where this QMS is called the Ornstein--Uhlenbeck semigroup on $\Gamma(H,(V_t))^{\prime\prime}$).

Let $\AA=\operatorname{lin}\bigcup_n D(T^{\otimes n})$. Inductively one can show that every $\eta\in \AA$ there exists $x$ in the algebra generated by $\{s(\xi)\mid \xi\in D(T),\,T\xi=\xi\}$ such that $x\Omega=\eta$, and $x$ is unique since $\Omega$ is separating for $\Gamma(H,(V_t))$. This is the Wick word decomposition of $\eta$ and denoted by $x=W(\eta)$. In particular, $\AA$ is a Tomita subalgebra of $\AA_\E$ and the multiplication is given by $\xi\eta=W(\xi)W(\eta)\Omega$. Moreover, $\AA$ is a core for $\E$ by \Cref{lem:approx}. For this reason we can restrict our attention to $\AA$ instead of $\AA_\E$.

Let us first describe the Tomita bimodule $\H$. On $H\oplus H$ we have the anti-linear involution $I\oplus I$ and the strongly continuous unitary group $(V_t\oplus V_t)$, which commutes with $I\oplus I$. Thus we can construct $\Gamma(H\oplus H,(V_t\oplus V_t))^{\prime\prime}$ as before, and the GNS Hilbert space with respect to the free quasi-free state $\psi$ on $\Gamma(H\oplus H,(V_t\oplus V_t))^{\prime\prime}$ is canonically identified with $\F(H\oplus H)$. By a slight abuse of notation, we also write $\Omega$ for the vacuum vector in $\F(H\oplus H)$.

In particular, the group generated by the modular operator and modular conjugation operator act  as $\Delta_\psi^{it}|_{(H\oplus H)^{\otimes n}}=(V_{-t}\oplus V_{-t})^{\otimes n}$ and $J_\psi|_{(H\oplus H)^{\otimes n}}=(I\oplus I)^{\otimes n}\Sigma_n$, where $\tau_n$ is the reversal map as defined above. Let $\H=\operatorname{lin}\bigcup_n D((T\oplus T)^{\otimes n})$, which is contained in $\AA_\psi$ under these identifications.

As another application of the second quantization theorem, the map that sends $s(\xi)\mapsto s((\xi,0))$ for $\xi\in D(T)$ extends to a normal unital state-preserving $\ast$-homomorphism $\Phi$ from $\Gamma(H,(V_t))^{\prime\prime}$ to $\Gamma(H\oplus H,(V_t\oplus V_t))^{\prime\prime}$. The defining property of $\Phi$ implies that $\Phi(W(\xi))=W(\xi,0)$ for $\xi\in \AA$.

Define left and right actions of $\AA$ on $\F(H\oplus H)$ by
\begin{align*}
L(\xi)x\Omega&=\Phi(W(\xi))x\Omega\\
R(\xi)x\Omega&=x\Phi(W(\xi))\Omega
\end{align*}
for $\xi\in \AA$ and $x\in \Gamma(H\oplus H,(V_t\oplus V_t))^{\prime\prime}$.

Clearly, $L(\xi)$ and $R(\xi)$ map $\H$ into $\H$ for $x\Omega\in\AA$. Together with the restrictions of $(\Delta_\psi^{it})$ and $J_\psi$, this makes $\H$ into a Tomita bimodule over $\AA$.

Now let us describe the derivation $\delta$. Let $\Sigma\colon H\oplus H\to H\oplus H$ be the flip map, that is, $\Sigma(\xi,\eta)=(\eta,\xi)$. The family $(\alpha_t)_t=(\F(e^{it\Sigma})\,\cdot\,\F(e^{-it\Sigma}))_t$ is a weak$^\ast$ continuous group of $\ast$-automorphisms on $B(\F(H\oplus H))$, and by the second quantization lemma, it maps $\Gamma(H\oplus H,(V_t\oplus V_t))^{\prime\prime}$ into itself.

Let $D(\partial)$ be the set of all $x\in \Gamma(H\oplus H,(V_t\oplus V_t))^{\prime\prime}$ for which $\lim_{t\to 0}\frac 1 t(\alpha_t(x)-x)$ exists in the strong$^\ast$-topology. Since $(\alpha_t)$ consists of $\ast$-automorphisms, $D(\partial)$ is a $\ast$-subalgebra of $\Gamma(H\oplus H,(V_t\oplus V_t))^{\prime\prime}$ and the operator
\begin{equation*}
\partial\colon D(\partial)\to \Gamma(H\oplus H,(V_t\oplus V_t))^{\prime\prime},\,x\mapsto\frac 1 i\lim_{t\searrow 0}\frac 1 t(\alpha_t(x)-x)
\end{equation*}
is a $\ast$-derivation. Moreover, since $\Sigma$ commutes with $\Delta_\psi^{it}$, the derivation $\partial$ commutes with $\sigma^\psi$.

Let 
\begin{equation*}
\delta\colon \AA\to\H,\,\xi\mapsto \partial(\Phi(W(\xi)))\Omega=\partial(W(\xi,0))\Omega.
\end{equation*}
We have
\begin{align*}
\delta(\xi\eta)&=\partial(\Phi(W(\xi)W(\eta)))\Omega\\
&=\Phi(W(\xi))\partial(\Phi(W(\eta)))\Omega+\partial(\Phi(W(\xi)))\Phi(W(\eta))\Omega\\
&=L(\xi)\delta(\eta)+R(\eta)\delta(\xi).
\end{align*}
Since $\partial$ commutes with the modular group, we have
\begin{align*}
\delta(\Delta_\phi^{it}\xi)=\partial(W(\Delta_\phi^{it}\xi,0))\Omega=\partial(\sigma^\psi_t(W(\xi,0)))\Omega=\sigma^\psi_t(\partial(W(\xi,0)))\Omega=\Delta_\psi^{it}\delta(\xi).
\end{align*}
Similarly,
\begin{align*}
\delta(J_\phi\xi)=\partial(W(J_\phi\xi,0))\Omega=\partial(\sigma^\psi_{-i/2}(W(\xi,0))^\ast)\Omega=\sigma^\psi_{-i/2}(\partial(W(\xi,0)))^\ast\Omega=J_\psi\delta(\xi).
\end{align*}
Thus $\delta$ is a symmetric derivation.

For $\xi\in D(T)^{\odot n}$ we have
\begin{align*}
\delta(\xi)&=\frac 1 i\lim_{t\searrow 0}\frac 1 t (\F(e^{it\Sigma})W(\xi,0)\F(e^{-it\Sigma})\Omega-W(\xi,0)\Omega)\\
&=\frac 1 i\lim_{t\searrow 0}\frac 1 t((e^{it\Sigma})^{\otimes n}(\xi,0)-(\xi,0))\\
&=\sum_{k=1}^n(\mathrm{id}^{k-1}\otimes \Sigma\otimes\mathrm{id}^{n-k})(\xi,0).
\end{align*}
Thus, if $\xi\in D(T)^{\odot m}$ and $\eta\in D(T)^{\odot n}$, then
\begin{align*}
\langle \delta(\xi),\delta(\eta)\rangle_\H&=\delta_{m,n}\sum_{k=1}^n \langle(\mathrm{id}^{k-1}\otimes \Sigma\otimes\mathrm{id}^{n-k})(\xi,0),(\mathrm{id}^{k-1}\otimes \Sigma\otimes\mathrm{id}^{n-k})(\eta,0)\rangle\\
&=n\delta_{m,n}\langle \xi,\eta\rangle\\
&=\E(\xi,\eta).
\end{align*}

\subsection{Compact quantum groups}\label{subsec:CQG}
Our last example are translation-invariant quantum Markov semigroups on compact quantum groups. We refer to \cite{Wor98} as a reference for the general theory of compact quantum groups and to \cite{CFK14} for the result on quantum Markov semigroups on compact quantum groups we need. Note that some of the common notation for compact quantum groups conflicts with the notation used in Tomita--Takesaki theory. For example, $S$ and $\phi$ are used for different objects than in the rest of the article.

A compact quantum group is a pair $(A,\Delta)$ consisting of a unital $C^\ast$-algebra $A$ and a unital $\ast$-homomorphism $\Delta\colon A\to A\otimes_{\min} A$ such that 
\begin{equation*}
(\Delta\otimes\id)\Delta=(\id\otimes\Delta)\Delta
\end{equation*}
and the sets
\begin{equation*}
\operatorname{lin}(A\otimes 1)\Delta(A),\,\operatorname{lin}(1\otimes A)\Delta(A)
\end{equation*}
are dense in $A\otimes_{\min}A$.

We will use the sumless Sweedler notation $\Delta(a)=a_{(1)}\otimes a_{(2)}$ for the comultiplication.

On a compact quantum group $A$ there exists a unique state $h$, called the Haar state, such that
\begin{equation*}
(h\otimes \mathrm{id})\Delta(a)=(\mathrm{id}\otimes h)\Delta(a)=h(a)1
\end{equation*}
for all $a\in A$.

Let $(\pi_h,L_2(A,h),\Omega_h)$ be the cyclic representation associated with $h$, and let $\M=\pi_h(A)^{\prime\prime}$. We also denote the vector state $\langle \Omega_h,\cdot\,\Omega_h\rangle$ on $\M$ by $h$.

An $n$-dimensional unitary corepresentation of $(A,\Delta)$ is a unitary $u\in M_n(A)$ such that
\begin{equation*}
\Delta(u_{jk})=\sum_{l=1}^n u_{jl}\otimes u_{lk}.
\end{equation*}
The linear span of the matrix coefficients $u_{jk}$, where $u$ runs over all unitary corepresentations of $(A,\Delta)$, forms a dense unital $\ast$-subalgebra $\A$ of $A$.

A unitary corepresentation $u$ is called irreducible if $\{u\}^\prime\cap M_n(\IC)=\IC 1$. Two unitary corepresentations $u$ and $v$ are called equivalent if there exists a unitary matrix $U\in M_n(\IC)$ such that $v=U^\ast u U$. If $(u^{(\alpha)})_{\alpha\in I}$ is a complete set of representatives of the equivalence classes of unitary corepresentations of $(A,\Delta)$ and $n_\alpha$ is the dimension of the corepresentation $u^{(\alpha)}$, then $\{u^{(\alpha)}_{jk}\mid \alpha\in I,\,1\leq j,k\leq n_\alpha\}$ is a basis of $\A$.

With the counit $\epsilon\colon \A\to\IC$ and the antipode $S\colon \A\to\A$ given by
\begin{align*}
\epsilon(u^{(\alpha)}_{jk})&=\delta_{jk}\\
S(u^{(\alpha)}_{jk})&=(u^{(\alpha)}_{kj})^\ast,
\end{align*}
the $\ast$-algebra $\A$ becomes a Hopf $\ast$-algebra.

For $f\colon \A\to\IC$ linear and $a\in \A$ the convolution is defined by $f\ast a=(\mathrm{id}\otimes f)\Delta(a)$, or, in Sweedler notation, $f\ast a=f(a_{(2)})a_{(1)}$. Convolution by the counit is the identity map. Similarly, if $f,g\in\A^\ast$, then $f\ast g=(f\otimes g)\Delta$.

Let $\phi\in \A^\ast$ be a hermitian functional such that $\phi(1)=0$, $\phi(a^\ast a)\geq 0$ if $a\in\ker\epsilon$ and $\phi\circ S=\phi$ and define the associated convolution semigroup of states by 
\begin{equation*}
\phi_t=\epsilon+\sum_{k=1}^\infty\frac{t^k}{k!}\phi^{\ast k}.
\end{equation*}
This induces to a semigroup $(P_t)$ on $\A$ given by $P_t=(\id\otimes\phi_t)\Delta$. The semigroup $(P_t)$ extends to a GNS-symmetric quantum Markov semigroup on $\M$, which is translation-invariant, that is, $\Delta P_t=(\mathrm{id}\otimes P_t)\Delta$ for all $t\geq 0$. Conversely, any translation-invariant GNS-symmetric quantum Markov semigroup on $\M$ is of this form \cite[Theorem 3.4, Corollary 4.6]{CFK14}. Let $\E$ denote the associated GNS-symmetric quantum Dirichlet form on $L_2(A,h)$.

The generating functional $\phi$ gives rise to a \emph{Schürmann triple} $((\pi,H),\eta,\phi)$, where $\pi$ is a contractive unital $\ast$-representation of $\A$ on the pre-Hilbert space $H$ and $\eta\colon \A\to H$ is a surjective linear map such that
\begin{align*}
\eta(ab)&=\pi(a)\eta(b)+\epsilon(b)\eta(a)\\
\langle \eta(a),\eta(b)\rangle&=\phi((a-\epsilon(a))^\ast (b-\epsilon(b)))
\end{align*}
for all $a,b\in\A$.

The bimodule and derivation associated with the quantum Dirichlet form $\E$ have been essentially (without the group $(\U_z)$ and the conjugation $\J$) described in \cite[Section 8]{CFK14}. Since $\A\Omega_h$ is a Tomita subalgebra of $\AA_h$ by \cite[Theorem 1.4]{Wor98} and core for $\E$ by \Cref{lem:approx}, we can restrict our attention to $\A\Omega_h$.

Let 
\begin{align*}
\H&=\operatorname{lin}\{a_{(1)}b\Omega_h\otimes \eta(a_{(2)})\mid a,b\in \A\},\\
L(a\Omega_h)(b\Omega_h\otimes\eta(c))&=ab\Omega_h\otimes\pi(a)\eta(c),\\
R(a\Omega_h)(b\Omega_h\otimes \eta(c))&=ba\Omega_h\otimes\eta(c),\\
\delta(a\Omega_h)&=a_{(1)}\Omega_h\otimes\eta(a_{(2)})
\end{align*}
for $a,b,c\in\A$. Note that $\H=\operatorname{lin}\{R(b\Omega_h)\delta(a\Omega_h)\mid a,b\in\A\}$.

It was shown in \cite[Proposition 8.1]{CFK14} that $\delta(\xi\cdot\eta)=L(\xi)\delta(\eta)+R(\eta)\delta(\xi)$ and $\E(\xi,\zeta)=\langle \delta(\xi),\delta(\zeta)\rangle_\H$ for $\xi,\zeta\in\A\Omega_h$. Clearly, $R(\xi)$ maps $\H$ into itself, and the product ensures that the same is the case for $L(\xi)$. In other words,
\begin{equation*}
L(a\Omega_h)(b_{(1)}c\Omega_h\otimes\eta(b_{(2)}))=(ab)_{(1)}c\Omega_h\otimes\eta((ab)_{(2)})-a_{(1)}bc\Omega_h\otimes\eta(a_{(2)}).
\end{equation*}

The group $(\U_z)$ can be described as follows. Let $\sigma$ be the modular group associated with $h$. Define
\begin{equation*}
\U_z(a_{(1)}b\Omega_h\otimes \eta(a_{(2)}))=\sigma_z(a)_{(1)}\sigma_z(b)\otimes \eta(\sigma_z(a)_{(2)}).
\end{equation*}
Clearly $\U_z\delta(a\Omega_h)=\delta(\sigma_z(a)\Omega_h)$ and $\U_z R(a\Omega_h)=R(\sigma_z(a)\Omega_h)\U_z$, which also implies $\U_z L(a\Omega_h)=L(\sigma_z(a)\Omega_h)\U_z$ by the product rule. Moreover,
\begin{align*}
&\quad\;\langle \U_z(a_{(1)}b\Omega_h\otimes \eta(a_{(2)})),c_{(1)}d\Omega_h\otimes \eta(c_{(2)})\rangle\\
&=h(\sigma_z(b)^\ast \sigma_z(a)^\ast_{(1)}c_{(1)}d)\phi((\sigma_z(a)_{(2)}-\epsilon(\sigma_z(a)_{(2)}))^\ast(c_{(2)}-\epsilon(c_{(2)})))\\
&=h(\sigma_z(b)^\ast \sigma_z(a)^\ast_{(1)}c_{(1)}d)(\phi(\sigma_z(a)_{(2)}^\ast c_{(2)})-\overline{\epsilon(\sigma_z(a)_{(2)})}\phi(c_{(2)})-\epsilon(c_{(2)})\overline{\phi(\sigma_z(a)_{(2)})}),
\end{align*}
where we used $\phi(1)=0$ in the last step. Now let us analyze the summands separately. For the first summand we have
\begin{align*}
h(\sigma_z(b)^\ast \sigma_z(a)^\ast_{(1)}c_{(1)}\phi(\sigma_z(a)_{(2)}^\ast c_{(2)}) d)&=h(\sigma_z(b)^\ast \phi\ast(\sigma_z(a)^\ast c)d)\\
&=h(\sigma_{-\bar z}(\sigma_{\bar z}(b^\ast)\phi\ast(\sigma_{\bar z}(a^\ast)c)d)\\
&=h(b^\ast \phi\ast(a^\ast \sigma_{-\bar z}(c))\sigma_{-\bar z}(d))
\end{align*}
since the modular group commutes with convolution by $\phi$ by GNS-symmetry [compare CFK].
For the second and third summand,
\begin{align*}
h(\sigma_z(b)^\ast \overline{\epsilon(\sigma_z(a)_{(2)})}\sigma_z(a)^\ast_{(1)}\phi(c_{(2)}) c_{(1)}d)&=h(\sigma_{\bar z}(b^\ast)(\epsilon\ast\sigma_{\bar z}(a^\ast))(\phi\ast c)d)\\
&=h(b^\ast a^\ast(\phi\ast \sigma_{-\bar z}(c))\sigma_{-\bar z}(d))
\end{align*}
and
\begin{align*}
h(\sigma_z(b)^\ast \overline{\phi(\sigma_z(a)_{(2)})}\sigma_z(a)^\ast_{(1)}\epsilon(c_{(2)})c_{(1)}d)&=h(\sigma_{\bar z}(b^\ast)(\phi\ast\sigma_{\bar z}(a^\ast))(\epsilon\ast c)d)\\
&=h(b^\ast (\phi\ast a^\ast)\sigma_{-\bar z}(c)\sigma_{-\bar z}(d))
\end{align*}
by the same reasoning.

Reversing these computations, we get
\begin{align*}
&\langle \U_z(a_{(1)}b\Omega_h\otimes \eta(a_{(2)})),c_{(1)}d\Omega_h\otimes \eta(c_{(2)})\rangle\\
&\qquad=\langle a_{(1)}b\Omega_h\otimes \eta(a_{(2)}),\U_{-\bar z}(c_{(1)}d\Omega_h\otimes \eta(c_{(2)}))\rangle.
\end{align*}
As for the conjugation operator $\J$, let
\begin{equation*}
\J(a_{(1)}b\Omega_h\otimes\eta(a_{(2)}))=\sigma_{i/2}(b)^\ast\sigma_{i/2}(a)^\ast_{(1)}\otimes \pi(\sigma_{i/2}(b)^\ast)\eta(\sigma_{i/2}(a)^\ast_{(2)}).
\end{equation*}
It is not hard to see that $\J R(a\Omega_h)=L(\sigma_{i/2}(a)^\ast\Omega_h)\J$, $\J\delta(a\Omega_h)=\delta(\sigma_{i/2}(a)^\ast\Omega_h)$ and $\U_z \J=\J\U_{\bar z}$.

Furthermore,
\begin{align*}
\J^2(a_{(1)}b\Omega_h\otimes\eta(a_{(2)}))&=\J(\sigma_{i/2}(b)^\ast\sigma_{i/2}(a)^\ast_{(1)}\otimes \pi(\sigma_{i/2}(b)^\ast)\eta(\sigma_{i/2}(a)^\ast_{(2)}))\\
&=\J(L(\sigma_{i/2}(b)^\ast \Omega_h)\delta(\sigma_{i/2}(a)^\ast\Omega_h))\\
&=\J(\delta(\sigma_{i/2}(ab)^\ast\Omega_h)-R(\sigma_{i/2}(a)^\ast\Omega_h)\delta(\sigma_{i/2}(b)^\ast\Omega_h))\\
&=\delta(ab\Omega_h)-L(a\Omega_h)\delta(b\Omega_h)\\
&=R(b\Omega_h)\delta(a\Omega_h)\\
&=a_{(1)}b\Omega_h\otimes\eta(a_{(2)}),
\end{align*}
so that $\J$ is an involution, and
\begin{align*}
&\langle \J(a_{(1)}b\Omega_h\otimes\eta(a_{(2)})),c_{(1)}d\Omega_h\otimes\eta(c_{(2)})\rangle\\
&\qquad=h(\sigma_{i/2}(a)\sigma_{i/2}(b)c_{(1)}d)\phi((\sigma_{i/2}(ab)_{(2)}-\epsilon(\sigma_{i/2}(a)_{(2)})\sigma_{i/2}(b))(c_{(2)}-\epsilon(c_{(2)}))).
\end{align*}
Considering all three non-zero summands separately as in the case of $\U_z$, one obtains 
\begin{align*}
&\langle \J(a_{(1)}b\Omega_h\otimes\eta(a_{(2)})),c_{(1)}d\Omega_h\otimes\eta(c_{(2)})\rangle\\
&\qquad=\langle a_{(1)}b\Omega_h\otimes\eta(a_{(2)}), \J(c_{(1)}d\Omega_h\otimes\eta(c_{(2)}))\rangle.
\end{align*}
Thus $\J$ and $(\U_z)$ make $\H$ into a Tomita bimodule over $\A\Omega_h$ and $\delta\colon\A\Omega_h\to\H$ is a symmetric derivation such that $D(\bar\delta)=D(\E)$ and $\langle \bar\delta(\xi),\bar\delta(\eta)\rangle=\E(\xi,\eta)$.

\DeclareFieldFormat[article]{citetitle}{#1}
\DeclareFieldFormat[article]{title}{#1} 

\printbibliography
\end{document}